%% file: Effective_IBC_DB_261029.tex
\documentclass[10pt,reqno]{amsart}
\usepackage{amssymb,mathtools,amsthm,mathrsfs}
\usepackage{color}
\usepackage[dvipsnames]{xcolor}

\usepackage{stmaryrd}
\usepackage{bm}
\usepackage[normalem]{ulem}
\usepackage{amscd}
\usepackage{tikz-cd}
\usepackage{graphicx}
\usepackage{epstopdf} %This is to transfer .eps figure to .pdf figure; please compile your article using PDFLaTex or PDFTeXify.
\usepackage[shortlabels]{enumitem}
\usepackage{booktabs,dcolumn}
\usepackage[tableposition=top]{caption}
\usepackage[misc]{ifsym}
\usepackage{comment}
\usepackage[superscript]{cite} % M3AS: superscript numbered citations
\usepackage[hidelinks]{hyperref}
\theoremstyle{plain}

\numberwithin{equation}{section}
\newtheorem{theorem}{Theorem}[section]
\newtheorem{proposition}[theorem]{Proposition}
\newtheorem{lemma}{Lemma}[section]

\theoremstyle{definition}

\newtheorem{definition}[theorem]{Definition}
\newtheorem{remark}{Remark}[section]

\def\R{\mathbb{R}}

\def\<{\langle} \def\>{\rangle}

\def\L{\mathcal{L}}

\newcommand{\Bv}{{\boldsymbol{v}}}

\newcommand{\Bn}{{\boldsymbol{n}}}

\newcommand{\Bu}{{\boldsymbol{u}}}

\newcommand{\Be}{{\boldsymbol{e}}}

\newcommand{\BU}{\boldsymbol{U}}

\newcommand{\Bw}{{\boldsymbol{w}}}

\newcommand{\Bx}{\boldsymbol{x}}

\def\dd{\mathop{}\!\mathrm{d}}
\def\ii{\mathop{}\!\mathrm{i}}

\title[Effective interface laws for Darcy--Boussinesq convection]{Effective interface laws for Darcy--Boussinesq convection across a vanishing low-permeability layer}

\author{Kaijian Sha}
\address{School of Mathematical Sciences, Eastern Institute of Technology, Ningbo, China}
\email{kjsha@eitech.edu.cn}

\author{Xiaoming Wang}
\address{School of Mathematical Sciences, Eastern Institute of Technology, Ningbo, China}
\email{wxm@eitech.edu.cn}

\subjclass{76M45, 76S05, 76R10, 35Q35, 35B41}
\keywords{Darcy--Boussinesq convection, layered porous media, thin-layer limit, effective interface law, finite-resistance transmission law, global attractor}

\begin{document}

%\linenumbers

\begin{abstract}
We study Darcy--Boussinesq convection in a layered porous medium containing a thin layer whose thickness $h_0$ and permeability $K_0$ vanish simultaneously. The competition between these two small parameters is measured by the effective hydraulic conductance $\alpha = \lim \frac{K_0}{h_0} \in [0,\infty]$.  We show that the collapsing layer is replaced by an effective interface whose transmission law depends qualitatively on this ratio.  When $\alpha = 0$, the limiting interface is impermeable; when $ 0 <\alpha<\infty$, the normal Darcy flux is proportional to the pressure jump, yielding a finite-resistance transmission law; and when $\alpha=\infty$, the pressure is continuous and the interface becomes hydraulically transparent.  We establish estimates uniform in the layer thickness and permeability, quantitative convergence of solutions on every finite time interval, and upper semicontinuity of the associated global attractors.  The analysis has to accommodate both a degenerating geometry and a loss of coercivity in the pressure equation.  In particular, we use a generalized Poincar\'e inequality for piecewise $H^1$ pressures, a pressure extension across the collapsing layer, and uniform thin-strip estimates.  The results provide a rigorous finite- and long-time validation of effective interface models for convection through thin low-permeability interphases.
\end{abstract}
\maketitle
%\tableofcontents

\section{Introduction}

Thin low-permeability layers occur naturally in heterogeneous porous formations and in engineered barriers, where they may strongly alter buoyancy-driven transport even when their physical thickness is small.  Examples include stratified geological formations relevant to groundwater flow and subsurface carbon storage, as well as layered porous structures used to regulate heat and mass transfer; see, for instance, \cite{nield2017convection,bickle2007modelling,huppert2014ar,hewitt2014jfm,hewitt2020jfm,hewitt2022jfm}.  From the viewpoint of continuum modeling, a basic question is whether such a thin layer must be resolved as a genuine subdomain or can be replaced by an effective transmission law.  This question becomes particularly delicate when the layer degenerates simultaneously in geometry and in a constitutive parameter: the layer may disappear as a set while retaining a nontrivial hydraulic effect in the macroscopic model.

Rigorous derivation of reduced models from thin or highly heterogeneous structures is a recurring theme in applied analysis.  Classical homogenization results for viscous flow in porous media, such as Allaire's analysis of Stokes flow in a connected periodic porous medium \cite{Allaire1989Stokes}, provide a rigorous route from microscale fluid equations to effective porous-flow laws.  For transport in degenerating geometries, Allaire, Pankratova, and Piatnitski \cite{AllairePankratovaPiatnitski2012} analyzed a nonstationary convection--diffusion equation in a thin rod and in a layer, illustrating how geometry, convection, and boundary layers interact in a singular limit.  Depending on the scaling, thin conducting coatings can generate Dirichlet, Neumann, Robin, or more general effective conditions; see, for example, \cite{ChenPondWang2012,AuvrayVial2018}.  Related asymptotic and homogenization theories for fluid and porous-media interfaces lead to jump, membrane, and transmission laws; see \cite{AngotGoyeauOchoa2017,GahnNeussRadu2025,MarusicPazin2026}.  
Within M3AS, closely related thin-domain and effective-interface models include the reinforcement problem for a Poisson equation with a thin layer \cite{LiZhang2011}, single-phase flow through a thin porous layer \cite{AmazianePankratovPiatnitski2007}, interface laws for coupled porous/elastic media \cite{MikelicWheeler2012}, and the recent derivation of a nonlocal Hele--Shaw--Cahn--Hilliard model from flow in thin heterogeneous layers \cite{CardoneJagerWoukeng2024}. These works illustrate a principle central to the present paper: an effective interface model should be viewed not merely as a formal simplification, but as a continuum model whose validity must be justified through estimates and convergence results that remain stable as the underlying small scales degenerate.

We study this issue for the nonlinear Darcy--Boussinesq system in a layered porous medium.  The Darcy--Boussinesq equations are a standard continuum description of buoyancy-driven convection in fluid-saturated porous media \cite{nield2017convection,doering1998jfm}.  More generally, nonlinear continuum models coupling fluid and heat transport in porous geophysical media can be organized around thermodynamic energy balances and corresponding a priori estimates; a representative example is Roub\'i\v{c}ek's model for heat and fluid flow in damageable poro-elastic continua \cite{Roubicek2017Geophysical}.  For the Darcy--Boussinesq system itself, the mathematical theory includes global well-posedness, regularity, dissipativity, and long-time dynamics in homogeneous and layered settings; see \cite{fabrie1986aam,fabrie1996aa,ly1999jns,oliver2000gevrey,CNW2025}.  For discontinuous material coefficients, Dong and Wang \cite{DW2025} rigorously justified a sharp material-interface model as the limit of a diffuse transition zone.  More recently, the authors considered two complementary one-parameter singular limits: vanishing thickness of a layer with nondegenerate material coefficients \cite{ShaWangThickness2026}, and vanishing permeability of a layer of fixed positive thickness \cite{ShaWangPermeability2026}.  Long-time convergence for narrow transition zones, including convergence of global attractors and statistical quantities, was studied in \cite{ShaWangWu2026}.  The simultaneous limit considered here is qualitatively different from either one-parameter problem: the geometry degenerates while the permeability tends to zero, and the competition between these two effects selects the limiting hydraulic transmission law.

More precisely, let $h_0=h_0(\varepsilon)$ and $K_0=K_0(\varepsilon)$ denote the thickness and permeability of the intermediate layer, respectively, and assume
\[
 h_0(\varepsilon)\to0,\qquad K_0(\varepsilon)\to0,
 \qquad \frac{K_0(\varepsilon)}{h_0(\varepsilon)}\to\alpha\in[0,\infty].
\]
The ratio $\frac{K_0}{h_0}$ is the effective hydraulic conductance of the disappearing layer.  In all three limiting regimes, conservation and the nondegeneracy of the diffusivity yield the common transmission conditions
\[
\llbracket u_z^0\rrbracket_\Gamma=0,\qquad
\llbracket \psi^0\rrbracket_\Gamma=0,\qquad
\llbracket D^0\partial_z\psi^0\rrbracket_\Gamma=0.
\]
The hydraulic law is selected by the limiting conductance:
\begin{equation}\label{intro:interface-phase-diagram}
\left\{\begin{aligned}
 &u_z^0= 0, && \text{ if }\alpha=0,\\
 &u_z^0= -\alpha\llbracket p^0\rrbracket_\Gamma, && \text{ if } 0<\alpha<\infty,\\
 &\llbracket p^0\rrbracket_\Gamma =0, && \text{ if } \alpha=\infty.
\end{aligned}\right.
\end{equation}
Thus $\alpha=0$ produces an impermeable interface, $0<\alpha<\infty$ produces an interface with finite hydraulic resistance, and $\alpha=\infty$ gives perfect hydraulic contact.  The two small parameters therefore do not simply perturb a prescribed interface condition: their relative scaling determines which macroscopic nonlinear continuum model is selected.

A principal objective of this work is to validate these effective continuum models at two distinct time scales.  On finite time intervals, one must show that solutions of the full three-layer problem converge quantitatively to solutions of the corresponding effective interface problem as the layer collapses.  For dissipative nonlinear systems, however, finite-time convergence alone does not ensure that the reduced model reproduces the asymptotic dynamics of the original system.  It is therefore equally important to obtain estimates that are uniform with respect to the small parameters in the dissipative regime and to pass to the limit at the level of long-time dynamics.  This finite-time/infinite-time distinction is central to our analysis.

Our main results reflect this viewpoint.  First, for every $\alpha\in[0,\infty]$, we formulate the corresponding nonlinear Darcy--Boussinesq interface problem and establish its well-posedness in a framework that permits pressure jumps when the hydraulic resistance is finite.  Second, we derive energy and higher-order estimates for the full thin-layer problem with constants independent of the vanishing thickness and permeability.  In particular, these estimates yield dissipative bounds and a common absorbing set that are uniform in $\varepsilon$.  Third, for sufficiently regular common initial data, we prove quantitative convergence of the scalar field, velocity, and outer pressure gradients on arbitrary finite time intervals.  Finally, the uniform dissipativity and compactness allow us to pass to the infinite-time limit and prove upper semicontinuity of the global attractors.  Consequently, the effective interface problems are justified not only as transient approximations but also as models for the long-time dynamics of the original nonlinear continuum system.

The simultaneous limit presents several analytical difficulties that are directly tied to this two-scale validity.  When $0<\alpha<\infty$, the limiting pressure is only piecewise $H^1$ and may jump across the interface, whereas the prelimit pressure is globally $H^1$.  We address this mismatch by constructing a pressure extension across the collapsing layer and by proving a generalized Poincar\'e inequality for piecewise Sobolev functions.  In the regime $\alpha=0$, the jump penalty disappears from the pressure energy and one must instead control the normal flux directly at the shrinking boundary.  At the opposite extreme $\alpha=\infty$, standard thin-domain interpolation estimates are not uniform; we overcome this by subtracting a reflected comparison field and using a thin-strip inequality with a thickness-independent constant.  These ingredients are used both in the quantitative singular-limit analysis and in establishing the uniform dissipative structure needed for the attractor convergence.

The paper is organized as follows.  Section~2 introduces the layered Darcy--Boussinesq continuum model and the homogenized variables.  Section~3 derives the three effective interface laws and establishes well-posedness of the limiting problems.  Section~4 develops estimates that are uniform in the layer thickness and permeability, with particular emphasis on the dissipative bounds required for long-time analysis.  Section~5 proves quantitative finite-time convergence and upper semicontinuity of the global attractors.  Section~6 summarizes the main conclusions and discusses geometric extensions and further singular limits.  The Appendix collects the thin-strip, weighted Sobolev, and generalized Poincar\'e inequalities used in the analysis.

\section{The sharp interface model}
For convection in two-dimensional layered domain $\Omega =  \mathbb{T}_L\times (-H,H)$, the governing equations are the following system \cite{nield2017convection},
\begin{equation}\label{Sharp}
  \left\{
    \begin{aligned}
&\Bu=-\frac{K}{\mu}\left(\nabla P + \rho_{0}(1+\alpha_T\phi)g\Be_{z}\right),\\
&\nabla \cdot \Bu =0,\\
&b\partial_t \phi  + \Bu\cdot\nabla\phi - \nabla \cdot(bD\nabla\phi)=0.
    \end{aligned}\right.
\end{equation}
Here $\Bu$, $\phi$, and $P$ are the unknown fluid velocity, concentration, and pressure, respectively; $\rho_{0}$, $\alpha_T$, $\mu$, and $g$ are the constant reference fluid density, constant thermal/solutal expansion coefficient, constant dynamic viscosity, and gravitational acceleration, respectively; and $\Be_{z}$ stands for the unit vector in the vertical ($z$) direction. In addition, $K$, $b$, $D$ represent the permeability, porosity, and diffusivity respectively.

For the sake of simplicity, we assume that the domain $\Omega$ is divided into three layers by two interfaces located at $z= \pm \tfrac{h_0}{2}$, where $0<h_0<\frac{H}{2}$ is thickness of the intermediate layer. For the idealized sharp interface model, the permeability, porosity, and diffusivity coefficients are assumed to be constant in each layer. Namely,
% \[
% K=K(\boldsymbol{x})=K_i, \quad b=b(\boldsymbol{x})=b_i, \quad D=D(\boldsymbol{x})=D_i, \quad \text{ for any }\Bx\in\Omega_i,
% \]
\begin{equation}\nonumber \label{def:KbD}
    (K,b,D)= \left\{
\begin{aligned}
&(K_+,b_+,D_+),\quad &&z\in \left(\tfrac{h_0}{2},H\right), \\
&(K_0,b_0,D_0),\quad &&z\in \left(-\tfrac{h_0}{2},\tfrac{h_0}{2}\right), \\
&(K_-,b_-,D_-),\quad &&z\in \left(-H, -\tfrac{h_0}{2}\right),
\end{aligned}\right.
    \end{equation}
    for a set of positive constants $\{K_{i},b_{i},D_{i}\}_{i=\pm,0}$. On the interfaces $z=\pm \tfrac{h_0}{2}$, we assume that 
    \begin{equation}\label{interface-1}
\Bu\cdot\Be_{z},\ \phi,\ P,\ \text{and }bD\partial_z\phi \text{ are continuous.} 
    \end{equation}
Furthermore, the system \eqref{Sharp} is supplemented with the initial condition
    \begin{equation}\label{initialdata}
\phi|_{t=0}=\phi_{\rm in}
    \end{equation}
    and the boundary conditions
    \begin{equation}\label{bc0}
\phi|_{z=\pm H}=c_{\pm H},~~\Bu\cdot \Be_z|_{z=\pm H}= 0,
    \end{equation}
where $c_{H},c_{-H}$ are two constants.

    % \begin{lemma}
    %  Assume that $\phi_0\in L^2(\Omega)$ and the porosity $b$ is a constant. The system \eqref{DB} subject to the boundary and interfacial conditions \eqref{interface-1}, \eqref{interface-2}, \eqref{bc} together with the periodic conditions
    %     in the horizontal direction(s), and the initial condition \eqref{initialdata} admits a global weak solution $(\Bu, \phi, P)$ satisfying
    %     \begin{equation}\label{eq2-1}
    %\phi\in L^2(0,T;H_0^1(\Omega))\cap L^\infty(0,T;L^2(\Omega)),~~~~\partial_t \phi \in L^2(0,T;H_0^1(\Omega)^*),
    %     \end{equation}
    %     and
    %     \begin{equation}\label{eq2-2}
    %     (b\phi(t_2),\psi) -  (b\phi(t_1),\psi) = \int_{t_1}^{t_2} (bD \nabla\phi,\nabla\psi) + (\Bu\cdot \nabla \phi ,\psi) \dd t
    %     \end{equation}
    %     for any $T>0$, $t_1,t_2 \in [0,T]$ and $\psi\in H_0^1(\Omega)$. Moreover, one has $\phi(0) =\phi_0$ in $L^2(\Omega)$ and $\Bu \in L^2(0,T;L^q(\Omega))$, $P\in L^2(0,T;W^{1,q}(\Omega))$ with with $q\in [2,\infty)$ if $d= 2$, and $q=6$ if $d=3$.
    % \end{lemma}
To overcome the nonhomogeneous boundary condition of $\phi$ in \eqref{bc0}, we introduce a smooth function $\phi_b(z;\delta)$ on $[-H,H]$ satisfying
\begin{equation}\label{phi_b1}
  \phi_b(z;\delta)= \left\{
    \begin{aligned}
& c_H, && z = H,\\
& \frac{c_H+c_{-H}}{2}&&z \in (-H+\delta,H-\delta),\\
& c_{-H}, && z=-H
    \end{aligned}\right.
  \end{equation}
  and
  \begin{equation}\label{phi_b2}
    |\phi_b'| \leq \frac{c_\Delta}{\delta}, ~~~~|\phi_b''| \leq \frac{2c_\Delta}{\delta^2},
  \end{equation}
  where $\delta>0$ is a parameter to be determined later and $c_\Delta := |c_H-c_{-H}|$.  We always choose $\delta\le \frac{H}{2}$; for sufficiently small $h_0$, the support of $\phi_b'$ is then contained in the two outer layers and is separated from the material interfaces.

Setting
\[ \psi =\phi -  \phi_b,\quad
p = P +\rho_0 g z + \alpha_T \rho_0 g \int_{-H}^z\phi_b(s;\delta)\dd s,\]
and assuming $\mu = 1$, $\rho_{0} \alpha_T g =1$, and $b_+=b_0=b_-=1$ for the remainder of this paper, we obtain the homogenized  Darcy-Boussinesq system
\begin{equation}\label{Sharps}
  \left\{
    \begin{aligned}
&\Bu=- K \left(\nabla p  + \psi \Be_{z}\right),\\
&\nabla \cdot \Bu =0,\\
& \partial_t \psi + \Bu\cdot\nabla \psi  +  \phi_b'\Bu \cdot \Be_z   - \nabla\cdot(D\nabla\psi)=D \phi_b'',
    \end{aligned}\right.
\end{equation}
which is supplemented with the interfacial boundary conditions
\begin{equation}\label{interface1}
  \Bu\cdot\Be_{z},\ \psi,\ p,\ \text{and }D\partial_z\psi\text{ are continuous at } z=\pm \tfrac{h_0}{2} ,
\end{equation}
% \begin{equation}\label{interface2}
% D\partial_z \psi  \text{ are continuous at } z=z_{j},\ 1\leqslant j\leqslant l-1,
% \end{equation}
the initial condition
\begin{equation}\label{initial}
  \psi|_{t=0}=\psi_{\rm in}:=\phi_{\rm in} -\phi_b 
\end{equation}
and homogeneous boundary condition
\begin{equation}\label{bc}
  \psi|_{z=\pm H}= \Bu\cdot \Be_z|_{z=\pm H}= 0.
\end{equation} 

To define the weak solution of the equation, we introduce the following notation. For $1\leq r \leq \infty$ and $k\in \R$, let $L^r (\Omega)$ and $H^k(\Omega)$ denote the usual Lebesgue space of integrable functions and Sobolev spaces on the domain $\Omega$. The inner product in $L^2(\Omega)$ will be denoted by $(\cdot,\cdot)$.
Let $H_0^1(\Omega)$ be the closure of $C^\infty_c( \Omega )$ in the $H^1$ norm. %We denote the dual space of $H_0^1(\Omega)$ by $H^{-1}(\Omega)$. 
We also denote %by $\boldsymbol{H}$ the closure of $\{\Bu \in C^\infty_c( \Omega ):~\nabla \cdot \Bu=0\}$ in the $L^2$ norm.
$$\boldsymbol{H}:=\overline{\{\Bu \in C^\infty_c( \Omega ):~\nabla \cdot \Bu=0\}}^{L^2}$$
Then the weak solution of the problem \eqref{Sharps}-\eqref{bc} is defined as follows.
\begin{definition}\label{def:weaksolution}
Let $\psi_{\rm in}\in L^2(\Omega)$ be given, and let $T>0$. A weak solution of the problem \eqref{Sharps}-\eqref{bc} on the interval $[0,T]$ is a triple $(\Bu, p,\psi)$ satisfying the following conditions:
\begin{enumerate}[(1)]
  \item $\psi \in L^\infty(0,T;L^2(\Omega)) \cap L^2(0,T;H_0^1(\Omega))$,
  \begin{equation}\label{weak-psi}
    (\psi(t_2), \varphi) - ( \psi(t_1), \varphi) + \int_{t_1}^{t_2}(\Bu\cdot\nabla \psi+ \phi_b'u_z,\varphi)  + (D\nabla\psi,\nabla\varphi)-(D \phi_b'',\varphi)\dd t=0,
  \end{equation}
  for any $t_1,t_2\in [0,T]$ and $\varphi\in H_0^1(\Omega)$, and $\psi(0)= \psi_{\rm in}$,
  \item $p \in L^\infty(0,T;H^1(\Omega))$ is a zero-mean weak solution to the problem
  \begin{equation}\label{pressure}
    \left\{
\begin{aligned}
&-\nabla \cdot(K\nabla p) = \nabla\cdot (K\psi \Be_{z}) , ~~~~&&\text{ in } \Omega,\\
&\partial_z p  = 0,&&\text{ on } z= \pm H,\\
\end{aligned}\right.
  \end{equation} 
  \item $\Bu \in L^\infty(0,T;\boldsymbol{H})$ satisfies \eqref{Sharps}$_1$.
\end{enumerate}
\end{definition}

The solution to the Neumann boundary value problem \eqref{pressure} is understood in the weak sense:
\begin{equation}\label{weakform-p}
\int_{\Omega}K(\nabla p+\psi \Be_z) \cdot\nabla q\dd \Bx=0,\quad \text{ for any }q\in H^1(\Omega).
\end{equation}
\begin{remark}
Once the piecewise constant permeability is positive in each layer, the existence of the problem \eqref{pressure} for any $\psi\in L^2(\Omega)$ is guaranteed by the Lax--Milgram theorem. Uniqueness can be obtained if we restrict to the mean-zero subspace of $H^1$. More specifically, if $\psi\in H_0^1(\Omega)$, then $p$ is piecewise $H^2$. 
\end{remark}

For higher regularity of the solutions to the sharp interface model \eqref{Sharps}, we recall the following space, which is introduced in \cite{CNW2025}.
\begin{definition}
Define
\begin{equation}\nonumber
  \mathcal{W} = \{\psi \in H_0^1(\Omega):~\partial_x \psi \in H^1(\Omega),~D\partial_z \psi \in H^1(\Omega)\}
\end{equation}
endowed with norm
\begin{equation}\nonumber
  \|\psi\|_\mathcal{W} = \|\psi\|_{H^1(\Omega)}+\|\partial_x \psi\|_{H^1(\Omega)}+\|D\partial_z \psi\|_{H^1(\Omega)}.
\end{equation}
\end{definition}

  Following \cite{CNW2025}, one can obtain the existence, uniqueness, and regularity of solutions to the sharp interface model \eqref{Sharps}-\eqref{bc} as follows.
  \begin{proposition}\label{prop1}
    % Assume that the  permeability $K$ and diffusivity $D$ are piecewise constant and positive in each layer.  
    For each initial data $\psi_{\rm in}\in L^2(\Omega)$, there exists a unique global weak solution $(\Bu, p,\psi)$ to the problem \eqref{Sharps}--\eqref{bc}. In particular, the map $(\psi_{\rm in},t)\mapsto \psi(t)$ is continuous with respect to $\psi_{\rm in}$ and $t$. Furthermore, if $\psi_{\rm in}\in H_0^1(\Omega)$, then we have
\begin{equation}
\psi \in L^\infty(0,T;H_0^1(\Omega)) \cap L^2(0,T;\mathcal{W}),~~~~~ \text{for any }T>0.
\end{equation}
  \end{proposition}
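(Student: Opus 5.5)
The plan is a Galerkin scheme for $\psi$ in which the velocity is recovered from $\psi$ through the linear, nonlocal map defined by the pressure problem \eqref{pressure}. Uniqueness and continuous dependence then follow from an $L^2$ energy estimate for the difference of two solutions. Higher regularity comes from a second energy estimate adapted to the space $\mathcal{W}$, as in \cite{CNW2025}.

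\textbf{Step 1: the velocity map.} For $\psi\in L^2(\Omega)$, Lax--Milgram on the mean-zero subspace of $H^1(\Omega)$ gives a unique weak solution $p$ of \eqref{weakform-p}. Testing with $q=p$ yields $\|\sqrt K\nabla p\|_{L^2}\le \|\sqrt K\psi\|_{L^2}$. Hence the velocity
\[
\Bu=-K(\nabla p+\psi\Be_z)\in\boldsymbol{H}
\]
satisfies $\|\Bu\|_{L^2}\le C\|\psi\|_{L^2}$ and depends linearly on $\psi$. Since $\nabla\cdot\Bu=0$ and $u_z=0$ at $z=\pm H$, we have $(\Bu\cdot\nabla\psi,\psi)=0$ for $\psi\in H_0^1$.

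\textbf{Step 2: Galerkin existence.} Project onto the eigenfunctions of $-\nabla\cdot(D\nabla\cdot)$ with Dirichlet conditions. Test the equation with $\psi$. The convective term drops, and
\[
|(\phi_b'u_z,\psi)|\le \|\phi_b'\|_\infty\|\Bu\|\|\psi\|\le C\|\psi\|^2 .
\]
The forcing $(D\phi_b'',\psi)$ is bounded directly. Gronwall then gives $\psi\in L^\infty(0,T;L^2)\cap L^2(0,T;H_0^1)$, uniformly in the Galerkin dimension.

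To bound the time derivative, estimate $\Bu\cdot\nabla\psi=\nabla\cdot(\Bu\psi)$ in the dual space. In 2D, Ladyzhenskaya gives $\|\psi\|_{L^4}^2\le C\|\psi\|\|\nabla\psi\|$, but $\Bu$ lies only in $L^2$. The better route is the $L^q$ regularity of the transmission problem (noted in the Remark: $p$ is piecewise $H^2$ when $\psi\in H_0^1$), which gives $\Bu\in L^2(0,T;H^1)$ piecewise, hence $\Bu\in L^2(0,T;L^4)$. With this, $\partial_t\psi$ is bounded in $L^1(0,T;H^{-1})$, which suffices. Aubin--Lions compactness then lets us pass to the limit in the nonlinear term, since $\Bu$ depends linearly and continuously on $\psi$.

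\textbf{Step 3: uniqueness and continuity.} Let $w=\psi_1-\psi_2$ and $\Bv=\Bu_1-\Bu_2$. Test the difference equation with $w$; the one cross term is
\[
(\Bv\cdot\nabla\psi_2,w)=-(\Bv\cdot\nabla w,\psi_2).
\]
With only $\|\Bv\|_{L^2}\le C\|w\|$, this cannot be closed. I therefore expect this estimate to be the main obstacle.

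The fix I would try first is the piecewise elliptic regularity for the transmission problem, giving $\|\Bv\|_{L^4}\le C\|w\|_{L^4}$, i.e. $L^p$-boundedness of the Helmholtz-type map for piecewise constant $K$ on flat interfaces. Then
\[
|(\Bv\cdot\nabla w,\psi_2)|\le C\|w\|_{L^4}\|\nabla w\|\|\psi_2\|_{L^4}\le \tfrac12\|\sqrt D\nabla w\|^2+C\|\psi_2\|_{L^4}^4\|w\|^2 .
\]
Here $\|\psi_2\|_{L^4}^4\le C\|\psi_2\|^2\|\nabla\psi_2\|^2\in L^1(0,T)$, and Gronwall closes the argument. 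The same estimate gives Lipschitz dependence on $\psi_{\rm in}$ in $L^2$. Continuity in $t$ follows from $\psi\in C([0,T];L^2)$, which comes from the Lions--Magenes lemma.

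\textbf{Step 4: regularity for $H_0^1$ data.} Test with $-\nabla\cdot(D\nabla\psi)$, or more precisely with $-\partial_x^2\psi$ and then use the equation to recover $\partial_z(D\partial_z\psi)$. Interface continuity of $\psi$ and $D\partial_z\psi$ makes $\mathcal{W}$ the natural space. The term $\Bu\cdot\nabla\psi$ is controlled using $\|\Bu\|_{L^4}\le C\|\psi\|_{H^1}$ together with Agmon/Ladyzhenskaya-type interpolation in $\mathcal{W}$. Gronwall then yields $\psi\in L^\infty(0,T;H_0^1)\cap L^2(0,T;\mathcal{W})$.
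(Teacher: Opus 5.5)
Your overall architecture is the standard one: Galerkin in the eigenbasis of $\mathcal{L}$, an $L^2$ energy estimate, a difference estimate for uniqueness, then testing with $\mathcal{L}\psi$ and using $\|\psi\|_{\mathcal W}\simeq\|\mathcal L\psi\|_{L^2}$. The paper itself only cites \cite{CNW2025} here, and reproduces the relevant estimates in the proof of Proposition~\ref{H^1}. However, Step~4 as written does not give a bound on $[0,T]$ for every $T$. You use $\|\Bu\|_{L^4}\le C\|\psi\|_{H^1}$ together with $\|\nabla\psi\|_{L^4}\le C\|\nabla\psi\|_{L^2}^{1/2}\|\mathcal{L}\psi\|_{L^2}^{1/2}$ (Lemmas~\ref{lemmaA1}--\ref{lemma-equivalence}). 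Then the convective term only gives
\[
|(\Bu\cdot\nabla\psi,\mathcal{L}\psi)|\le \tfrac14\|\mathcal{L}\psi\|_{L^2}^2+C\|\psi\|_{H^1}^4\|\nabla\psi\|_{L^2}^2 ,
\]
so $y=\|\sqrt{D}\nabla\psi\|_{L^2}^2$ satisfies $y'\le Cy^3+\dots$. Gronwall would need $\int_0^T\|\psi\|_{H^1}^4\,\dd t$, but the $L^2$ energy estimate controls only $\int_0^T\|\psi\|_{H^1}^2\,\dd t$. You therefore get an $H^1$ bound only up to a time depending on $\|\psi_{\rm in}\|_{H^1}$.

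The missing ingredient is the interpolated velocity estimate
\[
\|\Bu\|_{L^4(\Omega)}\le C\|\psi\|_{L^2(\Omega)}^{1/2}\|\nabla\psi\|_{L^2(\Omega)}^{1/2},\qquad \psi\in H_0^1(\Omega).
\]
It follows from four facts: $\|\Bu\|_{L^2}\le C\|\psi\|_{L^2}$; the layerwise bound $\|\nabla\Bu\|_{L^2(\Omega_i)}\le C\|\nabla\psi\|_{L^2}$, which comes from $-\Delta p=\partial_z\psi$ in each layer and the $\partial_x$-differentiated pressure estimate; Ladyzhenskaya's inequality on each fixed layer; and Poincar\'e. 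This is \eqref{H^1-7}. It makes the Gronwall coefficient $C\|\psi\|_{L^2}^2\|\nabla\psi\|_{L^2}^2\in L^1(0,T)$, and the argument then closes for all $T$. Your own Step~3 bound $\|\Bu\|_{L^4}\le C\|\psi\|_{L^4}$, combined with Ladyzhenskaya for $\psi$, would give the same estimate, but you did not use it there.

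The same estimate repairs an inconsistency between Steps~2 and~3. You only obtain $\partial_t\psi\in L^1(0,T;H^{-1})$. That is enough for Aubin--Lions, but not for the Lions--Magenes lemma, which you need both for $\psi\in C([0,T];L^2)$ and to justify testing the difference equation with $w$. That lemma requires $\partial_t\psi\in L^2(0,T;H^{-1})$. With the interpolated bound,
\[
\|\Bu\psi\|_{L^2}\le\|\Bu\|_{L^4}\|\psi\|_{L^4}\le C\|\psi\|_{L^2}\|\nabla\psi\|_{L^2}\in L^2(0,T),
\]
which gives exactly this.

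Finally, the $L^p$-boundedness of $\psi\mapsto\Bu$ that you propose in Step~3 is true here, since $K$ depends only on $z$ and $W^{1,p}$ theory for layered coefficients applies. It is, however, heavier than necessary. Since $w(t)\in H_0^1$ for a.e.\ $t$, the layerwise estimate above gives $\|\Bv\|_{L^4}\le C\|w\|_{L^2}^{1/2}\|\nabla w\|_{L^2}^{1/2}$, and your uniqueness computation then goes through unchanged.
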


 %%%%%%%%%%%%%%%%%%%%%%%%%%%%%%%%%%%%%%%%%%%%%%%%%%%%%%%%%%%%
\section{Effective interface models and well-posedness}

In this section, we derive effective interface models
by considering the simultaneous vanishing of the thickness
and permeability of the intermediate layer, and investigate
the well-posedness of the resulting limit problems.

\subsection{The simultaneous limit of thickness and permeability}
\label{sec.3.1}

We consider problem \eqref{Sharps}-\eqref{bc} with the
thickness $h_0$ and permeability $K_0$ of the intermediate
layer depending on a small parameter $\varepsilon\in(0,1]$.
More precisely, we assume that
\[
h_0(\varepsilon)\to 0,\qquad K_0(\varepsilon)\to 0
\quad\text{as }\varepsilon\to 0.
\]
To emphasize the dependence on $\varepsilon$, we denote
the permeability and diffusivity coefficients by
$K^\varepsilon$ and $D^\varepsilon$, respectively, and
write $(\Bu^\varepsilon,\psi^\varepsilon,p^\varepsilon)$
for the corresponding solution to \eqref{Sharps}-\eqref{bc}. 
  % Specifically, we assume that the thickness $h_0 = h_0(\varepsilon)$ and the permeability $K_0 = K_0(\varepsilon)$ of the intermediate layer depend on a parameter $\varepsilon \in (0,1]$ and satisfy
% \[
% h_0(\varepsilon), K_0(\varepsilon) \to 0 \quad \text{as } \varepsilon \to 0 .
% \]
%   In the remainder of this paper, the overall permeability and diffusivity are denoted by $K^\varepsilon$ and $D^\varepsilon$, respectively, to indicate their dependence on $\varepsilon$. Then the solution to problem \eqref{Sharps}--\eqref{bc} with the $\varepsilon$-dependent permeability $K^\varepsilon$ and diffusivity $D^\varepsilon$ is denoted by $(\Bu^\varepsilon, \psi^\varepsilon, p^\varepsilon)$.
  
We also introduce the following notation:
\[
\Omega_{+}
:= \Omega \cap \left\{   0 < z < H \right\},
\qquad
\Omega_{-}
:= \Omega \cap \left\{  -H < z < 0 \right\},
\]
  \[
    \Omega_{+,\varepsilon}
    := \Omega \cap\left\{  \tfrac{h_0}{2} < z < H \right\},
    \qquad
    \Omega_{-,\varepsilon}
    := \Omega \cap \left\{  -H < z < -\tfrac{h_0}{2} \right\},
  \]
  and
  \[
    \Omega_{0,\varepsilon}
    := \Omega \cap \left\{  -\tfrac{h_0}{2} < z < \tfrac{h_0}{2} \right\}.
  \]
  When $\varepsilon \to 0$, the intermediate layer $\Omega_{0,\varepsilon}$  collapses, in a geometric sense, to an interface
  \[ \Gamma:=\Omega\cap \{ z=0\}.\]
Formally, $(\Bu^\varepsilon, \psi^\varepsilon, p^\varepsilon)$ converges to a limit $(\Bu^0, \psi^0, p^0)$ satisfying
\begin{equation}\label{Sharps-limit}
  \left\{
    \begin{aligned}
&\Bu^0=- K^0 \left(\nabla p^0 + \psi^0\Be_{z}\right),\\
&\nabla \cdot \Bu^0=0,\\
& \partial_t \psi^0+ \Bu^0\cdot\nabla \psi^0 +  \phi_b'\Bu^0\cdot \Be_z   - \nabla\cdot(D^0\nabla\psi^0)=D^0 \phi_b'',
    \end{aligned}\right.
\end{equation}
where the limit permeability $K^0$ and diffusivity $D^0$ are given by
\begin{equation}\nonumber
    (K^0,D^0)= \left\{
\begin{aligned}
&(K_+,D_+),\quad &&z\in \left(0,H\right), \\
&(K_-,D_-),\quad &&z\in \left(-H, 0\right).
\end{aligned}\right.
\end{equation} 
Due to flux conservation, the normal flux $\Bu^0\cdot \Bn$ must be continuous across the interfaces. Moreover, because the diffusivity in the intermediate layer remains uniformly positive, the concentration $\psi^0$ is expected to remain continuous across the limiting interface, together with continuity of the diffusive flux. However, the simultaneous vanishing of the thickness and permeability is a singular limit for the Darcy law and creates an effective hydraulic resistance at $\Gamma$. The precise form of the resulting transmission condition depends on the scaling of $K_0$ relative to $h_0$.

The continuity of the normal flux across the interfaces $\Gamma$ gives that 
  \[\llbracket u_z^0\rrbracket_{\Gamma}:= u_z^0
|_{z=0^+}- u_z^0|_{z=0^-} =0.\] 
This, together with Darcy's law \eqref{Sharps-limit}$_1$, yields that 
  \begin{equation}\label{2-1}
    -K_+(\partial_z p^0+\psi^0)|_{z=0^+} = -K_-(\partial_z p^0+\psi^0)|_{z=0^-} =:u^0_z|_{\Gamma}.
  \end{equation}
  In $\Omega_{0,\varepsilon}$, by Darcy's law \eqref{Sharps}$_1$, we have
  \begin{equation}\label{2-1-1}
    p^\varepsilon|_{z=\tfrac{h_0}{2}} - p^\varepsilon|_{z=-\tfrac{h_0}{2}} =\int^{\tfrac{h_0}{2}}_{-\tfrac{h_0}{2}}\partial_z p^\varepsilon \dd z = -\int^{\tfrac{h_0}{2}}_{-\tfrac{h_0}{2}}\frac{1}{K_0}u_z^\varepsilon+ \psi^\varepsilon \dd z.
  \end{equation}
Taking to the limit $\varepsilon\to 0$, we formally deduce
\begin{equation}\label{2-2}
  -\alpha \llbracket p^0\rrbracket_{\Gamma} = u^0_z|_{\Gamma},
\end{equation}
where
\begin{equation}\nonumber
\llbracket p^0\rrbracket_{\Gamma}:= p^0|_{z=0^+} - p^0|_{z=0^-} \quad \text{ and }\quad\alpha :=\lim\limits_{\varepsilon\to 0} \frac{K_0}{h_0}.
\end{equation}

Combining \eqref{2-1} and \eqref{2-2}, the finite-conductance law can equivalently be written as
\begin{equation}\label{2-3}
(\partial_z p^{0})^{\pm}=\frac{\alpha}{K_\pm}\llbracket p^0\rrbracket_{\Gamma}-\psi^0,
\qquad\text{on }\Gamma,
\end{equation}
for $0<\alpha<\infty$.  Formally, the same relation reduces to
\begin{equation}\label{2-4}
(\partial_z p^{0})^{\pm}=-\psi^0,\qquad\text{on }\Gamma,
\end{equation}
when $\alpha=0$, consistently with $u_z^0|_\Gamma=0$.  At the opposite extreme, $\alpha=\infty$ forces
\[
\llbracket p^0\rrbracket_\Gamma=0,
\]
so that the limiting interface has perfect hydraulic contact.

For later use, we also record the normalization inherited from the prelimit pressure.  Integrating \eqref{2-1-1} with respect to $x\in\mathbb{T}_L$ gives
\begin{equation}\label{2-5}
\int_{\mathbb{T}_L}\!\left(p^\varepsilon\big|_{z=\tfrac{h_0}{2}}-p^\varepsilon\big|_{z=-\tfrac{h_0}{2}}\right)\dd x
=-\int_{\mathbb{T}_L}\int_{-\tfrac{h_0}{2}}^{\tfrac{h_0}{2}}\left(\frac{1}{K_0}u_z^\varepsilon+\psi^\varepsilon\right)\dd z\dd x.
\end{equation}
By incompressibility and horizontal periodicity,
\[
\frac{\dd}{\dd z}\int_{\mathbb{T}_L}u_z^\varepsilon\dd x
=-\int_{\mathbb{T}_L}\partial_xu_x^\varepsilon\dd x=0.
\]
Together with $u_z^\varepsilon|_{z=\pm H}=0$, this yields
\begin{equation}\label{2-6}
\int_{\mathbb{T}_L}u_z^\varepsilon\dd x=0,
\qquad \text{ for any }z\in[-H,H].
\end{equation}
Substituting \eqref{2-6} into \eqref{2-5} and passing formally to the limit gives
\begin{equation}\label{2-7}
\int_\Gamma\llbracket p^0\rrbracket_\Gamma\dd\sigma=0.
\end{equation}
For $0<\alpha<\infty$, \eqref{2-7} follows automatically from
$u_z^0=-\alpha\llbracket p^0\rrbracket_\Gamma$ and conservation of the total normal flux.  For $\alpha=0$, by contrast, the two subdomains are hydraulically disconnected and their pressures are determined only up to independent additive constants; in that case \eqref{2-7} is used to select a unique representative and should be viewed as a normalization rather than an additional physical interface law.

We summarize the effective interface conditions.  For every $\alpha\in[0,\infty]$, the scalar field and normal flux satisfy
\begin{equation}\label{interface-common} 
\llbracket u_z^0\rrbracket_\Gamma=0,\qquad
\llbracket\psi^0\rrbracket_\Gamma=0,\qquad
\llbracket D^0\partial_z\psi^0\rrbracket_\Gamma=0. 
\end{equation}
The hydraulic transmission law is
\begin{equation}\label{interface-hydraulic-summary} 
\left\{\begin{aligned} 
 &u_z^0= 0, &&\text{ if } \alpha=0,\\ 
 &u_z^0= -\alpha\llbracket p^0\rrbracket_\Gamma, &&\text{ if } 0<\alpha<\infty,\\ 
 &\llbracket p^0\rrbracket_\Gamma = 0, && \text{ if } \alpha=\infty.
\end{aligned} \right.
\end{equation}
Thus the three distinguished limits correspond, respectively, to an impermeable interface, a finite-resistance interface, and perfect hydraulic contact.  For $0<\alpha<\infty$, the middle relation in \eqref{interface-hydraulic-summary} is equivalent to the pressure transmission condition \eqref{2-3}.

Accordingly, the limit problem \eqref{Sharps-limit} is supplemented with the initial condition
\begin{equation}\label{initial-limit}
\psi^0|_{t=0}=\psi_{\rm in}^0,
\end{equation}
the outer boundary conditions
\begin{equation}\label{bc-limit}
\psi^0|_{z=\pm H}=\Bu^0\cdot\Be_z|_{z=\pm H}=0,
\end{equation}
and the interface laws \eqref{interface-common}--\eqref{interface-hydraulic-summary}.  To retain the notation used below, for finite $\alpha \in [0,\infty)$ we equivalently collect these conditions as
\begin{equation}\label{interface1-limit-1}
\left\{\begin{aligned}
&\llbracket u_z^0\rrbracket_\Gamma=0,\qquad
\llbracket\psi^0\rrbracket_\Gamma=0,\qquad
\llbracket D^0\partial_z\psi^0\rrbracket_\Gamma=0,\\
&u_z^0=-\alpha\llbracket p^0\rrbracket_\Gamma,\\
&\int_\Gamma\llbracket p^0\rrbracket_\Gamma\dd\sigma=0,
\end{aligned}\right. 
\end{equation}
where the last condition is a normalization when $\alpha=0$ and follows from the transmission law when $0<\alpha<\infty$.  For $\alpha=\infty$, the interface conditions become
\begin{equation}\label{interface1-limit}
\llbracket u_z^0\rrbracket_\Gamma=0,\qquad
\llbracket p^0\rrbracket_\Gamma=0,\qquad
\llbracket\psi^0\rrbracket_\Gamma=0,\qquad
\llbracket D^0\partial_z\psi^0\rrbracket_\Gamma=0.
\end{equation}

\subsection{Well-posedness of the limit problem}

The main goal of this paper is to rigorously justify the above formal limit process in both finite-time and long-time regimes. To this end, we first define the weak solution to the limit problem. For the case $\alpha= \infty$, the definition is given in Definition \ref{def:weaksolution}.  For finite $\alpha$, the pressure is defined only up to additive constants on the two connected components when $\alpha=0$.  We fix this freedom by introducing
\begin{equation}\label{def:Q-pressure}
\mathcal Q:=\left\{q\in L^2(\Omega): ~q|_{\Omega^\pm } \in H^1(\Omega_\pm),~\int_{\Omega}q \dd\Bx=0,~ \int_\Gamma\llbracket q\rrbracket_\Gamma\dd\sigma=0
 \right\}.
\end{equation}
For $\alpha>0$, the last normalization follows from the transmission law and conservation of the total normal flux; for $\alpha=0$, it is needed to select a unique representative.
  \begin{definition}\label{def:limit}
    Let $\psi_{\rm in}^0\in L^2(\Omega)$ be given,  and let $\alpha\in [0,\infty)$.
    A weak solution of the limit problem \eqref{Sharps-limit}, \eqref{initial-limit}-\eqref{interface1-limit-1} on the interval $[0,T]$ with $T>0$ is a triple $(\Bu^0, p^0, \psi^0)$ satisfying the following conditions:
    \begin{enumerate}[(1)]
\item $\psi^0 \in L^\infty(0,T;L^2(\Omega)) \cap L^2(0,T;H_0^1(\Omega))$ satisfies the weak formula \eqref{weak-psi} for any $t_1,t_2\in [0,T]$ and $\varphi\in H_0^1(\Omega)$, and $\psi^0(0)= \psi_{\rm in}^0$,
\item $p^0 \in L^\infty(0,T;\mathcal Q)$ is a weak solution to the problem
\begin{equation}\label{pressure-limit}
  \left\{
    \begin{aligned}
&-\nabla \cdot(K^0\nabla p^0) = \nabla\cdot (K^0\psi^0 \Be_{z}) , ~~~~&& \text{ in } \Omega_\pm,\\
&\partial_z p^0 = 0,&& \text{ on } \partial\Omega,\\
&\partial_z p^0 = \frac{\alpha}{K^0}\llbracket p^0\rrbracket_{\Gamma}-\psi^0,&& \text{ on } \Gamma,\\
&\int_\Gamma\llbracket p^0\rrbracket_\Gamma\dd\sigma=0,
    \end{aligned}\right.
  \end{equation}
  where $\llbracket p^0\rrbracket_{\Gamma}$ is the jump of $p^0$ at the interface $\Gamma$. 
\item $\Bu^0 \in L^\infty(0,T;\boldsymbol{H})$ satisfies \eqref{Sharps-limit}$_1$.
\end{enumerate}
    \end{definition}

The solution to the finite-conductance pressure transmission problem \eqref{pressure-limit} is understood in the weak sense:
\begin{equation}\nonumber
  \int_{\Omega_\pm} K^0(\nabla  p^0+\psi^0 \Be_z)\cdot \nabla q  \dd \Bx \pm \alpha \int_{\Gamma}   \llbracket p^0\rrbracket_{\Gamma} q   \dd \sigma=0,\quad \text{ for any }q\in H^1(\Omega_\pm).
\end{equation}
Taking the testing function $q= p^0|_{\Omega_\pm}$ yields
    \begin{equation}\nonumber
0=\int_{\Omega_\pm} K^0(|\nabla p^0 |^2 +  \psi^0\partial_z p^0)  \dd \Bx\pm \alpha\int_{\Gamma} \llbracket p^0\rrbracket_{\Gamma} p^0 \dd \sigma.
    \end{equation}
Summing these equations gives
\begin{equation}\nonumber
0= \int_{\Omega_+ \cup \Omega_-} K^0 (|\nabla p^0|^2 +\psi^0\partial_z p^0)\dd \Bx + \alpha\int_{\Gamma}  \llbracket p^0\rrbracket_{\Gamma}^2\dd \sigma.
\end{equation}
This, together with Lemma \ref{lem:generalized-Poincare}, implies that
\begin{equation}\label{pressure-limit-est}
\|p^0\|_{H^1(\Omega_+\cup \Omega_-)} ^2
+\alpha\|\llbracket p^0\rrbracket_\Gamma\|_{L^2(\Gamma)}^2
\leq C\|\psi^0\|_{L^2(\Omega)}^2.
\end{equation}
Indeed, Lemma~\ref{lem:generalized-Poincare} and the two normalizations in \eqref{def:Q-pressure} control the piecewise $H^1$ norm, while testing the pressure equation with $p^0$ and applying Young's inequality controls the gradient and jump terms by $\|\psi^0\|_{L^2(\Omega)}^2$. Hence the corresponding bilinear form is coercive on $\mathcal Q$ (with the jump term providing additional coercivity when $\alpha>0$). The Lax--Milgram theorem therefore gives, for every $\psi^0\in L^2(\Omega)$, a unique normalized weak solution $p^0\in\mathcal Q$.

\begin{remark}
The parameter $\alpha=\lim\limits_{\varepsilon\to0}\frac{K_0}{h_0}$ determines the effective permeability of the limiting interface $\Gamma$ and, consequently, the behavior of the normal velocity across $\Gamma$. Indeed, for $\alpha\in[0,\infty)$, the transmission condition for the pressure yields
\[
u_z^0=-\alpha \llbracket p^0\rrbracket_\Gamma.
\]
Hence, when $\alpha=0$, one has $u_z^0=0$, 
and the limiting interface is impermeable to the normal flow, although the pressure may possess a nontrivial jump satisfying
\[
\int_\Gamma \llbracket p^0\rrbracket_\Gamma\dd \sigma=0.
\]
\end{remark}

Using the elliptic estimate \eqref{pressure-limit-est} and following \cite{CNW2025}, one can also establish the well-posedness theory of the limit problem \eqref{Sharps-limit}, \eqref{initial-limit}-\eqref{interface1-limit} as follows.
  \begin{proposition}\label{prop1-limit}
For each initial datum $\psi_{\rm in}^0\in L^2(\Omega)$, there exists a unique global weak solution $(\Bu^0,\psi^0, p^0)$ to the limit problem \eqref{Sharps-limit}, \eqref{initial-limit}-\eqref{interface1-limit}. The solution map is continuous from $L^2(\Omega)\times[0,\infty)$ into $L^2(\Omega)$. Furthermore, if $\psi_{\rm in}^0\in H_0^1(\Omega)$, then we have
\begin{equation}\nonumber
\psi^0 \in L^\infty(0,T;H_0^1(\Omega)) \cap L^2(0,T;\mathcal{W}^0),\quad \text{for any }T>0,
\end{equation}
Here the weighted space $\mathcal{W}^0$ is defined in \eqref{def:W-epsilon}.
\end{proposition}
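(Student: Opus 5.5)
We follow the Galerkin/energy strategy of \cite{CNW2025}, treating the pressure as a linear, bounded function of $\psi^0$ through the elliptic estimate \eqref{pressure-limit-est}. For $\alpha\in[0,\infty)$ the Lax--Milgram argument gives a linear map $\psi^0\mapsto p^0\in\mathcal Q$. Then $\Bu^0=-K^0(\nabla p^0+\psi^0\Be_z)$ satisfies $\|\Bu^0\|_{L^2}\le C\|\psi^0\|_{L^2}$, and the weak formulation shows $\Bu^0\in\boldsymbol H$: it is divergence free in $\Omega_\pm$, $u^0_z=0$ on $z=\pm H$, and $\llbracket u_z^0\rrbracket_\Gamma=0$ holds weakly. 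The case $\alpha=\infty$ is exactly Proposition~\ref{prop1} with two layers, so only finite $\alpha$ needs work. With this map the problem reduces to a single nonlocal advection--diffusion equation for $\psi^0$.

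\emph{Step 1 (approximation and energy bound).} We take a Galerkin basis of $H_0^1(\Omega)$, for instance eigenfunctions of $-\nabla\cdot(D^0\nabla\cdot)$ with Dirichlet data, which automatically encode the transmission conditions $\llbracket\psi\rrbracket=\llbracket D^0\partial_z\psi\rrbracket=0$. We test with $\psi^0$. Since $\nabla\cdot\Bu^0=0$ with $\Bu^0\cdot\Be_z=0$ on $\partial\Omega$ and normal continuity across $\Gamma$, we get $(\Bu^0\cdot\nabla\psi^0,\psi^0)=0$. The remaining terms are controlled by Young's inequality and Poincaré's inequality: $|(\phi_b'u^0_z,\psi^0)|\le C_\delta\|\psi^0\|^2$ and $(D^0\phi_b'',\psi^0)$. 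This gives the $L^\infty L^2\cap L^2H_0^1$ bound, with exponential growth at worst. For $\partial_t\psi^0$ we bound it in $L^{4/3}(0,T;H^{-1})$ in 2D via $\Bu^0\psi^0\in L^{4/3}L^2$, which uses $\Bu^0\in L^\infty L^2$ and Ladyzhenskaya's inequality for $\psi^0$. Aubin--Lions compactness then lets us pass to the limit; the nonlinear term converges because the pressure map is linear and bounded on $L^2$, so $\Bu^m\to\Bu^0$ strongly in $L^2L^2$.

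\emph{Step 2 (uniqueness and continuity).} For two solutions, $w=\psi_1-\psi_2$ and $\Bv=\Bu_1-\Bu_2$ satisfy $\|\Bv\|\le C\|w\|$. Testing with $w$ leaves the term $(\Bv\cdot\nabla\psi_2,w)$. This is the delicate point, because $\Bv$ is only $L^2$. We would first try the bound $|(\Bv\cdot\nabla\psi_2,w)|\le\|\Bv\|\,\|\nabla\psi_2\|_{L^4}\|w\|_{L^4}$, which needs $\nabla\psi_2\in L^2L^4$ and hence the $\mathcal W^0$ regularity from Step 3. For merely $L^2$ data, we would instead use the instantaneous smoothing $\psi\in L^2(t_0,T;\mathcal W^0)$ for $t_0>0$, together with continuity at $t=0$, as done in \cite{CNW2025}. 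Alternatively, we would use the better elliptic regularity of $\Bv$: since $w\in H^1$, piecewise $H^2$ estimates for the transmission problem give $\Bv\in$ piecewise $H^1$, and the estimate then closes with Ladyzhenskaya and Gronwall. The same computation yields continuous dependence in $L^2$, and continuity in $t$ follows from $\psi^0\in C([0,T];L^2)$.

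\emph{Step 3 (higher regularity).} For $\psi^0_{\rm in}\in H_0^1$ we test with $-\nabla\cdot(D^0\nabla\psi^0)$. The needed ingredient is piecewise $H^2$ regularity for the Robin-type transmission problem \eqref{pressure-limit}, proved by differentiating in the tangential direction $x$ (using periodicity) and then recovering $\partial_{zz}p^0$ from the equation. This gives $\|\nabla\Bu^0\|_{L^2(\Omega_\pm)}\le C\|\psi^0\|_{H^1}$. The advection term is then handled via Gagliardo--Nirenberg, and Gronwall yields $\psi^0\in L^\infty H_0^1\cap L^2\mathcal W^0$, with the $\mathcal W^0$ norm controlled by $\|\nabla\cdot(D^0\nabla\psi^0)\|$ through tangential differentiation. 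We expect the main obstacle to be this elliptic regularity with the jump term $\alpha\llbracket p^0\rrbracket$. Tangential derivatives of the jump are handled naturally because the jump term is coercive and commutes with $\partial_x$, so we would start from the difference-quotient argument in $x$.
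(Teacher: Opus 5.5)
Your proposal is correct and follows essentially the same route as the paper, which justifies this proposition only by combining the pressure estimate \eqref{pressure-limit-est} with the Galerkin/energy framework of \cite{CNW2025}; your ingredients (the pressure as a bounded linear function of $\psi^0$, the $\psi^0$- and $\mathcal{L}^0\psi^0$-tests, and piecewise $H^2$ pressure via tangential differentiation of the transmission problem) are the ones the paper itself uses in Section~4. One slip needs fixing. With only $\Bu^0\in L^\infty(0,T;L^2)$ and Ladyzhenskaya, you get $\Bu^0\psi^0\in L^4(0,T;L^{4/3}(\Omega))$, not $L^{4/3}(0,T;L^2)$, so this does not give $\partial_t\psi^0\in L^2(0,T;H^{-1})$. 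That $L^2(0,T;H^{-1})$ bound is what you need for $\psi^0\in C([0,T];L^2)$ and for a rigorous uniqueness computation. Use instead the piecewise-$H^1$ velocity bound $\|\Bu^0\|_{L^4}\le C\|\psi^0\|^{1/2}\|\nabla\psi^0\|^{1/2}$ from your Step~3. The same bound closes your second uniqueness route via $|(\Bv\cdot\nabla\psi_2,w)|\le C\|w\|\,\|\nabla w\|\,\|\nabla\psi_2\|$ and Gronwall with $\|\nabla\psi_2\|^2\in L^1(0,T)$, so the first route through $\nabla\psi_2\in L^4$ is unnecessary.
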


\section{Uniform estimates in thickness and permeability}  
In this section, we establish uniform estimates for solutions to problem \eqref{Sharps}--\eqref{bc} and its limiting counterpart, which play a crucial role in proving finite-time convergence and attractor convergence in Section~5. We begin with dissipative estimates for the scalar field, followed by higher-order estimates in the horizontal direction, and then derive estimates for the pressure and velocity in each layer. The main challenge is to obtain estimates with constants independent of the vanishing thickness and permeability of the intermediate layer, despite the resulting loss of uniform ellipticity in the pressure equation.

In the rest of this paper, we assume that the thickness $h_0 = h_0(\varepsilon)$ and the permeability $K_0 = K_0(\varepsilon)$ of the intermediate layer depend on a parameter $\varepsilon \in (0,1]$ and satisfy
\begin{equation}\label{alpha}
  h_0(\varepsilon),~~K_0(\varepsilon) \to 0 \quad \text{ and } \quad \frac{K_0(\varepsilon)}{h_0(\varepsilon)} \to \alpha\in [0,\infty], \qquad \text{as } \varepsilon \to 0 .
\end{equation}
The overall permeability and diffusivity are denoted by $K^\varepsilon$ and $D^\varepsilon$, respectively. The limit permeability and diffusivity corresponding to the two-layer domain are denoted by $K^0$ and $D^0$, respectively. 

For convenience, we adopt a unified solution notation $(\Bu^\varepsilon,  p^\varepsilon, \psi^\varepsilon)$ of the solution to the problem \eqref{Sharps}--\eqref{bc} for $\varepsilon\in (0,1]$ or the limit problem \eqref{Sharps-limit}, \eqref{initial-limit}-\eqref{interface1-limit} for $\varepsilon=0$.

\begin{proposition}\label{L^2}
Let $\delta$ be chosen so that \eqref{delta_2} is satisfied and $\alpha\in[0,\infty]$.
Then for any $\varepsilon\in[0,1]$ and any initial data $\psi_{\rm in}^\varepsilon\in L^2(\Omega)$, the corresponding solution $(\Bu^\varepsilon, p^\varepsilon, \psi^\varepsilon)$ to problem \eqref{Sharps}--\eqref{bc} (with $\varepsilon>0$) or its limit problem  \eqref{Sharps-limit}, \eqref{initial-limit}-\eqref{interface1-limit} (with $\varepsilon=0$) satisfies 
\begin{equation}\label{L^2-1}
\|\psi^\varepsilon(t)\|_{L^2(\Omega)}^2 \leq \|\psi_{\rm in}^\varepsilon\|_{L^2(\Omega)}^2 e^{-\frac{\min_i D_i}{H^2}t} +\frac{M_1H^2}{\min_i D_i}\left(1- e^{-\frac{\min_i D_i}{H^2}t}\right)
\end{equation}
and
\begin{equation}\label{L^2-1-1}
\int_0^t \|\sqrt{D^\varepsilon} \nabla \psi^\varepsilon(s)\|_{L^2(\Omega)}^2\dd s  \leq M_1 t +\| \psi_{\rm in}^\varepsilon\|_{L^2(\Omega)}^2,
\end{equation}
for any $t>0$, where the constant $M_1$ given in \eqref{L^2-8} is independent of $\varepsilon$ and the initial data.

In particular, for any  $t\ge T_1:=T_1(\|\psi_{\rm in}^\varepsilon\|_{L^2(\Omega)}) =\frac{2H^2}{\min_i D_i}\ln^+ \|\psi_{\rm in}^\varepsilon\|_{L^2(\Omega)}$, one has
\begin{equation}\label{L^2-1-2}
\|\psi^\varepsilon(t) \|_{L^2(\Omega)}^2\leq  \frac{M_1H^2}{\min_i D_i}+1,~~~~~~\int_t^{t+1} \|\sqrt{D^\varepsilon}\nabla\psi^\varepsilon \|_{L^2(\Omega)}^2 \dd s \leq  \frac{M_1H^2 }{\min_i D_i } +M_1+1.
\end{equation}
    \end{proposition}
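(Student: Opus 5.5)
The plan is to test the scalar equation with $\psi^\varepsilon$ itself, which is admissible since $\psi^\varepsilon\in L^2(0,T;H_0^1(\Omega))$. The transport term vanishes: $(\Bu^\varepsilon\cdot\nabla\psi^\varepsilon,\psi^\varepsilon)=0$ because $\nabla\cdot\Bu^\varepsilon=0$, $u_z^\varepsilon=0$ at $z=\pm H$, and $u_z^\varepsilon$ is continuous across the interfaces; this holds for $\alpha<\infty$ at $\varepsilon=0$ as well, since $\llbracket u_z^0\rrbracket_\Gamma=0$. At the level of weak solutions the energy identity is justified by the usual Lions--Magenes argument, or on Galerkin approximations as in \cite{CNW2025}. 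This gives
\[
\frac12\frac{\dd}{\dd t}\|\psi^\varepsilon\|^2+\|\sqrt{D^\varepsilon}\nabla\psi^\varepsilon\|^2=-(\phi_b'u_z^\varepsilon,\psi^\varepsilon)+(D^\varepsilon\phi_b'',\psi^\varepsilon).
\]
The forcing term is routine. Since $\operatorname{supp}\phi_b'\subset\{H-\delta<|z|<H\}$ lies in the outer layers, where $D^\varepsilon=D_\pm$ is independent of $\varepsilon$, we integrate by parts to obtain $(D\phi_b'',\psi)=-(D\phi_b',\partial_z\psi)$. Then Young's inequality bounds it by $\frac14\|\sqrt D\nabla\psi\|^2+C\max_iD_i\,c_\Delta^2L/\delta$.

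The main obstacle is the buoyancy term $(\phi_b'u_z,\psi)$, which must be bounded uniformly even though $K_0\to0$ and the pressure equation degenerates. I would use the pressure weak form \eqref{weakform-p} (respectively its two-layer analogue \eqref{pressure-limit}) tested with $q=p^\varepsilon$. Combined with Darcy's law, this yields the energy identity
\[
\|K^{-1/2}\Bu\|^2=-(u_z,\psi),
\]
plus the nonnegative jump term $\alpha\|\llbracket p\rrbracket\|^2$ when $\varepsilon=0$. Consequently $\|K^{-1/2}\Bu\|\le\|\sqrt K\psi\|$. On $\operatorname{supp}\phi_b'$ we have $K=K_\pm$, so $|(\phi_b'u_z,\psi)|\le \frac{c_\Delta}{\delta}\sqrt{\max K_\pm}\,\|K^{-1/2}\Bu\|\,\|\psi\|_{L^2(\Omega_\delta)}$, where $\Omega_\delta$ denotes the boundary strips of width $\delta$. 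The naive bound $\|K^{-1/2}\Bu\|\le\sqrt{\max K_\pm}\|\psi\|$ would be fatal, because it involves all of $\psi$. Instead, I would exploit the fact that the velocity energy is driven only by $\psi$. Using the boundary-strip Poincaré inequality $\|\psi\|_{L^2(\Omega_\delta)}\le\delta\|\partial_z\psi\|_{L^2(\Omega_\delta)}$ (valid since $\psi=0$ at $z=\pm H$), together with $\|\psi\|\le C H\|\partial_z\psi\|$, the term is bounded by $c_\Delta\max K_\pm\,\frac{\delta}{\delta}\,\|\nabla\psi\|\cdot CH\|\nabla\psi\|$. This is not small in $\delta$. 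I expect the condition \eqref{delta_2} is designed precisely to fix this, so I would refine the estimate by replacing $\|K^{-1/2}\Bu\|$ with a localized bound. Testing the pressure equation against the localized buoyancy, i.e.\ writing $(\phi_b'u_z,\psi)=-(K(\nabla p+\psi\Be_z),\phi_b'\psi\Be_z)$, and applying the Hardy inequality $\|\psi/(H-|z|)\|\le2\|\partial_z\psi\|$ on the strips, I expect to reach
\[
|(\phi_b'u_z,\psi)|\le C_*c_\Delta\max K_\pm\,\delta\,\|\nabla\psi\|^2.
\]
Then \eqref{delta_2}, a smallness condition on $\delta$ of the form $C_*c_\Delta\max K_\pm\delta\le\frac14\min D_i$, absorbs this term into the dissipation. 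No constant depends on $h_0$ or $K_0$, because only the outer-layer coefficients enter.

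This gives $\frac{\dd}{\dd t}\|\psi\|^2+\|\sqrt D\nabla\psi\|^2\le M_1$, with $M_1$ as in \eqref{L^2-8}. Integrating in time yields \eqref{L^2-1-1}. Using the Poincaré inequality $\|\psi\|^2\le H^2\|\partial_z\psi\|^2\le\frac{H^2}{\min D_i}\|\sqrt D\nabla\psi\|^2$ and Gronwall's inequality yields \eqref{L^2-1}. For \eqref{L^2-1-2}, when $t\ge T_1$ the exponential factor satisfies $e^{-\min D_i t/H^2}\|\psi_{\rm in}\|^2\le1$, which gives the first bound. Integrating the differential inequality over $[t,t+1]$ and inserting the first bound for $\|\psi(t)\|^2$ gives the second.
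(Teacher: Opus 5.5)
You handle the transport term, the forcing term and the Gr\"onwall/absorbing-ball steps correctly. The integration by parts in the forcing term is legitimate because $\psi^\varepsilon$ vanishes at $z=\pm H$ and $\operatorname{supp}\phi_b'$ lies in the outer layers; the constant it produces is dominated by $M_1$. The gap is in the step you yourself call the main obstacle. There, the bound $|(\phi_b'u_z^\varepsilon,\psi^\varepsilon)|\le C_*c_\Delta\max K_\pm\,\delta\|\nabla\psi^\varepsilon\|_{L^2(\Omega)}^2$ is only hoped for, and the mechanism you name cannot produce it. On $\Omega_\delta$ you have $H-|z|<\delta$, so Hardy's inequality for $\psi^\varepsilon$ gives only $\|\phi_b'\psi^\varepsilon\|_{L^2}\le 2c_\Delta\|\partial_z\psi^\varepsilon\|_{L^2}$. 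That is exactly what the strip Poincar\'e inequality already gave you: it cancels the factor $\delta^{-1}$ in $\phi_b'$ once, and nothing more. The remaining factor $\|u_z^\varepsilon\|_{L^2(\Omega_\delta)}$ stays of order one as long as the velocity is controlled only through the $L^2$ identity $\|K^{-1/2}\Bu^\varepsilon\|\le\|\sqrt{K^\varepsilon}\psi^\varepsilon\|$, which is precisely why your first attempt failed. Rewriting the term through Darcy's law is merely an identity. Moreover, $\phi_b'\psi^\varepsilon\Be_z$ is not a gradient, so it cannot be inserted as a test field in \eqref{weakform-p}, and you do not say what test function would be used instead.

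The missing idea is that the second factor of $\delta$ must come from the velocity. The normal component $u_z^\varepsilon$ also vanishes at $z=\pm H$, so $\|u_z^\varepsilon\|_{L^2(\Omega_\delta)}\le\delta\|\partial_z u_z^\varepsilon\|_{L^2(\Omega_\delta)}$, and incompressibility gives $\partial_z u_z^\varepsilon=-\partial_x u_x^\varepsilon$. Since $K^\varepsilon$ is independent of $x$, you can differentiate the pressure problem in $x$ and test with $\partial_x p^\varepsilon$. This is \eqref{pressure_x}, or \eqref{pressure-limit-x} when $\varepsilon=0$ and $\alpha<\infty$, where the jump term has a favourable sign. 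It yields $\|\sqrt{K^\varepsilon}\partial_x\nabla p^\varepsilon\|\le\|\sqrt{K^\varepsilon}\partial_x\psi^\varepsilon\|$, hence $\|\partial_x\Bu^\varepsilon\|_{L^2(\Omega)}\le 2\max_iK_i\|\partial_x\psi^\varepsilon\|_{L^2(\Omega)}$ uniformly in $\varepsilon$. Combining these,
\[
|(\phi_b'u_z^\varepsilon,\psi^\varepsilon)|\le \frac{c_\Delta}{\delta}\cdot\delta\|\partial_x u_x^\varepsilon\|_{L^2(\Omega)}\cdot\delta\|\partial_z\psi^\varepsilon\|_{L^2(\Omega)}\le 2c_\Delta\delta\max_iK_i\|\nabla\psi^\varepsilon\|_{L^2(\Omega)}^2,
\]
which \eqref{delta_2} absorbs into the dissipation. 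Without this horizontal-derivative ($H^1$-type) control of the velocity near the outer boundary, your buoyancy estimate, and with it the whole proposition, does not close.
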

    \begin{proof} 
We first prove the uniform estimate for $\varepsilon>0$. Testing the equation \eqref{Sharps}$_3$ by $\psi^\varepsilon$ and using integration by parts, one has
\begin{equation}\label{L^2-2}
\frac12  \frac{\dd}{\dd t} \|\psi^\varepsilon\|_{L^2(\Omega)}^2 + \|\sqrt{D^\varepsilon}\nabla\psi^\varepsilon\|_{L^2(\Omega)}^2 =  -   ( \phi_b' u_z^\varepsilon, \psi^\varepsilon ) + (D^\varepsilon \phi_b'', \psi^\varepsilon ) .
\end{equation}
By virtue of \eqref{phi_b1} and \eqref{phi_b2},  we have
\begin{equation}\label{L^2-3}
\left|( \phi_b' u_z^\varepsilon, \psi^\varepsilon ) \right| \leq  c_\Delta \delta^{-1}\|u_z^\varepsilon\|_{L^2(\Omega_\delta)} \|\psi^\varepsilon\|_{L^2(\Omega_\delta)},
\end{equation}
where $\Omega_\delta := \Omega \cap (\{-H<z<-H+\delta\} \cup \{H-\delta <z<H\})$. Noting that both $u_z^\varepsilon$ and $\psi^\varepsilon$ vanish on  the boundary $\{z=\pm H\}$, one uses Poincar\'e inequality to obtain
\begin{equation}\label{L^2-3-1}
\|u_z^\varepsilon\|_{L^2(\Omega_\delta)} \leq   \delta\|\partial_z u_z^\varepsilon\|_{L^2(\Omega_\delta)},~~\|\psi^\varepsilon\|_{L^2(\Omega_\delta)} \leq   \delta\|\partial_z \psi^\varepsilon\|_{L^2(\Omega_\delta)},
\end{equation}
and thus,
\begin{equation}\label{L^2-4}
\left|( \phi_b' u_z^\varepsilon, \psi^\varepsilon ) \right|\leq c_\Delta\delta\|\partial_z u_z^\varepsilon\|_{L^2(\Omega_\delta)}\|\partial_z \psi^\varepsilon\|_{L^2(\Omega_\delta)} \leq  c_\Delta\delta \|\partial_x u_x^\varepsilon\|_{L^2(\Omega)} \|\partial_z \psi^\varepsilon\|_{L^2(\Omega)}.
\end{equation}
Here we have used the fact that $\partial_z u_z^\varepsilon = - \partial_x u_x^\varepsilon$ due to the incompressibility condition $\nabla \cdot \Bu^\varepsilon =0$. Since the permeability coefficient $K^\varepsilon$ is independent of $x$, we have from \eqref{Sharps}$_1$ that
\begin{equation}\label{L^2-5}
\|\partial_x \Bu^\varepsilon\|_{L^{2}(\Omega)} \leq   \|K^\varepsilon\partial_x \nabla p^\varepsilon\|_{L^{2}(\Omega)}+ \|K^\varepsilon\partial_x  \psi^\varepsilon\|_{L^2(\Omega)},
\end{equation}
while the pressure $p^\varepsilon$ is determined by the elliptic problem
\begin{equation} \label{pressure_x}
\left\{
  \begin{aligned}
    &-\nabla\cdot(K^\varepsilon\nabla  \partial_x p^\varepsilon) = \nabla\cdot (K^\varepsilon \partial_x \psi^\varepsilon \Be_z) ,  ~~~~\text{ in } \Omega,\\
    &\partial_z \partial_x p^\varepsilon(x,-H)  = \partial_z \partial_x p^\varepsilon(x,H)  = 0.
  \end{aligned}\right.
\end{equation}
Then it follows from the standard elliptic estimate  that we have
\begin{equation}\label{L^2-5-1}
  \|\sqrt{K^\varepsilon}\partial_x\nabla p^\varepsilon\|_{L^2(\Omega)} \leq    \|\sqrt{K^\varepsilon}\partial_x\psi^\varepsilon\|_{L^2(\Omega)}.
\end{equation}
This, together with \eqref{L^2-5}, yields
\begin{equation}\label{L^2-5-2}
  \begin{aligned}
    \|\partial_x \Bu^\varepsilon\|_{L^{2}(\Omega)} \leq &~\|\sqrt{K^\varepsilon}\|_{L^\infty(\Omega)}(\|\sqrt{K^\varepsilon}\partial_x\nabla p^\varepsilon\|_{L^2(\Omega)}+  \|\sqrt{K^\varepsilon}\partial_x\psi^\varepsilon\|_{L^2(\Omega)})\\
    \leq&~  2 \max_i  K_i  \|\partial_x\psi^\varepsilon\|_{L^2(\Omega)}.
  \end{aligned}
\end{equation}
Hence,
\begin{equation}\label{L^2-6}
  \left|( \phi_b' u_z^\varepsilon, \psi^\varepsilon ) \right|\leq 2c_\Delta \delta \max_i  K_i    \|\nabla \psi^\varepsilon\|_{L^2(\Omega)}^2.
\end{equation}
On the other hand, using H\"older's inequality, Young's inequality and \eqref{L^2-3-1}, we have
\begin{equation}\label{L^2-7}
  \begin{aligned}
    \left|(D^\varepsilon \phi_b'',\psi ^\varepsilon)\right| \leq  &~ 2c_\Delta \delta^{-2} \max_i D_i  |\Omega_\delta|^\frac12 \| \psi^\varepsilon\|_{L^2(\Omega_\delta)}  \\
    \leq  &~ 2c_\Delta L^\frac12  \delta^{-\frac12} \max_i D_i    \| \partial_z\psi^\varepsilon\|_{L^2(\Omega_\delta)} \\
    \leq &~ \frac{1}{4}\min_i  D_i \|\nabla \psi^\varepsilon\|_{L^2(\Omega)}^2+  \frac{8 c_\Delta^2 L }{\delta}\frac{ (\max_i D_i )^2}{\min_i  D_i }.
  \end{aligned}
\end{equation}

Substituting \eqref{L^2-6}-\eqref{L^2-7} into \eqref{L^2-2} and choosing
\begin{equation}\label{delta_2}
  \delta \leq \min\left\{\frac{H}{2},\frac{ \min_i D_i}{8  c_\Delta \max_i  K_i }\right\},
\end{equation}
one has
\begin{equation}\label{L^2-8}
  \frac{\dd}{\dd t} \|\psi^\varepsilon\|_{L^2(\Omega)}^2  +   \|\sqrt{D^\varepsilon}\nabla\psi^\varepsilon\|_{L^2(\Omega)}^2 \leq  \frac{16 c_\Delta^2 L }{\delta}\frac{ (\max_i D_i )^2}{\min_i  D_i }=:M_1.
\end{equation}
Integrating \eqref{L^2-8} over $[0,T]$ gives \eqref{L^2-1-1}. Furthermore, due to Poincar\'e inequality, we also infer from \eqref{L^2-8} that
\begin{equation}\nonumber
  \frac{\dd}{\dd t} \|\psi^\varepsilon\|_{L^2(\Omega)}^2  + \frac{\min_i D_i}{H^2} \|\psi^\varepsilon\|_{L^2(\Omega)}^2 \leq M_1.
\end{equation}
Using the classical Gr\"onwall inequality, we get \eqref{L^2-1}. In particular, it follows from \eqref{L^2-1} that
\begin{equation}\label{L^2-10}
  \|\psi^\varepsilon(t)\|_{L^2(\Omega)}^2 \leq \frac{M_1H^2}{\min_i D_i}+1,~~~~\text{ for any }t \ge T_1=\frac{2H^2}{\min_i D_i}\ln^+  \|\psi_{\rm in}^\varepsilon\|_{L^2(\Omega)}.
\end{equation}
Then, for $t\ge T_1$, we integrate the energy inequality \eqref{L^2-8} over $[t,t+1]$ to obtain
\begin{equation}\label{L^2-11}
  \|\psi^\varepsilon(t+1) \|_{L^2(\Omega)}^2+  \int_t^{t+1} \|\sqrt{D^\varepsilon}\nabla\psi^\varepsilon \|_{L^2(\Omega)}^2 \dd s  \leq \frac{M_1H^2}{\min_i D_i} +M_1 +1.
\end{equation}
Combining \eqref{L^2-10} and \eqref{L^2-11}, one proves \eqref{L^2-1-2}. 

The case $\varepsilon=0$ and $\alpha=\infty$ can be proved in the same way.
For the case $\varepsilon=0$ and $\alpha\in [0,\infty)$, the only difference is that the limit pressure $p^0$ is determined by the pressure transmission problem \eqref{pressure-limit} instead of the Neumann problem \eqref{pressure}. To this end, we take the $x$-derivative of \eqref{pressure-limit} to obtain
\begin{equation}\label{pressure-limit-x}
\left\{\begin{aligned}
&-\nabla \cdot(K^0\nabla \partial_xp^0) = \nabla\cdot (K^0\partial_x\psi \Be_{z}) , ~~~~&& \text{ in } \Omega_\pm,\\
&\partial_z \partial_x p^0 = 0,&& \text{ on } z=\pm H,\\
&\partial_z \partial_x p^0 = \frac{\alpha}{K^0}\llbracket \partial_xp^0\rrbracket-\partial_x\psi^0,&& \text{ on } z=0,
    \end{aligned}\right.
  \end{equation}
Then, similar to \eqref{pressure-limit-est}, one also has
\begin{equation}\label{L^2-12}
\int_{\Omega_+ \cup \Omega_-} K^0  |\nabla\partial_x p^0|^2  \dd \Bx + \alpha\int_{\Gamma}  \llbracket \partial_xp^0\rrbracket ^2\dd \sigma\leq  \int_{\Omega_+ \cup \Omega_-} K^0  |\partial_x\psi^0|^2  \dd \Bx,
\end{equation}
which coincides with \eqref{L^2-5-1} for the case $\varepsilon>0$.  
With \eqref{L^2-12} in hand, one can prove the estimate \eqref{L^2-1}-\eqref{L^2-1-1} and \eqref{L^2-1-2} similarly. Then we finish the proof of this proposition.
\end{proof}

\begin{proposition}\label{H^1}
Let $\delta$ be chosen so that \eqref{delta_2} is satisfied and $\alpha\in[0,\infty]$. 
Then for any $\varepsilon\in[0,1]$ and any initial data $\psi_{\rm in}^\varepsilon\in L^2(\Omega)$, the corresponding solution $(\Bu^\varepsilon, p^\varepsilon, \psi^\varepsilon)$ to problem \eqref{Sharps}--\eqref{bc} (with $\varepsilon>0$) or its limit problem  \eqref{Sharps-limit}, \eqref{initial-limit}-\eqref{interface1-limit} (with $\varepsilon=0$)  satisfies 
\begin{equation} \label{H^1-0}
  \|\sqrt{D^\varepsilon}\nabla \psi^\varepsilon(t)\|_{L^2(\Omega)}^2   \leq  M_5, \qquad\text{for any } t\ge T_1+1 
\end{equation}
and 
\begin{equation} \label{H^1-0-1}
  \int_t^{t+1}\|\L^\varepsilon \psi^\varepsilon\|_{L^2(\Omega)}^2  \dd s \leq  M_6, \qquad\text{for any } t\ge T_1+1,
\end{equation}
where the constants $M_5,M_6$ given in \eqref{M5} and \eqref{M6}, respectively, are independent of $\varepsilon$ and the initial data. 

If, in addition, $\psi_{\rm in}^\varepsilon\in H_0^1(\Omega)$, we have
\begin{equation}\label{H^1-1}
  \|\sqrt{D^\varepsilon}\nabla \psi^\varepsilon(t)\|_{L^2(\Omega)} ^2 \leq M_7(\|\psi_{\rm in}^\varepsilon\|_{H^1(\Omega)}),\qquad\text{for any } t\ge 0 
\end{equation} 
and 
\begin{equation}\label{H^1-1-1}
  \int_0^t\|\L^\varepsilon \psi^\varepsilon\|_{L^2(\Omega)}^2  \dd s  \leq M_8(t,\|\psi_{\rm in}^\varepsilon\|_{H^1(\Omega)}),\qquad\text{for any } t\ge 0,
\end{equation} 
where the constants $M_7,M_8$ independent of $\varepsilon$  are given in \eqref{M7} and \eqref{M8}, respectively.
\end{proposition}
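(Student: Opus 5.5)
We test \eqref{Sharps}$_3$ with $-\L^\varepsilon\psi^\varepsilon$, where $\L^\varepsilon\psi:=\nabla\cdot(D^\varepsilon\nabla\psi)$. Since $D^\varepsilon\partial_z\psi^\varepsilon$ is continuous across the interfaces and $\psi^\varepsilon$ vanishes at $z=\pm H$, integration by parts has no interface terms. It gives
\begin{equation}\nonumber
\frac12\frac{\dd}{\dd t}\|\sqrt{D^\varepsilon}\nabla\psi^\varepsilon\|_{L^2(\Omega)}^2+\|\L^\varepsilon\psi^\varepsilon\|_{L^2(\Omega)}^2=(\Bu^\varepsilon\cdot\nabla\psi^\varepsilon,\L^\varepsilon\psi^\varepsilon)+(\phi_b'u_z^\varepsilon,\L^\varepsilon\psi^\varepsilon)-(D^\varepsilon\phi_b'',\L^\varepsilon\psi^\varepsilon).
\end{equation}
The last two terms are handled by Cauchy--Schwarz and Young. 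For the middle one we need $\|u_z^\varepsilon\|_{L^2}\le \max_iK_i\|\psi^\varepsilon\|_{L^2}$, which follows from the pressure energy identity (testing \eqref{weakform-p} with $p^\varepsilon$, or \eqref{pressure-limit} with $p^0$ when $\varepsilon=0$). This bound is uniform in $\varepsilon$, because only $\|\sqrt{K^\varepsilon}\nabla p^\varepsilon\|\le\|\sqrt{K^\varepsilon}\psi^\varepsilon\|$ is used. The forcing contributions are then bounded by $\frac18\|\L^\varepsilon\psi^\varepsilon\|^2+C_\delta(1+\|\psi^\varepsilon\|^2)$, with constants depending only on $\delta,c_\Delta,\max_iK_i,\max_iD_i,L$.

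The key step, and the main obstacle, is the nonlinear term. Here $\Bu^\varepsilon$ is only $L^2$, and the vertical regularity of $\psi^\varepsilon$ is controlled only through $D^\varepsilon\partial_z\psi^\varepsilon\in H^1$. We would use the anisotropic structure.
\begin{enumerate}[(i)]
\item Bound $\|\partial_x\Bu^\varepsilon\|_{L^2}\le 2\max_iK_i\|\partial_x\psi^\varepsilon\|_{L^2}$ by \eqref{L^2-5-2}, and by \eqref{L^2-12} in the limit case.
\item Bound $\|\partial_x\nabla\psi^\varepsilon\|+\|\nabla(D^\varepsilon\partial_z\psi^\varepsilon)\|\le C\|\L^\varepsilon\psi^\varepsilon\|$, with $C$ depending only on $\min_iD_i,\max_iD_i$. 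This is the $\mathcal W$-type elliptic estimate: test with $\partial_{xx}\psi^\varepsilon$, then read off $\partial_z(D^\varepsilon\partial_z\psi^\varepsilon)=\L^\varepsilon\psi^\varepsilon-D^\varepsilon\partial_{xx}\psi^\varepsilon$.
\item Use the one-dimensional Agmon/Ladyzhenskaya inequality in $x$ for $\Bu^\varepsilon$ and in $z$ for $\nabla\psi^\varepsilon$. For the vertical direction we apply it to the continuous flux $D^\varepsilon\partial_z\psi^\varepsilon$ rather than to $\partial_z\psi^\varepsilon$; this keeps the thin layer $\Omega_{0,\varepsilon}$ from degrading the constants.
\end{enumerate}
Concretely, we would estimate
\begin{equation}\nonumber
|(\Bu^\varepsilon\cdot\nabla\psi^\varepsilon,\L^\varepsilon\psi^\varepsilon)|\le \|\Bu^\varepsilon\|_{L^\infty_xL^2_z}\|\nabla\psi^\varepsilon\|_{L^2_xL^\infty_z}\|\L^\varepsilon\psi^\varepsilon\|\le C\|\psi^\varepsilon\|^{\frac12}\|\nabla\psi^\varepsilon\|\,\|\L^\varepsilon\psi^\varepsilon\|^{\frac32}.
\end{equation}
Young's inequality then gives $\frac18\|\L^\varepsilon\psi^\varepsilon\|^2+C\|\psi^\varepsilon\|^2\|\nabla\psi^\varepsilon\|^4$. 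We expect the thin-strip inequality from the Appendix to supply the $\varepsilon$-independent constant in the $L^\infty_z$ step.

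This leads to the differential inequality
\begin{equation}\nonumber
y'+\|\L^\varepsilon\psi^\varepsilon\|^2\le C\|\psi^\varepsilon\|^2\|\nabla\psi^\varepsilon\|^2\,y+C(1+\|\psi^\varepsilon\|^2),\qquad y:=\|\sqrt{D^\varepsilon}\nabla\psi^\varepsilon\|^2.
\end{equation}
For $t\ge T_1$, Proposition~\ref{L^2}, specifically \eqref{L^2-1-2}, bounds $\|\psi^\varepsilon\|^2$ and $\int_t^{t+1}y\,\dd s$ uniformly. The uniform Gr\"onwall lemma then gives \eqref{H^1-0} for $t\ge T_1+1$ with an explicit $M_5$, and integrating over $[t,t+1]$ gives \eqref{H^1-0-1}. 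For $\psi_{\rm in}^\varepsilon\in H_0^1(\Omega)$, the classical Gr\"onwall lemma on $[0,t]$, combined with \eqref{L^2-1} and \eqref{L^2-1-1}, gives \eqref{H^1-1} with $M_7$ depending on $\|\psi_{\rm in}^\varepsilon\|_{H^1}$. Here the growth factor $\exp(C\int_0^t\|\psi\|^2\|\nabla\psi\|^2)$ is bounded for $t\le T_1+1$, and the absorbing bound takes over for later times. Integrating once more gives \eqref{H^1-1-1}.

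The case $\varepsilon=0$ is identical. The $\alpha$-dependence enters only through \eqref{L^2-12} and the energy identity for $p^0$, both of which have $\alpha$-independent constants. The computations are justified for the regular solutions of Propositions~\ref{prop1} and \ref{prop1-limit} and extend by density.
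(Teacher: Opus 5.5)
Your argument is correct. It differs from the paper's at the one place that matters, the convective term $(\Bu^\varepsilon\cdot\nabla\psi^\varepsilon,\L^\varepsilon\psi^\varepsilon)$.

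\textbf{The paper's route.} It uses the isotropic Hölder split $L^4\times L^4\times L^2$, which requires an $\varepsilon$-uniform bound on $\|\Bu^\varepsilon\|_{L^4}$. Because $u_x^\varepsilon$ jumps across $z=\pm h_0/2$, this bound is built layer by layer from $\|\nabla\Bu^\varepsilon\|_{L^2(\Omega_{i,\varepsilon})}$, which in turn needs piecewise $H^2$ pressure estimates. In $\Omega_{0,\varepsilon}$ the thin-strip Gagliardo--Nirenberg inequality costs a factor $h_0^{-1/4}$. That factor is compensated by $K_0^{1/2}$ only while $K_0/h_0$ stays bounded; see \eqref{H^1-7}. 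For $\alpha=\infty$ the paper must subtract the reflected field $\Bv^\varepsilon$ and invoke the zero-trace inequality of Lemma~\ref{lem:A2}. It also relies on Lemmas~\ref{lemmaA1}--\ref{lemma-equivalence}, whose independence of the layer thickness is imported from the literature.

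\textbf{Your route.} The mixed-norm split $L^\infty_xL^2_z\times L^2_xL^\infty_z\times L^2$ needs only $\Bu^\varepsilon$ and $\partial_x\Bu^\varepsilon$ in $L^2(\Omega)$, never $\partial_z\Bu^\varepsilon$. These bounds hold globally and uniformly by \eqref{L^2-5-2} and \eqref{L^2-12}, because $K^\varepsilon$ depends only on $z$. On the scalar side you need only two ingredients:
\begin{itemize}
\item the one-dimensional Agmon inequality on the fixed interval $(-H,H)$, applied to $\partial_x\psi^\varepsilon$ and to the continuous flux $D^\varepsilon\partial_z\psi^\varepsilon$;
\item your elementary step (ii).
\end{itemize}
So every $\varepsilon\in[0,1]$ and $\alpha\in[0,\infty]$ is treated at once, with constants that are transparently uniform, and there is no case distinction.

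\textbf{Two small corrections.}
\begin{itemize}
\item Agmon produces lower-order $L^2$ terms. To reach your displayed bound $C\|\psi^\varepsilon\|^{1/2}\|\nabla\psi^\varepsilon\|\,\|\L^\varepsilon\psi^\varepsilon\|^{3/2}$ you must absorb them using Poincaré and $\|\nabla\psi^\varepsilon\|\le C\|\L^\varepsilon\psi^\varepsilon\|$. This is routine, but say it.
\item Your remark that the thin-strip inequality would supply the uniform constant in the $L^\infty_z$ step is misplaced. The interval $(-H,H)$ does not shrink, so Lemma~\ref{lem:A2} plays no role in your proof.
\end{itemize}

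\textbf{What each route buys.} The paper's longer argument yields the uniform velocity bound $\|\Bu^\varepsilon\|_{L^4}\le C\|\psi^\varepsilon\|^{1/2}\|\nabla\psi^\varepsilon\|^{1/2}$, recorded as \eqref{uniform-u-2}. That bound is reused in Proposition~\ref{H^1-x}. The same reflection-plus-thin-strip machinery also reappears in the proof of Theorem~\ref{thm:convergence}, for example in \eqref{convergence-22}. Your argument proves the present proposition more economically, but it does not deliver those $L^4$ velocity estimates for later use.
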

\begin{proof}The proof is divided into three steps.

 {\bf Step 1. The case $\varepsilon>0$ and $\alpha\in [0,\infty)$}. Testing \eqref{Sharps}$_3$ by $\mathcal{L}^\varepsilon \psi^\varepsilon = -\nabla\cdot (D^\varepsilon\nabla\psi^\varepsilon)$ and using integration by parts, we obtain 
\begin{equation}\label{H^1-3}
  \begin{aligned}
&~\frac12  \frac{\dd}{\dd t} \|\sqrt{ D^\varepsilon}\nabla \psi^\varepsilon\|_{L^2(\Omega)}^2 + \|\mathcal{L}^\varepsilon \psi^\varepsilon\|_{L^2(\Omega)}^2 \\
= &~-(\Bu^\varepsilon\cdot\nabla  \psi^\varepsilon,\mathcal{L}^\varepsilon \psi^\varepsilon) -(\phi_b' u_z, \mathcal{L}^\varepsilon \psi^\varepsilon) +  (D^\varepsilon\phi_b'',\mathcal{L}^\varepsilon \psi^\varepsilon)   .
  \end{aligned}
\end{equation}
Using H\"older's inequality, Young's inequality and \eqref{phi_b2}, one has
\begin{equation}\label{H^1-4}
  \begin{aligned}
    |(D^\varepsilon \phi_b'',\mathcal{L}^\varepsilon \psi^\varepsilon)|\leq  &~ \frac{2c_\Delta}{\delta^2}\max_i D_i  \int_{\Omega_{\delta}} | \mathcal{L}^\varepsilon \psi^\varepsilon|  \dd \Bx  \\
    \leq  &~ \frac{2c_\Delta}{\delta^2}\max_i D_i   |\Omega_\delta|^\frac12 \|\mathcal{L}^\varepsilon \psi^\varepsilon\|_{L^2(\Omega_\delta)}  \\
    \leq  &\frac{1}{4}\|\mathcal{L}^\varepsilon \psi^\varepsilon\|_{L^2(\Omega)}^2 + 8c_\Delta^2 L \delta^{-3} (\max_i D_i )^2.
  \end{aligned}
\end{equation}
In a way similar to \eqref{L^2-3}-\eqref{L^2-6}, one may use Young's inequality to deduce
\begin{equation}\label{H^1-5}
  \begin{aligned}
    \left|(\phi_b' u_z^\varepsilon, \mathcal{L}^\varepsilon \psi^\varepsilon)\right|
    \leq  &~  c_\Delta \|\partial_x u_x^\varepsilon\|_{L^2(\Omega)}\|\mathcal{L}^\varepsilon \psi^\varepsilon\|_{L^2(\Omega)}\\
    \leq  &~  2  c_\Delta  \max_i  K_i    \|\partial_x \psi^\varepsilon\|_{L^2(\Omega)}\|\mathcal{L}^\varepsilon \psi^\varepsilon\|_{L^2(\Omega)}\\
    \leq  &~ \frac{1}{4}\|\mathcal{L}^\varepsilon \psi^\varepsilon\|_{L^2(\Omega)}^2+ 4c_\Delta^2 (\max_i  K_i)^2  \|\nabla\psi^\varepsilon\|_{L^2(\Omega)}^2.
  \end{aligned}
\end{equation}
Furthermore, using H\"older's inequality, Young's inequality and Lemmas \ref{lemmaA1}-\eqref{lemma-equivalence}, we have
\begin{equation}\label{H^1-6}
  \begin{aligned}
    \left|(\Bu^\varepsilon\cdot\nabla \psi^\varepsilon, \mathcal{L}^\varepsilon \psi^\varepsilon)\right|
    \leq  &~ \|\Bu^\varepsilon\|_{L^4(\Omega)} \|\nabla \psi^\varepsilon\|_{L^4(\Omega)}  \|\mathcal{L}^\varepsilon  \psi^\varepsilon\|_{L^2(\Omega)}\\
    \leq  &~ C\|\Bu^\varepsilon\|_{L^4(\Omega)} \|\nabla \psi^\varepsilon\|_{L^2(\Omega)}^\frac12 \|\mathcal{L}^\varepsilon  \psi^\varepsilon\|_{L^2(\Omega)}^\frac32\\
    \leq  &~\frac{1}{4}\|\mathcal{L}^\varepsilon  \psi^\varepsilon\|_{L^2(\Omega)}^2+  C\|\nabla \psi^\varepsilon\|_{L^2(\Omega)}^2\sum_{i}\|\Bu^\varepsilon\|_{L^4(\Omega_{i,\varepsilon})}^4 .
  \end{aligned}
\end{equation}
Similar to \eqref{L^2-5-1}, one has also
\begin{equation}\label{H^1-7-0}
  \|\sqrt{K^\varepsilon} \nabla p^\varepsilon\|_{L^2(\Omega )}
  \leq   \| \sqrt{K^\varepsilon}\psi^\varepsilon\|_{L^2(\Omega)}.
\end{equation}
This, together with \eqref{Sharps}$_1$, gives
\begin{equation} \label{H^1-7-1}
  \| \Bu^\varepsilon\|_{L^2(\Omega_{i,\varepsilon})} \leq \| K^\varepsilon\nabla p^\varepsilon\|_{L^2(\Omega_{i,\varepsilon})}+ \| K^\varepsilon\psi^\varepsilon\|_{L^2(\Omega_{i,\varepsilon})} \leq 2\sqrt{K_i}  \| \sqrt{K^\varepsilon}\psi^\varepsilon\|_{L^2(\Omega)},
\end{equation}
for $i=\pm,0$.

On the other hand, since $\psi^\varepsilon \in H_0^1(\Omega)$, the pressure $p^\varepsilon$ is piecewise $H^2$ and satisfies
\[ -\Delta p^\varepsilon = \partial_z\psi^\varepsilon,\quad \text{ in each }\Omega_{i,\varepsilon}.\]
Using \eqref{L^2-5-1}, one has
\begin{equation}\nonumber
  \|\sqrt{K^\varepsilon}\partial_{z}^2 p^\varepsilon\|_{L^2(\Omega_{i,\varepsilon}) }\leq   \|\sqrt{K^\varepsilon}\partial_{x}^2 p^\varepsilon\|_{L^2(\Omega_{i,\varepsilon}) }+ \|\sqrt{K^\varepsilon}\partial_{z}\psi^\varepsilon\|_{L^2(\Omega_{i,\varepsilon}) }
  \leq 2\|\sqrt{K^\varepsilon}\nabla\psi^\varepsilon\|_{L^2(\Omega) }
\end{equation}
and
\begin{equation} \label{H^1-7-2}
  \begin{aligned}
    \|\nabla\Bu^\varepsilon\|_{L^2(\Omega_{i,\varepsilon})} \leq&~
    \|K^\varepsilon\nabla^2 p^\varepsilon\|_{L^2(\Omega_{i,\varepsilon})} +\| K^\varepsilon \nabla \psi^\varepsilon\|_{L^2(\Omega_{i,\varepsilon})}\\
    \leq&~\sqrt{K_i}(\|\sqrt{K^\varepsilon} \nabla^2 p^\varepsilon\|_{L^2(\Omega_{i,\varepsilon})} +\|\sqrt{K^\varepsilon}\nabla \psi^\varepsilon\|_{L^2(\Omega_{i,\varepsilon})})\\
    \leq&~C \sqrt{K_i}\|\sqrt{K^\varepsilon}\nabla \psi^\varepsilon\|_{L^2(\Omega)}.
  \end{aligned}
\end{equation}
By virtue of Lemma \ref{lemmaA1}, Poincar\'e inequality and \eqref{H^1-7-1}-\eqref{H^1-7-2}, one has
\begin{equation}\label{H^1-7}
  \begin{aligned}
    \|\Bu^\varepsilon\|_{L^4(\Omega_{i,\varepsilon})}\leq&~  Ch_i^{-\frac14}( \|\Bu^\varepsilon\|_{L^2(\Omega_{i,\varepsilon})}^\frac12\|\nabla\Bu^\varepsilon\|_{L^2(\Omega_{i,\varepsilon})}^\frac12 + \|\Bu^\varepsilon\|_{L^2(\Omega_{i,\varepsilon})}) \\
    \leq&~C h_i^{-\frac14}K_i^{\frac12}\|\nabla\psi^\varepsilon\|_{L^2(\Omega)}^\frac12\|\psi^\varepsilon\|_{L^2(\Omega)}^\frac12\\
    \leq&~C K_i^{\frac14} \|\nabla\psi^\varepsilon\|_{L^2(\Omega)}^\frac12\|\psi^\varepsilon\|_{L^2(\Omega)}^\frac12.
  \end{aligned}
\end{equation}
Here we used the assumption \eqref{alpha} with $\alpha\in [0,\infty)$. Substituting the estimates \eqref{H^1-4}-\eqref{H^1-6} into \eqref{H^1-3} and using \eqref{H^1-7}, one obtains
\begin{equation}\label{H^1-8}
  \begin{aligned}
    &~ \frac{\dd}{\dd t} \|\sqrt{D^\varepsilon}\nabla \psi^\varepsilon\|_{L^2(\Omega)}^2  + \frac{1}{2} \|\mathcal{L}^\varepsilon \psi^\varepsilon\|_{L^2(\Omega)}^2 \\
    \leq& ~ M_2 + (M_3  + M_4\|\psi^\varepsilon\|_{L^2(\Omega)}^2 \|\sqrt{D^\varepsilon}\nabla \psi^\varepsilon\|_{L^2(\Omega)}^2)\|\sqrt{D^\varepsilon} \nabla \psi^\varepsilon\|_{L^2(\Omega)}^2  ,
  \end{aligned}
\end{equation}
with constants
\begin{equation}\label{M2M3}
  M_2= 16 c_\Delta^2 L \delta^{-3} (\max_i D_i )^2,\quad ~~M_3 = \frac{8 c_\Delta^2(\max_i  K_i)^2   }{ \min_i  D_i},
\end{equation}
and
\begin{equation}\label{M4}
  M_4 = \frac{C\max_i K_i}{(\min_i D_i)^2}.
\end{equation}

Set
\begin{equation}\nonumber
  y(t) := \|\sqrt{D^\varepsilon}\nabla \psi^\varepsilon\|_{L^2(\Omega)}^2, ~~~g(t) := M_3+M_4\|\psi^\varepsilon\|_{L^2(\Omega)}^2 \|\sqrt{D^\varepsilon}\nabla \psi^\varepsilon\|_{L^2(\Omega)}^2,~~~h(t) := M_2.
\end{equation}
Then, for any $t>0$, we have
\begin{equation}\nonumber
  y'(t) \leq g(t) y(t) + h(t) \quad\text{ and} \quad
  \int_{t}^{t+1} h(s)\dd s = M_2.
\end{equation}
Furthermore, if $t\ge T_1$, it follows from Proposition \ref{L^2} that
\begin{equation}\nonumber
  \int_{t}^{t+1} y(s)\dd s = \int_{t}^{t+1} \|\sqrt{D^\varepsilon}\nabla \psi^\varepsilon\|_{L^2(\Omega)}^2 \dd s \leq \frac{M_1H^2 }{\min_i D_i} +M_1+ 1
\end{equation}
and
\begin{equation}\nonumber
  \begin{aligned}
    \int_{t}^{t+1} g(s)\dd s \leq&~ M_3 +M_4\sup_{s\in[t,t+1]} \|\psi^\varepsilon(s)\|_{L^2(\Omega)}^2\int_{t}^{t+1}   \|\sqrt{D^\varepsilon}\nabla \psi^\varepsilon\|_{L^2(\Omega)}^2 \dd s \\
    \leq&~ M_3+ M_4 \left(\frac{M_1H^2}{\min_i D_i}+M_1+ 1\right)^2.
  \end{aligned}
\end{equation}
Then an application of the uniform Gr\"onwall lemma \cite[Lemma 1.1]{Temam1997} gives
\begin{equation}\label{M5}
\begin{aligned}
  y(t+1)  \leq &~ \left(\frac{M_1H^2}{\min_i D_i}+M_2+M_1+1\right) \\
  &\times\operatorname{exp}\left[M_3+M_4\left(\frac{M_1H^2}{\min_i D_i}+M_1 +1\right)^2\right]=:M_5,
\end{aligned}
\end{equation}
for any $t\ge T_1$, which is exactly \eqref{H^1-0}. Integrating \eqref{H^1-8} over $[t,t+1]$ for any $t\ge T_1+1$ and using \eqref{M5}, one has
\begin{equation}\label{M6}
  \begin{aligned}
 &~ \frac{1}{2} \int_t^{t+1}\|\mathcal{L}^\varepsilon \psi^\varepsilon(s)\|_{L^2(\Omega)}^2 \dd s \\ 
    % \leq& ~  M_2 + y(t)+ \sup_{s\in (t,t+1)} y(s)\cdot \int_{t}^{t+1} g(s)\dd s  \\
    \leq&~ M_2 +  M_5\left[1+ M_3+M_4\left(\frac{M_1H^2}{\min_i D_i}+M_1+ 1\right)^2\right]\\
    =&:\frac{1}{2}M_6.
  \end{aligned}
\end{equation}
Next we assume that $\psi_{\rm in}^\varepsilon \in H_0^1(\Omega)$. With the aid of \eqref{L^2-1} and \eqref{L^2-1-1}, one could apply Gr\"onwall's inequality to \eqref{H^1-8} and deduce
\begin{equation}\nonumber
  \begin{aligned}
   y(t) \leq&~ y(0)\exp\left(\int_0^t g(s) \dd s\right)  + \int_0^t h(s)  \exp\left(\int_\tau^t g(s) \dd s\right) \dd \tau\\
    \leq&~ (\|\sqrt{D^\varepsilon}\nabla \psi_{\rm in}^\varepsilon\|_{L^2(\Omega)}^2+  M_2 t)\exp\left(M_3 t +M_4\int_0^t \|\psi^\varepsilon\|_{L^2(\Omega)}^2 \|\sqrt{D^\varepsilon}\nabla \psi^\varepsilon\|_{L^2(\Omega)}^2  \dd s\right)\\
    \leq&~ (\|\sqrt{D^\varepsilon}\nabla \psi_{\rm in}^\varepsilon\|_{L^2(\Omega)}^2+ M_2 t)\\
    &\times\exp\left[M_3 t +M_4\left(\frac{M_1H^2}{\min_i D_i}+\|\psi_{\rm in}^\varepsilon\|_{L^2(\Omega)}^2\right)\left(M_1t + \|\psi_{\rm in}^\varepsilon\|_{L^2(\Omega)}^2\right)\right]\\
    =&:M_5'(t,\|\psi_{\rm in}^\varepsilon\|_{H^1(\Omega)}).
  \end{aligned}
\end{equation}
This, together with the uniform estimate \eqref{M5} in long time, gives
\begin{equation}\label{M7}
  \begin{aligned}
    \|\sqrt{D^\varepsilon}\nabla \psi^\varepsilon(t)\|_{L^2(\Omega)}^2\leq&~   \max\{M_5,M_5'(T_1+1,  \|\psi_{\rm in}^\varepsilon\|_{H^1(\Omega)})\}  \\
    =&:M_7(\|\psi_{\rm in}^\varepsilon\|_{H^1(\Omega)}),
  \end{aligned}
\end{equation}
for any $t\ge 0$. Finally, we integrate \eqref{H^1-8} over $[0,t]$ to obtain 
\begin{equation}\label{M8}
\begin{aligned}
&~\frac12\int_0^t \|\mathcal{L}^\varepsilon\psi^\varepsilon\|_{L^2(\Omega)}^2 \dd s\\
 \leq&~ \|\sqrt{D^\varepsilon}\nabla \psi_{\rm in}^\varepsilon\|_{L^2(\Omega)}^2 +\left[M_2+M_3M_7+M_4 M_7^2\left(\frac{M_1H^2}{\min_iD_i}+\|\psi_{\rm in}^\varepsilon\|_{L^2(\Omega)}^2\right)\right]t\\
=&: \frac12 M_8(t,\|\psi_{\rm in}^\varepsilon\|_{H^1(\Omega)}).
\end{aligned}
\end{equation}
Then we finish the proof for the case $\varepsilon>0$ and $\alpha\in [0,\infty)$.

{\bf Step 2. The case $\varepsilon>0$ and $\alpha=\infty$.} In this case, the estimate \eqref{H^1-7} is not applicable for the intermediate layer $\Omega_{0,\varepsilon}$, since $h_0^{-\frac14} K_0^{\frac12}$ can be unbounded as $\varepsilon\to 0$. However, in the two layers $\Omega_{\pm,\varepsilon}$, the estimate \eqref{H^1-7} is still valid and is uniform in $\varepsilon$. Then we have
\begin{equation}\label{H^1-11}
\|\Bu^\varepsilon\|_{L^4(\Omega_{\pm,\varepsilon})}
\leq
C
\|\psi^\varepsilon\|_{L^2(\Omega)}^{\frac12}
\|\nabla\psi^\varepsilon\|_{L^2(\Omega)}^{\frac12}.
\end{equation}
For the intermediate layer, since $h_0(\varepsilon)\to0$, we may assume that
\[
0<h_0(\varepsilon)\leq\min\left\{1,\frac{H}{2}\right\}, \quad \text{ for any }\varepsilon\in (0,1].
\]
For notational convenience, we write
\[
\Bu^\varepsilon_0
:=
\Bu^\varepsilon\big|_{\Omega_{0,\varepsilon}}\quad \text{ and }\quad
\qquad
\Bu^\varepsilon_+
:=
\Bu^\varepsilon\big|_{\Omega_{+,\varepsilon}}.
\]

Set
\[
Q_{+,\varepsilon}
:=
\mathbb{T}_L
\times
\left(
\frac{h_0}{2},
\frac{3h_0}{2}
\right)
\subset\Omega_{+,\varepsilon}.
\]
Due to the continuity of the normal velocity across the interface, one has
\[
u^\varepsilon_{0,z}
=
u^\varepsilon_{+,z}
\qquad
\text{on }
z=\frac{h_0}{2}.
\]
Moreover, since the pressure $p^\varepsilon$ is continuous across the interface
and is piecewise $H^2$, its tangential derivative is also
continuous. Then Darcy's law  gives
\[
u^\varepsilon_{0,x}
=
-K_0\partial_xp^\varepsilon
=
\frac{K_0}{K_+}u^\varepsilon_{+,x},
\qquad
\text{on }
z=\frac{h_0}{2}.
\]
We define a reflected comparison field
$\Bv^\varepsilon$ in $\Omega_{0,\varepsilon}$ by
\begin{equation}\label{def:v}
\Bv^\varepsilon(x,z)
:=
\left(
\frac{K_0}{K_+}
u^\varepsilon_{+,x}(x,h_0-z),
\,
u^\varepsilon_{+,z}(x,h_0-z)
\right),
\quad\text{ for }(x,z)\in \Omega_{0,\varepsilon}.
\end{equation}
It is straightforward to see that 
\begin{equation}
\label{eq:G-matches-u}
\Bv^\varepsilon
= \Bu^\varepsilon_0
\qquad
\text{on }
z=\frac{h_0}{2}. 
\end{equation}
Moreover, by \eqref{H^1-7-1}-\eqref{H^1-7-2} and \eqref{H^1-11}, one has
\begin{equation}\label{H^1-12}
\|\Bv^\varepsilon\|_{L^2(\Omega_{0,\varepsilon})} \leq
C  
\|\Bu^\varepsilon \|_{L^2(Q_{+,\varepsilon})}
\leq
C
\|\psi^\varepsilon\|_{L^2(\Omega)},
\end{equation}
\begin{equation}\label{H^1-13}
\|\nabla \Bv^\varepsilon\|_{L^2(\Omega_{0,\varepsilon})}\leq
C
\|\nabla \Bu^\varepsilon \|_{L^2(Q_{+,\varepsilon})}
\leq
C
\|\nabla\psi^\varepsilon\|_{L^2(\Omega)}
\end{equation}
and 
\begin{equation}\label{H^1-14}
\|\Bv^\varepsilon\|_{L^4(\Omega_{0,\varepsilon})}
\leq
C
\|\Bu^\varepsilon\|_{L^4(Q_{+,\varepsilon})}
\leq
C
\|\psi^\varepsilon\|_{L^2(\Omega)}^{\frac12}
\|\nabla\psi^\varepsilon\|_{L^2(\Omega)}^{\frac12}.
\end{equation}

Define
\[
\Bw^\varepsilon
:= \Bu^\varepsilon_0-\Bv^\varepsilon
\qquad
\text{in }\Omega_{0,\varepsilon}.
\]
By \eqref{eq:G-matches-u}, one has $\Bw^\varepsilon
\big|_{z=\frac{h_0}{2}}
=0$. On the other hand, by virtue of \eqref{H^1-7-1} and \eqref{H^1-7-2}, we have
\begin{equation}
\|\Bu^\varepsilon_0\|_{L^2(\Omega_{0,\varepsilon})}
\leq
C K_0^{\frac12}
\|\psi^\varepsilon\|_{L^2(\Omega)} 
\label{eq:u0-L2}
\end{equation}
and
\begin{equation}
\|\nabla \Bu^\varepsilon_0\|_{L^2(\Omega_{0,\varepsilon})}
\leq
C K_0^{\frac12}
\|\nabla\psi^\varepsilon\|_{L^2(\Omega)} .
\label{eq:u0-H1}
\end{equation}
Combining \eqref{H^1-12}--\eqref{eq:u0-H1}, we obtain
\begin{equation}\nonumber
\|\Bw^\varepsilon\|_{L^2(\Omega_{0,\varepsilon})}
\leq
C\|\psi^\varepsilon\|_{L^2(\Omega)},
\qquad
\|\nabla \Bw^\varepsilon\|_{L^2(\Omega_{0,\varepsilon})}
\leq
C\|\nabla\psi^\varepsilon\|_{L^2(\Omega)}.\end{equation}
Applying Lemma~\ref{lem:A2} to
$\Bw^\varepsilon$ gives
\begin{equation}\label{eq:w-L4}
\|\Bw^\varepsilon\|_{L^4(\Omega_{0,\varepsilon})}\leq
C
\|\Bw^\varepsilon\|_{L^2(\Omega_{0,\varepsilon})}^{\frac12}
\|\nabla \Bw^\varepsilon\|_{L^2(\Omega_{0,\varepsilon})}^{\frac12}
\leq
C
\|\psi^\varepsilon\|_{L^2(\Omega)}^{\frac12}
\|\nabla\psi^\varepsilon\|_{L^2(\Omega)}^{\frac12}.
\end{equation}
Consequently, by \eqref{H^1-14} and \eqref{eq:w-L4}, one obtains
\begin{equation}\label{H^1-7-3}
\|\Bu^\varepsilon\|_{L^4(\Omega_{0,\varepsilon})}
\leq
C
\|\psi^\varepsilon\|_{L^2(\Omega)}^{\frac12}
\|\nabla\psi^\varepsilon\|_{L^2(\Omega)}^{\frac12}.
\end{equation}
Combining \eqref{H^1-7-3} and \eqref{H^1-11}, one obtains also the estimate given in \eqref{H^1-7}. Then one could repeat the argument in Step 1. to obtain \eqref{H^1-0}--\eqref{H^1-0-1} for the case $\varepsilon>0$ and $\alpha=\infty$.

{\bf Step 3. The case $\varepsilon=0$.} The case $\varepsilon=0$ and $\alpha=\infty$ can be proved in the same way. The proof for $\varepsilon=0$ and $\alpha\in[0,\infty)$ is quite similar to that in Step 1. if one uses the estimate \eqref{pressure-limit-est} for the pressure transmission problem \eqref{pressure-limit} instead of the estimate \eqref{H^1-7-0} for the Neumann boundary problem \eqref{pressure}, so we omit the details for brevity. Then we finish the proof of this proposition.
\end{proof}

\begin{proposition}\label{H^1-x}
Let $\delta$ be chosen so that \eqref{delta_2} is satisfied and $\alpha\in[0,\infty]$. 
Then for any $\varepsilon\in[0,1]$, and any initial data $\psi_{\rm in}^\varepsilon\in L^2(\Omega)$, the corresponding solution $(\Bu^\varepsilon, p^\varepsilon, \psi^\varepsilon)$ to problem \eqref{Sharps}--\eqref{bc} (with $\varepsilon>0$) or its limit problem  \eqref{Sharps-limit}, \eqref{initial-limit}-\eqref{interface1-limit} (with $\varepsilon=0$)  satisfies 
\begin{equation} \label{H^1-x-0}
\|\sqrt{D^\varepsilon}\nabla\partial_x\psi^\varepsilon(t)\|_{L^2(\Omega)}^2\leq  M_9, \qquad\text{for any } t\ge T_1+2,
\end{equation}
where $T_1= T_1(\|\psi^\varepsilon_{\rm in}\|_{L^2(\Omega)})$ and the constant $M_9$ given in \eqref{M9} are independent of $\varepsilon$ and the initial data.  

If, in addition, $\psi^\varepsilon_{\rm in}$ and $\partial_x\psi^\varepsilon_{\rm in}$ belong to $H_0^1(\Omega)$, then it holds that 
\begin{equation} \label{H^1-x-0-1}
\|\sqrt{D^\varepsilon}\nabla\partial_x\psi^\varepsilon(t)\|_{L^2(\Omega)}^2\leq  M_{10}(\|\psi_{\rm in}^\varepsilon\|_{H^1(\Omega)},\|\partial_x\psi_{\rm in}^\varepsilon\|_{H^1(\Omega)}), \qquad\text{for any } t\ge 0,
\end{equation}
where $M_{10}$ given in \eqref{M10} is a constant independent of $\varepsilon$.
\end{proposition}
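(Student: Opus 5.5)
We differentiate the scalar equation in the horizontal direction and repeat the energy argument of Proposition~\ref{H^1} at the level of $\theta^\varepsilon:=\partial_x\psi^\varepsilon$. Since $K^\varepsilon$, $D^\varepsilon$, $\phi_b$ depend only on $z$, the function $\theta^\varepsilon$ satisfies
\[
\partial_t\theta^\varepsilon+\Bu^\varepsilon\cdot\nabla\theta^\varepsilon+\partial_x\Bu^\varepsilon\cdot\nabla\psi^\varepsilon+\phi_b'\partial_xu_z^\varepsilon+\L^\varepsilon\theta^\varepsilon=0,
\]
with $\theta^\varepsilon=0$ on $z=\pm H$ and the same interface conditions as $\psi^\varepsilon$. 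Moreover, $\partial_x\Bu^\varepsilon=-K^\varepsilon(\nabla\partial_xp^\varepsilon+\theta^\varepsilon\Be_z)$, where $\partial_xp^\varepsilon$ solves \eqref{pressure_x} (respectively \eqref{pressure-limit-x} when $\varepsilon=0$, $\alpha<\infty$). The equation for $\theta^\varepsilon$ therefore has the same structure as that for $\psi^\varepsilon$, with the forcing $D\phi_b''$ removed and one extra term $\partial_x\Bu^\varepsilon\cdot\nabla\psi^\varepsilon$.

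\textbf{Step 1: the energy inequality.} We test with $\L^\varepsilon\theta^\varepsilon$, justified by a Galerkin approximation or by the regularity in Proposition~\ref{prop1} and Proposition~\ref{prop1-limit}. This gives
\[
\tfrac12\tfrac{\dd}{\dd t}\|\sqrt{D^\varepsilon}\nabla\theta^\varepsilon\|^2+\|\L^\varepsilon\theta^\varepsilon\|^2=-(\Bu^\varepsilon\cdot\nabla\theta^\varepsilon,\L^\varepsilon\theta^\varepsilon)-(\partial_x\Bu^\varepsilon\cdot\nabla\psi^\varepsilon,\L^\varepsilon\theta^\varepsilon)-(\phi_b'\partial_xu_z^\varepsilon,\L^\varepsilon\theta^\varepsilon).
\]
We bound the three terms as follows.
\begin{enumerate}[(i)]
\item The $\phi_b'$ term is handled as in \eqref{H^1-5}, using $\|\partial_x^2u_x^\varepsilon\|\le 2\max_iK_i\|\partial_x\theta^\varepsilon\|$, which follows from the $x$-differentiated version of \eqref{L^2-5-1} or \eqref{L^2-12}.
\item The first convective term is treated exactly as in \eqref{H^1-6}. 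It is bounded by $\tfrac14\|\L^\varepsilon\theta^\varepsilon\|^2+C\|\nabla\theta^\varepsilon\|^2\|\Bu^\varepsilon\|_{L^4}^4$, and \eqref{H^1-7} (or its $\alpha=\infty$ replacement \eqref{H^1-7-3}) gives $\|\Bu^\varepsilon\|_{L^4}^4\le C\|\psi^\varepsilon\|^2\|\nabla\psi^\varepsilon\|^2$.
\item For the new term we use $\|\partial_x\Bu^\varepsilon\|_{L^4}\|\nabla\psi^\varepsilon\|_{L^4}\|\L^\varepsilon\theta^\varepsilon\|$. Applying \eqref{H^1-7} to $\partial_x\Bu^\varepsilon$, with $\partial_xp^\varepsilon$ in place of $p^\varepsilon$, gives $\|\partial_x\Bu^\varepsilon\|_{L^4}\le C\|\theta^\varepsilon\|^{1/2}\|\nabla\theta^\varepsilon\|^{1/2}$. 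Lemma~\ref{lemmaA1} together with the equivalence lemma gives $\|\nabla\psi^\varepsilon\|_{L^4}\le C\|\nabla\psi^\varepsilon\|^{1/2}\|\L^\varepsilon\psi^\varepsilon\|^{1/2}$.
\end{enumerate}
By Young's inequality, these bounds yield
\[
y'+\tfrac12\|\L^\varepsilon\theta^\varepsilon\|^2\le g(t)\,y,\qquad y:=\|\sqrt{D^\varepsilon}\nabla\theta^\varepsilon\|^2,
\]
with $g=C\bigl(1+\|\psi^\varepsilon\|^2\|\nabla\psi^\varepsilon\|^2+\|\nabla\psi^\varepsilon\|\,\|\L^\varepsilon\psi^\varepsilon\|\bigr)$. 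Here we absorbed $\|\theta^\varepsilon\|\le\|\nabla\psi^\varepsilon\|$ and $\|\nabla\theta^\varepsilon\|^2\lesssim y$.

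\textbf{Step 2: closing the estimate.} For $t\ge T_1+1$, Propositions~\ref{L^2} and \ref{H^1} give $\int_t^{t+1}g\le C(1+M_5^{1/2}M_6^{1/2}+\cdots)$, uniformly in $\varepsilon$. They also give $\int_t^{t+1}y\le C\int_t^{t+1}\|\L^\varepsilon\psi^\varepsilon\|^2\le CM_6$, since $\|\nabla\partial_x\psi\|\lesssim\|\L^\varepsilon\psi\|$ by the equivalence lemma. The uniform Gr\"onwall lemma then yields \eqref{H^1-x-0} for $t\ge T_1+2$, with $M_9$ explicit. For data with $\psi_{\rm in},\partial_x\psi_{\rm in}\in H_0^1$, the classical Gr\"onwall inequality on $[0,T_1+2]$ uses \eqref{H^1-1}--\eqref{H^1-1-1}. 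Taking the maximum with $M_9$ gives $M_{10}$.

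\textbf{Main obstacle.} The expected obstacle is the uniform $L^4$ bound on $\partial_x\Bu^\varepsilon$ inside the collapsing layer when $\alpha=\infty$, where $h_0^{-1/4}K_0^{1/2}$ is unbounded. We would repeat Step~2 of Proposition~\ref{H^1} verbatim for $\partial_x\Bu^\varepsilon$. The reflected comparison field is built from $\partial_x\Bu^\varepsilon_+$ as in \eqref{def:v}, and the matching on $z=h_0/2$ still holds because $\partial_xp^\varepsilon$ is continuous with continuous tangential derivative. The thin-strip Lemma~\ref{lem:A2} is then applied to the difference. This requires $\partial_x\Bu^\varepsilon\in H^1$ piecewise, bounded by $\|\nabla\theta^\varepsilon\|$, and this follows from piecewise $H^2$ regularity of $\partial_xp^\varepsilon$ and the bound $\|\sqrt{K}\partial_z^2\partial_xp\|\le 2\|\sqrt K\nabla\theta\|$ obtained as before. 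In the limit case $\varepsilon=0$, $\alpha<\infty$, the same bounds follow from the $x$-differentiated transmission problem and \eqref{pressure-limit-est}.
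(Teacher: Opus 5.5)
Your proposal is correct and follows the paper's proof in its essentials. Like the paper, you differentiate in $x$, test with $\L^\varepsilon\partial_x\psi^\varepsilon$, estimate the same three terms (redoing the reflected comparison field of Proposition~\ref{H^1} for $\partial_x\Bu^\varepsilon$ when $\alpha=\infty$), and close with the uniform Gr\"onwall lemma.

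The differences are only streamlinings:
\begin{enumerate}[(i)]
\item You bound $\int_t^{t+1}\|\sqrt{D^\varepsilon}\nabla\partial_x\psi^\varepsilon\|_{L^2}^2\,\dd s\le CM_6$ directly via Lemma~\ref{lemma-equivalence}. This makes the paper's separate $L^2$ energy estimate for $\partial_x\psi^\varepsilon$ (its Step~1) unnecessary.
\item You put the stretching term $(\partial_x\Bu^\varepsilon\cdot\nabla\psi^\varepsilon,\L^\varepsilon\partial_x\psi^\varepsilon)$ into $g\,y$, whereas the paper puts it into a forcing term $\|\nabla\psi^\varepsilon\|^4\|\L^\varepsilon\psi^\varepsilon\|^2$. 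To obtain your stated $g$ you need Poincar\'e, $\|\theta^\varepsilon\|\le C\|\nabla\theta^\varepsilon\|$, not $\|\theta^\varepsilon\|\le\|\nabla\psi^\varepsilon\|$ as you wrote. The latter would leave an extra inhomogeneous term, which is harmless.
\end{enumerate}
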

\begin{proof} The proof is divided into three steps.  We first prove the uniform estimate for $\varepsilon>0$ and then sketch the proof for $\varepsilon=0$ at the end.

{\bf Step 1. $L^2$-estimate for $\partial_x\psi^\varepsilon$.} For $L^2$ initial data, the differentiated estimates below are understood via horizontal difference quotients or Galerkin approximations, starting at $t_0=T_1+1$, where Proposition \eqref{H^1} provides the required uniform $H^1$ regularization. Passing to the limit justifies the equations for $\partial_x\psi^\varepsilon$.
Set 
\[(\Psi^\varepsilon,\BU^\varepsilon, P^\varepsilon):= \partial_x(\psi^\varepsilon, u^\varepsilon, p^\varepsilon).\]
Then $(\Psi^\varepsilon,\BU^\varepsilon, P^\varepsilon)$ satisfies the following problem 
\begin{equation}\label{Sharps-x}
  \left\{
    \begin{aligned}
&\BU^\varepsilon=- K^\varepsilon\left(\nabla P^\varepsilon + \Psi^\varepsilon \Be_{z}\right),\\
&\nabla \cdot \BU^\varepsilon =0,\\
& \partial_t \Psi^\varepsilon + \Bu^\varepsilon\cdot\nabla \Psi^\varepsilon + \BU^\varepsilon\cdot\nabla \psi^\varepsilon  +  \phi_b'\BU^\varepsilon \cdot \Be_z   - \nabla\cdot(D^\varepsilon\nabla\Psi^\varepsilon)=0
\end{aligned}\right.
\end{equation}
and the boundary  conditions
\begin{equation}
\BU^\varepsilon\cdot\Bn|_{z=\pm H}= \Psi^\varepsilon|_{z=\pm H}= 0.
\end{equation}
The interface conditions for $\Psi^\varepsilon,\BU^\varepsilon, P^\varepsilon$ are the same as those for $\psi^\varepsilon, u^\varepsilon, p^\varepsilon$, respectively. 
% the initial conditions
% \begin{equation}
%   \Psi^\varepsilon|_{t=T_1+1}= \partial_x\psi^\varepsilon|_{t=T_1+1}, 
% \end{equation}
% and 
% Differentiating \eqref{eq:Psieq}$_3$ with respect to $x$ (recall that $\phi_b$ and $D^\varepsilon$ do not depend on $x$) yields
% \begin{equation}\label{eq:Psieq}
% \partial_t\Psi^\varepsilon + u^\varepsilon\cdot\nabla\Psi^\varepsilon + U^\varepsilon\cdot\nabla\psi^\varepsilon + \phi_b'U^\varepsilon_z = \nabla\cdot(D^\varepsilon\nabla\Psi^\varepsilon),
% \end{equation}
% supplemented with homogeneous Dirichlet boundary conditions at $z=\pm H$ and continuity of $\Psi^\varepsilon$ and the diffusive flux at the interfaces.
% Take $t_0:=T_1+1$; by \eqref{eq:P42} we have $\Psi^\varepsilon(t_0)\in L^2(\Omega)$ with $\|\Psi^\varepsilon(t_0)\|_{L^2}\le C_1$. 
Testing \eqref{Sharps-x} with $\Psi^\varepsilon$ and using integration by parts, we obtain
\begin{equation}\label{H^1-x-5}
\frac12\frac{\dd }{\dd t}\|\Psi^\varepsilon\|_{L^2(\Omega)}^2 + \|\sqrt{D^\varepsilon}\nabla\Psi^\varepsilon\|_{L^2(\Omega)}^2
=    (\BU^\varepsilon\cdot\nabla \Psi^\varepsilon, \psi^\varepsilon) - (\phi_b'U^\varepsilon_z,\Psi^\varepsilon). 
\end{equation}
Similar to \eqref{L^2-6}, one also has 
\begin{equation}\label{H^1-x-6}
  |(\phi_b'U^\varepsilon_z,\Psi^\varepsilon)| \le 2c_\Delta \delta \max_i  K_i    \|\nabla \Psi^\varepsilon\|_{L^2(\Omega)}^2 \leq \frac{1}{4}\min_i  D_i \|\nabla \Psi^\varepsilon\|_{L^2(\Omega)}^2 .
\end{equation}
Moreover, similar to \eqref{H^1-7-1}-\eqref{H^1-7} and \eqref{H^1-7-3} , it holds that 
\begin{equation}\label{H^1-x-7-1}
\|\BU^\varepsilon\|_{L^2(\Omega_{i,\varepsilon})} \leq CK_i^\frac12\| \Psi^\varepsilon\|_{L^2(\Omega)},\quad \|\nabla \BU^\varepsilon\|_{L^2(\Omega_{i,\varepsilon})} \leq CK_i^\frac12\|\nabla\Psi^\varepsilon\|_{L^2(\Omega)}
\end{equation}
and
\begin{equation}\label{H^1-x-7}
\|\BU^\varepsilon\|_{L^4(\Omega_{i,\varepsilon})} \leq C\|\nabla\Psi^\varepsilon\|_{L^2(\Omega)}^\frac12\|\Psi^\varepsilon\|_{L^2(\Omega)}^\frac12 ,
\end{equation}
By Lemma \ref{lemmaA1}, Young's inequality and \eqref{H^1-x-7}, one has 
\begin{equation}\label{H^1-x-8}
\begin{aligned}
| (\BU^\varepsilon\cdot\nabla \Psi^\varepsilon, \psi^\varepsilon) | \leq&~ \|\nabla\Psi^\varepsilon\|_{L^2(\Omega)}\|\BU^\varepsilon\|_{L^4(\Omega)} \|\psi^\varepsilon\|_{L^4(\Omega)}  \\
\leq&~ C \|\nabla\Psi^\varepsilon\|_{L^2(\Omega)} ^\frac32 \| \Psi^\varepsilon\|_{L^2(\Omega)}^\frac12 \|\psi^\varepsilon\|_{L^2(\Omega)}^{\frac12}\|\nabla \psi^\varepsilon\|_{L^2(\Omega)} ^\frac12\\ 
\leq&~ \frac14\min_i D_i\|\nabla\Psi^\varepsilon\|_{L^2(\Omega)}^2 + C\|\Psi^\varepsilon\|_{L^2(\Omega)}^2 \|\psi^\varepsilon\|_{L^2(\Omega)}^2\|\nabla \psi^\varepsilon\|_{L^2(\Omega)} ^2\\
\leq&~ \frac14\min_i D_i\|\nabla\Psi^\varepsilon\|_{L^2(\Omega)}^2 + C \|\psi^\varepsilon\|_{L^2(\Omega)}^2\|\nabla \psi^\varepsilon\|_{L^2(\Omega)} ^4.
\end{aligned}
\end{equation}
Inserting \eqref{H^1-x-6} and \eqref{H^1-x-8} into \eqref{H^1-x-5}, we arrive at
\begin{equation}\label{H^1-x-9}
\frac{\dd}{\dd t}\|\Psi^\varepsilon\|_{L^2}^2 + \|\sqrt{D^\varepsilon}\nabla\Psi^\varepsilon\|_{L^2}^2 \le C \|\psi^\varepsilon\|_{L^2(\Omega)}^2\|\nabla \psi^\varepsilon\|_{L^2(\Omega)} ^4 .
\end{equation}
% This, together with the uniform estimates give in Proposition \ref{L^2}-\ref{H^1}, yields 
% \begin{equation}\label{eq:PsiL2final}
% \frac{\dd}{\dd t}\|\Psi^\varepsilon\|_{L^2}^2 + \|\sqrt{D^\varepsilon}\nabla\Psi^\varepsilon\|_{L^2}^2  \leq C M_5^2\left(\frac{M_1H^2}{\min_i D_i}+1\right),\quad \text{ for any }t\ge T_1+1.
% \end{equation}
Integrating \eqref{H^1-x-9} from $t$ to $t+1$ for any $t\ge T_1+1$ and using the uniform estimates given in Propositions \ref{L^2}-\ref{H^1} gives
\begin{equation}\label{eq:Psiint}
\begin{aligned}
  &~\int_t^{t+1} \|\sqrt{D^\varepsilon}\nabla\Psi^\varepsilon(s)\|_{L^2}^2\dd s \\
  \le &~ C\int_t^{t+1}\|\psi^\varepsilon(s)\|_{L^2(\Omega)}^2\|\nabla \psi^\varepsilon(s)\|_{L^2(\Omega)} ^4 + \|\partial_x \psi^\varepsilon(t)\|_{L^2}^2  \\
  \le&~  C M_5^2\left(\frac{M_1H^2}{\min_i D_i}+1\right) .
\end{aligned}
\end{equation}
 
{\bf Step 2. $H^1$ estimate for $\partial_x\psi^\varepsilon$.}
Testing \eqref{Sharps-x}$_3$ with $\L^\varepsilon\Psi^\varepsilon$ and applying integration by parts, we obtain
\begin{equation}\label{eq:PsiH2}
\begin{aligned} 
&~\frac12\frac{\dd}{\dd t}\|\sqrt{D^\varepsilon}\nabla\Psi^\varepsilon\|_{L^2(\Omega)}^2 + \|\L^\varepsilon\Psi^\varepsilon\|_{L^2(\Omega)}^2\\
= & -(\Bu^\varepsilon\cdot\nabla\Psi^\varepsilon, \L^\varepsilon\Psi^\varepsilon) 
   - (\BU^\varepsilon\cdot\nabla\psi^\varepsilon, \L^\varepsilon\Psi^\varepsilon)- (\phi_b'U^\varepsilon_z, \L^\varepsilon\Psi^\varepsilon).
\end{aligned}
\end{equation}
Now we estimate each term in \eqref{eq:PsiH2}. By \eqref{H^1-7} and Lemmas \ref{lemmaA1}-\ref{lemma-equivalence}, one has 
\begin{equation} 
\begin{aligned}
|(\Bu^\varepsilon\cdot\nabla\Psi^\varepsilon, \L^\varepsilon\Psi^\varepsilon) | \leq&~ \|\Bu^\varepsilon\|_{L^4}\|\nabla\Psi^\varepsilon\|_{L^4}\|\L^\varepsilon\Psi^\varepsilon\|_{L^2} \\
      \leq&~ C\|\nabla\psi^\varepsilon\|_{L^2(\Omega)} ^\frac12\|\psi^\varepsilon\|_{L^2(\Omega)} ^\frac12 \|\nabla\Psi^\varepsilon\|_{L^2}^{\frac12} \|\L^\varepsilon\Psi^\varepsilon\|_{L^2}^\frac32 \\
      \leq&~ \frac18\|\L^\varepsilon\Psi^\varepsilon\|_{L^2}^2 + C\|\nabla\Psi^\varepsilon\|_{L^2}^2 \|\nabla\psi^\varepsilon\|_{L^2(\Omega)} ^2\|\psi^\varepsilon\|_{L^2(\Omega)} ^2.
\end{aligned}
\end{equation}
Similarly, by \eqref{H^1-x-7} and Lemmas \ref{lemmaA1}-\ref{lemma-equivalence}, we have
\begin{equation}\nonumber
\begin{aligned}
| (\BU^\varepsilon\cdot\nabla\psi^\varepsilon, \L^\varepsilon\Psi^\varepsilon) | \leq &~ \| \BU^\varepsilon\|_{L^4} \|\nabla\psi^\varepsilon\|_{L^4} \|\L^\varepsilon\Psi^\varepsilon\|_{L^2}\\
\leq&~\| \Psi^\varepsilon\|_{L^2} ^\frac12\|\nabla \Psi^\varepsilon\|_{L^2} ^\frac12 \| \nabla \psi^\varepsilon\|_{L^2} ^\frac12 \|\L^\varepsilon\psi^\varepsilon\|_{L^2}^\frac12 \|\L^\varepsilon\Psi^\varepsilon\|_{L^2} \\
\leq&~\|\nabla \Psi^\varepsilon\|_{L^2} ^\frac12 \| \nabla \psi^\varepsilon\|_{L^2} \|\L^\varepsilon\psi^\varepsilon\|_{L^2}^\frac12 \|\L^\varepsilon\Psi^\varepsilon\|_{L^2} \\
% \leq&~ \frac18\|\L^\varepsilon\Psi^\varepsilon\|_{L^2}^2 + C \|\nabla\Psi^\varepsilon\|_{L^2} \|\nabla\psi^\varepsilon\|_{L^2} ^2\|\L^\varepsilon\psi^\varepsilon\|_{L^2}\\
\leq&~ \frac18\|\L^\varepsilon\Psi^\varepsilon\|_{L^2}^2 + C \|\nabla\Psi^\varepsilon\|_{L^2}^2+  \|\nabla\psi^\varepsilon\|_{L^2} ^4\|\L^\varepsilon\psi^\varepsilon\|_{L^2}^2.
\end{aligned}
\end{equation}
The estimate for the last term is the same as \eqref{H^1-5} in the proof of Proposition \ref{H^1},
\begin{equation}  
    \left|(\phi_b' U_z^\varepsilon, \mathcal{L}^\varepsilon \Psi^\varepsilon)\right| 
    \leq   \frac{1}{4}\|\mathcal{L}^\varepsilon \Psi^\varepsilon\|_{L^2}^2+ C \|\nabla\Psi^\varepsilon\|_{L^2}^2. 
\end{equation}
Substituting these estimates into \eqref{eq:PsiH2} yields the differential inequality
\begin{equation}\label{eq:PsiH2diff}
\frac{\dd}{\dd t} Y(t) +\frac12 \|\L^\varepsilon\Psi^\varepsilon\|_{L^2}^2 \le G(t) Y(t) + H(t), 
\end{equation}
where we have set
\begin{align*}
Y(t) &:= \|\sqrt{D^\varepsilon}\nabla\Psi^\varepsilon(t)\|_{L^2}^2,\\
G(t) &:= C(1 +  \|\nabla\psi^\varepsilon\|_{L^2} ^2\|\psi^\varepsilon\|_{L^2(\Omega)} ^2),\\
H(t) &:= \|\nabla\psi^\varepsilon\|_{L^2} ^4\|\L^\varepsilon\psi^\varepsilon\|_{L^2}^2.
\end{align*}
In view of Propositions \ref{L^2}-\ref{H^1} , we have the uniform integral bounds
\begin{equation}\label{eq:ghbds}
\int_t^{t+1} G(s)\dd s \le C +CM_5\left(\frac{M_1H^2}{\min_i D_i}+1\right) ,\qquad \int_t^{t+1} H(s)\dd s \le C M_5^2M_6, 
\end{equation}
for all $t\ge T_1+1$. Recall also from \eqref{eq:Psiint} and \eqref{eq:ghbds} that 
\[ \int_t^{t+1} Y(s)\dd s\le C M_5^2\left(\frac{M_1H^2}{\min_i D_i}+1\right) \text{  for any }t\ge T_1+1.\] Applying the uniform Gr\"onwall lemma (\cite{Temam1997}) to \eqref{eq:PsiH2diff} on the interval, we conclude that for any $t\ge T_1+1$,
\begin{equation}\label{M9}
\begin{aligned}
Y(t+1)
 \le&~ \left[C M_5^2\left(\frac{M_1H^2}{\min_i D_i}+1\right) + M_5^2M_6\right] \\
& \times \exp\left[C +CM_5\left(\frac{M_1H^2}{\min_i D_i}+1\right)\right]
=: M_9.
\end{aligned}
\end{equation}
Then we obtain \eqref{H^1-x-0}.   

{\bf Step 3. Global-in-time estimates of $\partial_x\psi^\varepsilon$.}
Finally, if $\psi^\varepsilon_{\rm in},\partial_x\psi^\varepsilon_{\rm in}\in H_0^1(\Omega)$,  one can apply Gr\"onwall inequality to the differential inequality \eqref{eq:PsiH2diff} on $[0,t]$ to obtain 
\begin{equation}\nonumber
\begin{aligned}
 Y(t) \leq&~ \left(Y(0) + \int_0^t H(s)\dd s\right)\exp\left(\int_0^t G(s) \dd s\right)   \\
       \leq&~ \left(\|\sqrt{D^\varepsilon}\nabla \partial_x \psi_{\rm in}^\varepsilon\|_{L^2(\Omega)}^2+ \int_0^t  \|\nabla\psi^\varepsilon\|_{L^2} ^4\|\L^\varepsilon\psi^\varepsilon\|_{L^2}^2 \dd s\right)\\
       &\times \exp\left(\int_0^t C(1 +  \|\nabla\psi^\varepsilon\|_{L^2(\Omega)} ^2\|\psi^\varepsilon\|_{L^2(\Omega)} ^2) \dd s\right)   \\ 
    \leq&~ (\|\sqrt{D^\varepsilon}\nabla \partial_x \psi_{\rm in}^\varepsilon\|_{L^2(\Omega)}^2+ M_7^2M_8)\\
       &\times \exp\left[Ct +C\left(\frac{M_1H^2}{\min_i D_i}+\|\psi_{\rm in}^\varepsilon\|_{L^2(\Omega)}^2\right)\left(M_1t + \|\psi_{\rm in}^\varepsilon\|_{L^2(\Omega)}^2\right)\right] \\
    =&:M_{10}'(t,\|\psi_{\rm in}^\varepsilon\|_{H^1(\Omega)},\|\partial_x\psi_{\rm in}^\varepsilon\|_{H^1(\Omega)}).
\end{aligned}
\end{equation}
This, together with the long time estimate \eqref{M9}, gives \eqref{H^1-x-0-1} with 
\begin{equation}\label{M10}
  M_{10}:= \max\{M_9, M_{10}'(T_1+2,\|\psi_{\rm in}^\varepsilon\|_{H^1(\Omega)},\|\partial_x\psi_{\rm in}^\varepsilon\|_{H^1(\Omega)})\}.
\end{equation}

The case $\varepsilon=0$ and $\alpha=\infty$ can be proved in the same way. The proof for the case $\varepsilon=0$ and $\alpha\in [0,\infty)$ is similar since the estimates for $\BU^0$ and $P^0$ follows from the estimate \eqref{pressure-limit-est} for the pressure transmission problem \eqref{pressure-limit}. So we omit the details and finish the proof of this proposition.
\end{proof}

The following proposition gives the uniform-in-$\varepsilon$ estimates for $p^\varepsilon$ and $\Bu^\varepsilon$.
\begin{proposition}\label{uniform-p}
  Let $\delta$ be chosen so that \eqref{delta_2} is satisfied and $\alpha\in[0,\infty]$. 
For any $\varepsilon\in (0,1]$, let $(\Bu^\varepsilon,\psi^\varepsilon,p^\varepsilon)$ be a solution to the problem \eqref{Sharps}-\eqref{bc}. Then there exists a constant $C$ independent of $\varepsilon$, such that
\begin{equation}\label{uniform-u-1}
  \|\Bu^\varepsilon\|_{L^2(\Omega_{i,\varepsilon})}   \leq   C K_i^\frac12\|\psi^\varepsilon\|_{L^2(\Omega)},\quad
  \|\nabla\Bu^\varepsilon\|_{L^2(\Omega_{i,\varepsilon})}   \leq   CK_i^\frac12\|\nabla \psi^\varepsilon\|_{L^2(\Omega)}  
\end{equation}
and 
\begin{equation}\label{uniform-u-2}
  \|\Bu^\varepsilon\|_{L^4(\Omega_{i,\varepsilon})}   \leq   C \|\psi^\varepsilon\|_{L^2(\Omega)}^\frac12\|\nabla \psi^\varepsilon\|_{L^2(\Omega)}^\frac12, 
\end{equation}
for $i=+,-,0$. Furthermore, we have
\begin{equation}\label{uniform-p-1}
  \|p^\varepsilon\|_{H^1(\Omega_{\pm,\varepsilon})}   \leq   C\|\psi^\varepsilon\|_{L^2(\Omega)},\quad   \| p^\varepsilon\|_{H^2(\Omega_{\pm,\varepsilon})}   \leq   C \|\nabla\psi^\varepsilon\|_{L^2(\Omega)}.
\end{equation}   
% \begin{equation}\label{uniform-p-1-1}
%   \|p^\varepsilon\|_{L^2(\Omega_{0,\varepsilon})}   \leq   Ch_0^\frac12 \|\psi^\varepsilon\|_{L^2(\Omega)},
% \end{equation} 
% \begin{equation}\label{uniform-p-1-2}
 
% \end{equation}
% and
% \begin{equation}\label{uniform-p-1-3}
%   \|\partial_x p^\varepsilon\|_{L^2(\Omega_{0,\varepsilon})}   \leq    Ch_0^\frac12 \|\partial_x\psi^\varepsilon\|_{L^2(\Omega)}.
% \end{equation}
Finally, for the solution  $(\Bu^0,\psi^0,p^0)$ to the limit problem \eqref{Sharps-limit}, \eqref{initial-limit}-\eqref{interface1-limit}, we have 
\begin{equation}\label{uniform-limit-p-1}
  \|\Bu^0\|_{L^2(\Omega_{\pm})}   \leq   C\|\psi^0\|_{L^2(\Omega)},\quad
  \|\nabla\Bu^0\|_{L^2(\Omega_{\pm})}   \leq   C\|\nabla \psi^0\|_{L^2(\Omega)},
\end{equation} 
\begin{equation}\label{uniform-limit-p-2}
  \|\Bu^0\|_{L^4(\Omega_{\pm})}   \leq   C \|\nabla \psi^0\|_{L^2(\Omega)}^\frac12\|\psi^0\|_{L^2(\Omega)}^\frac12
\end{equation}  
and 
\begin{equation}\label{uniform-limit-p-4}
   \|p^0\|_{H^1(\Omega_{\pm})}   \leq   C\|\psi^0\|_{L^2(\Omega)},\quad \| p^0\|_{H^2(\Omega_{\pm})}   \leq   C \|\nabla\psi^0\|_{L^2(\Omega)}.
\end{equation} 
\end{proposition}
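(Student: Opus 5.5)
The velocity bounds \eqref{uniform-u-1}--\eqref{uniform-u-2} were essentially established inside the proof of Proposition~\ref{H^1}, so I would mostly collect them. Testing \eqref{weakform-p} with $q=p^\varepsilon$ gives \eqref{H^1-7-0}, and Darcy's law then yields \eqref{H^1-7-1}, which is the first bound in \eqref{uniform-u-1}. For the gradient bound I would use \eqref{L^2-5-1} for $\partial_x\nabla p^\varepsilon$. In each layer $-\Delta p^\varepsilon=\partial_z\psi^\varepsilon$, which controls $\partial_z^2p^\varepsilon$ and gives \eqref{H^1-7-2}. For \eqref{uniform-u-2} I would split into two cases:
\begin{itemize}
\item in the outer layers, and in $\Omega_{0,\varepsilon}$ when $\alpha<\infty$, apply Lemma~\ref{lemmaA1} as in \eqref{H^1-7}, using $h_0^{-1/4}K_0^{1/2}\le C$;
\item in $\Omega_{0,\varepsilon}$ when $\alpha=\infty$, use the reflected comparison field $\Bv^\varepsilon$ and Lemma~\ref{lem:A2} for $\Bw^\varepsilon=\Bu^\varepsilon_0-\Bv^\varepsilon$, exactly as in \eqref{H^1-12}--\eqref{H^1-7-3}.
\end{itemize}

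For the pressure in the outer layers, the gradient part of \eqref{uniform-p-1} is immediate: \eqref{H^1-7-0} gives $\|\nabla p^\varepsilon\|_{L^2(\Omega_{\pm,\varepsilon})}\le (K_{\max}/K_\pm)^{1/2}\|\psi^\varepsilon\|_{L^2}$, since $K_\pm$ are fixed. The second-order part follows by combining \eqref{L^2-5-1} (restricted to $\Omega_{\pm,\varepsilon}$, with $K_\pm$ fixed) with $\partial_z^2p^\varepsilon=-\partial_x^2p^\varepsilon-\partial_z\psi^\varepsilon$.

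The genuine difficulty is the $L^2$ part of $\|p^\varepsilon\|_{H^1(\Omega_{\pm,\varepsilon})}$. Only $\int_\Omega p^\varepsilon=0$ is known, while $\|\nabla p^\varepsilon\|_{L^2(\Omega_{0,\varepsilon})}$ may blow up like $K_0^{-1/2}$, so a global Poincar\'e inequality is not uniform. I would argue through horizontal averages, in four steps.
\begin{enumerate}
\item Write $p^\varepsilon=m_\pm+\tilde p_\pm$ on $\Omega_{\pm,\varepsilon}$, where $m_\pm$ are the layer means. Poincar\'e--Wirtinger on $\Omega_{\pm,\varepsilon}$ gives $\|\tilde p_\pm\|_{L^2}\le C\|\nabla p^\varepsilon\|_{L^2(\Omega_{\pm,\varepsilon})}$, with a constant uniform because these domains are uniformly Lipschitz. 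The trace theorem likewise bounds the difference between $m_\pm$ and the mean of $p^\varepsilon$ over $z=\pm h_0/2$.
\item Let $\bar p(z):=L^{-1}\int_{\mathbb T_L}p^\varepsilon\,\dd x$. By \eqref{2-6} we have $\int_{\mathbb T_L}u_z^\varepsilon\,\dd x=0$, so Darcy's law in the middle layer gives $\bar p\,'(z)=-\bar\psi(z)$ there. This is the key point: the factor $K_0^{-1}$ disappears after horizontal averaging.
\item Consequently, for $|z|<h_0/2$, $\bar p(z)$ differs from $\bar p(h_0/2)$ by at most $h_0^{1/2}\|\psi^\varepsilon\|_{L^2}$, and the same holds for the total drop $\bar p(h_0/2)-\bar p(-h_0/2)$, which recovers \eqref{2-5}. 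Hence $|m_+-m_-|\le C\|\psi^\varepsilon\|_{L^2}$, and $\int_{\Omega_{0,\varepsilon}}p^\varepsilon=Lh_0\,\bar p(h_0/2)+O(h_0^{3/2}\|\psi^\varepsilon\|_{L^2})$.
\item Insert these relations into $|\Omega_{+,\varepsilon}|m_++|\Omega_{-,\varepsilon}|m_-+\int_{\Omega_{0,\varepsilon}}p^\varepsilon=0$. This gives a $2\times2$ linear system for $m_\pm$ with a uniformly nondegenerate coefficient matrix and right-hand side $O(\|\psi^\varepsilon\|_{L^2})$. Therefore $|m_\pm|\le C\|\psi^\varepsilon\|_{L^2}$, which closes \eqref{uniform-p-1}.
\end{enumerate}
I expect this mean-value bookkeeping, namely keeping every constant independent of $h_0$ and $K_0$, to be the main obstacle.

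For the limit problem, \eqref{uniform-limit-p-4} follows from \eqref{pressure-limit-est} when $\alpha<\infty$. There the generalized Poincar\'e inequality, Lemma~\ref{lem:generalized-Poincare}, together with the normalizations in $\mathcal Q$, replaces the averaging argument above. When $\alpha=\infty$ one uses the standard Neumann estimate. The $H^2$ part then comes from \eqref{L^2-12} and the equation $-\Delta p^0=\partial_z\psi^0$ in $\Omega_\pm$. Finally, \eqref{uniform-limit-p-1}--\eqref{uniform-limit-p-2} follow from Darcy's law and Lemma~\ref{lemmaA1} on the two fixed half-domains, with no thin layer present.
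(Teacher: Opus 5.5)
Your proposal is correct. The velocity part coincides with the paper's, which simply collects \eqref{H^1-7-1}--\eqref{H^1-7} and \eqref{H^1-7-3}. For the pressure, the key mechanism is also the same: horizontal averaging removes the $K_0^{-1}u_z^\varepsilon$ term by \eqref{2-6}.

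The bookkeeping differs, however.
\begin{itemize}
\item The paper splits $p^\varepsilon$ into Fourier modes. For the zero mode it uses the averaged Darcy law on all of $(-H,H)$. Since $K^\varepsilon((p_0^\varepsilon)'+\psi_0^\varepsilon)$ is piecewise constant, continuous, and vanishes at $z=\pm H$, one gets $(p_0^\varepsilon)'=-\psi_0^\varepsilon$ globally. The zero-mean condition and a one-dimensional Poincar\'e inequality then finish. The nonzero modes have zero mean on every slice and are handled by a Neumann estimate on each outer layer, with the boundary flux in $H^{-1/2}$.
\item You instead decompose into layer means and fluctuations. Poincar\'e--Wirtinger and the trace theorem on the outer layers, whose heights are bounded above and below uniformly, control the fluctuations. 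The averaged Darcy law is used only across the thin layer, and a $2\times 2$ system with determinant $|\Omega|$ closes the argument.
\end{itemize}

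Your route avoids the Fourier splitting and the $H^{-1/2}$ normal-trace step. That step is in any case redundant, since \eqref{H^1-7-0} already bounds $\nabla p^\varepsilon$ on $\Omega_{\pm,\varepsilon}$ uniformly.

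The mean bookkeeping you flag as the main obstacle largely disappears once you note that your identity $\bar p\,'=-\bar\psi$ holds on the whole interval $(-H,H)$, not only in the middle layer. Indeed, \eqref{2-6} holds for every $z$. Then $\int_{-H}^{H}\bar p\,\dd z=0$ bounds $\bar p$, and hence $m_\pm$, directly, and no linear system is needed.

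For the limit problem with $\alpha<\infty$, your appeal to \eqref{pressure-limit-est} is more direct than the paper's rerun of the zero-mode argument. That estimate rests on Lemma~\ref{lem:generalized-Poincare} together with the two normalizations in $\mathcal Q$. The paper instead shows $u_{z,0}^0\equiv 0$ and uses the transmission law, or the jump normalization when $\alpha=0$, to get $\llbracket p_0^0\rrbracket_\Gamma=0$.
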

\begin{proof} 
The estimates \eqref{uniform-u-1}-\eqref{uniform-u-2} for $\Bu^\varepsilon$ are exactly given by \eqref{H^1-7-1}-\eqref{H^1-7} and \eqref{H^1-7-3}. Then \eqref{uniform-limit-p-1}-\eqref{uniform-limit-p-2} can be proved similarly.  Now we prove the estimates for $p^\varepsilon$ with $\varepsilon>0$. To this end, we write the Fourier expansion in the horizontal variable $x$ as
\begin{equation}\nonumber
  p^\varepsilon(x,z) = p_0^\varepsilon(z) +  \sum_{n\in \mathbb{Z},n\neq 0} p^\varepsilon_n(z) e^{\ii n \frac{2\pi}{L} x}=: p_0^\varepsilon(z) + p_{\neq}^\varepsilon(x,z),
\end{equation}
where
\begin{equation}\nonumber
  p^\varepsilon_n (z) = \frac{1}{L}\int_{0}^{L} p^\varepsilon(x,z) e^{- \ii n \frac{2\pi}{L}x} \dd x.
\end{equation}
Similarly, we write also 
\begin{equation}\nonumber
  \psi^\varepsilon(x,z) = \psi^\varepsilon_0(z) + \psi^\varepsilon_{\neq}(x,z)
\end{equation}
and
\begin{equation}\nonumber
  \Bu^\varepsilon = \Bu^\varepsilon_0(z) + \Bu^\varepsilon_{\neq}(x,z)= u^\varepsilon_{x,0}(z) \Be_x+ u^\varepsilon_{z,0}(z)\Be_z+ u^\varepsilon_{x,\neq}(x,z) \Be_x + u^\varepsilon_{z,\neq}(x,z) \Be_z .
\end{equation}

For the zero mode, it follows from \eqref{pressure} that $p^\varepsilon_0$ satisfies
\begin{equation}\nonumber
  \left\{
    \begin{aligned}
&-(K^\varepsilon(p^\varepsilon_{0})')' = (K^\varepsilon\psi^\varepsilon_0)',\quad \text{ in }(-H,H)\\
&(p^\varepsilon_0)'(\pm H)= 0.
    \end{aligned}\right.
  \end{equation}
  Noting that $K^\varepsilon$ is constant in each layer, then $K^\varepsilon((p^\varepsilon_0)'+ \psi^\varepsilon_0)$ is piecewise constant. Since $K^\varepsilon((p^\varepsilon_0)'+ \psi^\varepsilon_0)$ is continuous at any $z= \pm \tfrac{h_0}{2}$, and vanishes at the boundary $z=\pm H$, we conclude that
  \begin{equation}\label{uniform-p-2}
    (p^\varepsilon_0)'+  \psi^\varepsilon_0=0.
  \end{equation}
  On the other hand, since $p^\varepsilon$ has zero mean value, one has
  \begin{equation}\label{uniform-p-2-1}
    0 = \int_\Omega p^\varepsilon \dd x\dd z  = L\int_{-H}^H p^\varepsilon_0\dd z.
  \end{equation}
  Then an application of Poincar\'e inequality gives
  \begin{equation}\label{uniform-p-3}
    \| p^\varepsilon_0 \|_{H^1(\Omega)} \leq  C\| (p^\varepsilon_0 )'\|_{L^2(-H,H)}  \leq C \|\psi^\varepsilon_0\|_{L^2(-H,H)} \leq C \|\psi^\varepsilon_0\|_{L^2(\Omega)}.
  \end{equation}

  Furthermore, according to \eqref{pressure} and \eqref{Sharps}$_1$, the nonzero mode $p^\varepsilon_{\neq}$ satisfies
  \begin{equation}  \nonumber
    \left\{
\begin{aligned}
&-\nabla\cdot(K^\varepsilon \nabla p^\varepsilon_{\neq} ) = \nabla \cdot(K^\varepsilon \psi^\varepsilon_{\neq}\Be_z)   , ~~~~&& \text{ in } \Omega_{\pm,\varepsilon}\\
&  \partial_z p^\varepsilon_{\neq}= -\psi^\varepsilon_{\neq}  -\frac{1}{K_\pm } u^\varepsilon_{z,\neq},  &&\text{ on } \partial\Omega_{\pm,\varepsilon}.
\end{aligned}\right.
    \end{equation}
    Noting that
    \begin{equation}\nonumber
\int_{0}^{L} p^\varepsilon_{\neq}(x,z) \dd x = 0,\quad \text{  for any }z\in (-H,H),
    \end{equation}
    one has
    \begin{equation}\nonumber
\int_{\Omega_{\pm,\varepsilon}} p^\varepsilon_{\neq}  \dd x\dd z = 0.
    \end{equation}
    Since \(0<h_0\le H/2\), the rectangles \(\Omega_{\pm,\varepsilon}\) have heights uniformly bounded above and below. Hence the Poincaré, trace, and elliptic constants below can be chosen independently of \(\varepsilon\). Then it follows from the standard elliptic estimates and \eqref{uniform-u-1} that
    \begin{equation}\label{uniform-p-4}
\begin{aligned}
\| p^\varepsilon_{\neq}\|_{H^1(\Omega_{\pm,\varepsilon})} \leq&~C\| \nabla p^\varepsilon_{\neq}\|_{L^2(\Omega_{\pm,\varepsilon})} \\
\leq &~C(\|\psi^\varepsilon_{\neq}\|_{L^2(\Omega_{\pm,\varepsilon})} +\|u^\varepsilon_{z,\neq}\|_{H^{-\frac12} (\partial\Omega_{\pm,\varepsilon})})\\
\leq&~ C(\|\psi^\varepsilon_{\neq}\|_{L^2(\Omega_{\pm,\varepsilon})}+\|\Bu^\varepsilon_{\neq}\|_{L^2(\Omega_{\pm ,\varepsilon})}) \\
\leq&~ C \|\psi^\varepsilon \|_{L^2(\Omega)}.
\end{aligned}
\end{equation} Combining \eqref{uniform-p-3} and \eqref{uniform-p-4}, one obtains the $H^1$ estimate given in \eqref{uniform-p-1}. 

% By using  Poinc\'are inequality, trace theorem, \eqref{H^1-7-0} and \eqref{uniform-p-1}, one has
%   \begin{equation}\nonumber
%   \begin{aligned}
%    \|p^\varepsilon\|_{L^2(\Omega_{0,\varepsilon})} \leq &~ \left\|p^\varepsilon - p^\varepsilon\left(x,\tfrac{h_0}{2}\right)\right\|_{L^2(\Omega_{0,\varepsilon})} + \left\| p^\varepsilon\left(x,\tfrac{h_0}{2}\right)\right\|_{L^2(\Omega_{0,\varepsilon})} \\
%    \leq&~h_0 \left\|\partial_zp^\varepsilon \right\|_{L^2(\Omega_{0,\varepsilon})} + h_0^\frac12 \left\|p^\varepsilon\left(x,\tfrac{h_0}{2}\right)\right\|_{L^2(0,L)} \\
%    \leq&~ h_0 K_0^{-\frac12} \left\|\sqrt{K^\varepsilon}\partial_zp^\varepsilon \right\|_{L^2(\Omega_{0,\varepsilon})} + h_0^\frac12 \left\|p^\varepsilon \right\|_{H^1(\Omega_{+,\varepsilon})} \\
%   \leq&~C h_0^\frac12\left\|\psi^\varepsilon \right\|_{L^2(\Omega)}.
%   \end{aligned}
%   \end{equation}
Furthermore, since $\partial_x p^\varepsilon$ satisfies the elliptic equation \eqref{pressure_x} and $\int_{\Omega} \partial_x p^\varepsilon  \dd \Bx = 0$, we can apply the same argument as above to deduce that 
\begin{equation}\label{uniform-p-5}
    \|\partial_x p^\varepsilon\|_{H^1(\Omega_{\pm,\varepsilon})}   \leq   C\|\partial_x\psi^\varepsilon\|_{L^2(\Omega)}.
\end{equation}
% and 
% \begin{equation}\label{uniform-p-6}
%   \|\partial_x p^\varepsilon\|_{L^2(\Omega_{\pm,\varepsilon})}   \leq   C h_0^\frac12\|\partial_x\psi^\varepsilon\|_{L^2(\Omega)}.
% \end{equation}
Then, by virtue of the elliptic equation \eqref{pressure} and \eqref{uniform-p-5}, we have
\begin{equation}\nonumber 
 \|\sqrt{K^\varepsilon}\partial_z^2 p^\varepsilon\|_{L^2(\Omega_{\pm ,\varepsilon})} \leq  \|\sqrt{K^\varepsilon}\partial_{x}^2 p^\varepsilon\|_{L^2(\Omega_{\pm,\varepsilon}) }+ \|\sqrt{K^\varepsilon}\partial_{z}\psi^\varepsilon\|_{L^2(\Omega_{\pm,\varepsilon}) }
  \leq C\| \nabla\psi^\varepsilon\|_{L^2(\Omega) },
\end{equation}
and hence,
\begin{equation}\nonumber
\|p^\varepsilon\|_{H^2(\Omega_{\pm,\varepsilon})} \leq C\left(\|p^\varepsilon\|_{H^1(\Omega_{\pm,\varepsilon})}+\|\partial_x p^\varepsilon\|_{H^1(\Omega_{\pm,\varepsilon})} + \|\partial_{z}^2p^\varepsilon\|_{L^2(\Omega_{\pm,\varepsilon})}\right) \leq C\|\nabla \psi^\varepsilon\|_{L^2(\Omega)}.
\end{equation}
The case $\varepsilon=0$ and $\alpha=\infty$ can be proved in the same way. 

For \(\varepsilon=0\) and \(0\le\alpha<\infty\), incompressibility implies that the horizontal mean normal velocity is constant in each outer layer. The no-flux boundary condition gives \(u_{z,0}^0\equiv0\). Consequently, \((p_0^0)'+\psi_0^0=0\) in \((-H,0)\) and \((0,H)\). For \(\alpha>0\), the transmission law then gives \(\llbracket p_0^0\rrbracket_\Gamma=0\); for \(\alpha=0\), the same conclusion follows from the chosen mean-jump normalization. Together with the global mean normalization, this yields the required \(H^1\) estimate for \(p_0^0\)
%Finally, for the case $\varepsilon=0$ and $\alpha\in [0,\infty)$, the zero mode $p^0_0$ satisfies the corresponding pressure transmission problem 
%\begin{equation}\nonumber
%  \left\{
%    \begin{aligned}
%&-(K^0(p^0_{0})')' = (K^0\psi^0_0)',&& \text{ in }(-H,H)\\
%&(p^0_0)' = 0,&& \text{ on } z=\pm H,\\
%&(p^0_0)' = \frac{\alpha}{K}\llbracket p^0_0 \rrbracket_\Gamma-\psi^0,&& \text{ on } z=0,\\
%&\int_{0}^L \llbracket p^0_0 \rrbracket_\Gamma\dd x =0,
%    \end{aligned}\right.
%  \end{equation} 
and the nonzero mode $p^0_{\neq}$ satisfies
\begin{equation}  \nonumber
    \left\{
\begin{aligned}
&-\nabla\cdot(K^0 \nabla p^0_{\neq} ) = \nabla \cdot(K^0 \psi^0_{\neq}\Be_z)   , ~~~~&& \text{ in } \Omega_{\pm}\\
&  \partial_z p^0_{\neq}= -\psi^0_{\neq}  -\frac{1}{K_\pm } u^0_{z,\neq},  &&\text{ on } \partial\Omega_{\pm}.
\end{aligned}\right.
    \end{equation}
Then the estimates \eqref{uniform-limit-p-4} for $p^0$ can be proved in a way similar to the case $\varepsilon>0$. We omit the details and finish the proof of this proposition.
\end{proof} 

Note that $p^\varepsilon$ and $\Bu^\varepsilon$ are determined by $\psi^\varepsilon$ through the elliptic equations \eqref{pressure} (or \eqref{pressure-limit} when $\varepsilon=0$) and  \eqref{Sharps}$_1$ (or \eqref{Sharps-limit} when $\varepsilon=0$) , which are linear equations  with coefficients independent of $x$. Then one can take the horizontal derivative $\partial_x$ on these equations and obtain the corresponding estimates for $\partial_x p^\varepsilon$ and $\partial_x \Bu^\varepsilon$ in a similar way. 
\begin{proposition}\label{uniform-p-x}
    Let $\delta$ be chosen so that \eqref{delta_2} is satisfied and $\alpha\in[0,\infty]$. 
For any $\varepsilon\in (0,1]$, let $(\Bu^\varepsilon,p^\varepsilon,\psi^\varepsilon)$ be a solution to the problem \eqref{Sharps}-\eqref{bc}. Then there exists a constant $C$ independent of $\varepsilon$, such that
\begin{equation}\label{uniform-u-1-x}
  \|\partial_x\Bu^\varepsilon\|_{L^2(\Omega_{i,\varepsilon})}   \leq   C K_i^\frac12\|\partial_x\psi^\varepsilon\|_{L^2(\Omega)},~~
  \|\nabla\partial_x\Bu^\varepsilon\|_{L^2(\Omega_{i,\varepsilon})}   \leq   CK_i^\frac12 \|\nabla \partial_x\psi^\varepsilon\|_{L^2(\Omega)}  
\end{equation}
and 
\begin{equation}\label{uniform-u-2-x}
  \|\partial_x\Bu^\varepsilon\|_{L^4(\Omega_{i,\varepsilon})}   \leq   C  \|\nabla \partial_x\psi^\varepsilon\|_{L^2(\Omega)}^\frac12\|\partial_x\psi^\varepsilon\|_{L^2(\Omega)}^\frac12, 
\end{equation}
for $i=+,-,0$. Furthermore, we have
\begin{equation}\label{uniform-p-1-x}
  \|\partial_x p^\varepsilon\|_{H^1(\Omega_{\pm,\varepsilon})}   \leq   C\|\partial_x\psi^\varepsilon\|_{L^2(\Omega)},\quad   \|\partial_x p^\varepsilon\|_{H^2(\Omega_{\pm,\varepsilon})}   \leq   C \|\nabla\partial_x\psi^\varepsilon\|_{L^2(\Omega)}.
\end{equation}    
Finally, for the solution  $(\Bu^0,p^0,\psi^0)$ to the limit problem \eqref{Sharps-limit}, \eqref{initial-limit}-\eqref{interface1-limit}, we have 
\begin{equation}\label{uniform-limit-p-1-x}
  \|\partial_x\Bu^0\|_{L^2(\Omega_{\pm})}   \leq   C\|\partial_x\psi^0\|_{L^2(\Omega)},\quad
  \|\nabla\partial_x\Bu^0\|_{L^2(\Omega_{\pm})}   \leq   C\|\nabla \partial_x\psi^0\|_{L^2(\Omega)},
\end{equation} 
\begin{equation}\label{uniform-limit-p-2-x}
  \|\partial_x\Bu^0\|_{L^4(\Omega_{\pm})}   \leq   C \|\nabla \partial_x\psi^0\|_{L^2(\Omega)}^\frac12\|\partial_x\psi^0\|_{L^2(\Omega)}^\frac12
\end{equation}  
and 
\begin{equation}\label{uniform-limit-p-4-x}
   \|\partial_x p^0\|_{H^1(\Omega_{\pm})}   \leq   C\|\partial_x\psi^0\|_{L^2(\Omega)},\quad \|\partial_x p^0\|_{H^2(\Omega_{\pm})}   \leq   C \|\nabla\partial_x\psi^0\|_{L^2(\Omega)}.
\end{equation} 
\end{proposition}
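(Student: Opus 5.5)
The plan is to reduce Proposition \ref{uniform-p-x} to Proposition \ref{uniform-p} by differentiating in $x$. The permeability $K^\varepsilon$ (or $K^0$) and the interface locations do not depend on $x$. Hence $(\BU^\varepsilon,P^\varepsilon,\Psi^\varepsilon)=\partial_x(\Bu^\varepsilon,p^\varepsilon,\psi^\varepsilon)$ solves the same linear elliptic–Darcy system, \eqref{pressure_x} together with $\BU^\varepsilon=-K^\varepsilon(\nabla P^\varepsilon+\Psi^\varepsilon\Be_z)$ and $\nabla\cdot\BU^\varepsilon=0$, with $\psi^\varepsilon$ replaced by $\Psi^\varepsilon$. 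It also satisfies the same boundary and interface conditions: continuity of $U_z$ and $P$ for $\varepsilon>0$ or $\alpha=\infty$, and the differentiated transmission law \eqref{pressure-limit-x} for $\varepsilon=0$, $\alpha<\infty$. To justify this at the regularity available, I would use horizontal difference quotients $\tau_h f=(f(x+h,\cdot)-f(x,\cdot))/h$. These commute exactly with the $x$-independent problem. I would apply the already established bounds to $\tau_h$ of the solution and then let $h\to0$, using weak lower semicontinuity of the norms.

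The key point is that the proof of Proposition \ref{uniform-p} used only the structure of the linear map $\psi\mapsto(\Bu,p)$; it did not use the evolution equation. For \eqref{uniform-u-1-x} I would repeat \eqref{H^1-7-0}–\eqref{H^1-7-2} with $\Psi^\varepsilon$ in place of $\psi^\varepsilon$. The energy identity gives $\|\sqrt{K^\varepsilon}\nabla P^\varepsilon\|\le\|\sqrt{K^\varepsilon}\Psi^\varepsilon\|$, and Darcy's law then yields the layerwise $L^2$ bound for $\BU^\varepsilon$ with the factor $K_i^{1/2}$. Next I apply the same argument to $\partial_x\Psi^\varepsilon$, i.e.\ to $\partial_x^2 p^\varepsilon$, and use $-\Delta P^\varepsilon=\partial_z\Psi^\varepsilon$ in each layer. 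This gives the $H^2$ bound on $P^\varepsilon$ layerwise with the weight $\sqrt{K^\varepsilon}$, and hence the $\nabla\BU^\varepsilon$ estimate. For \eqref{uniform-u-2-x}, in the outer layers and when $\alpha<\infty$, I use Lemma \ref{lemmaA1} exactly as in \eqref{H^1-7}, where $h_0^{-1/4}K_0^{1/2}\le C$.

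For \eqref{uniform-p-1-x} I would use the Fourier splitting of Proposition \ref{uniform-p}. The zero mode of $P^\varepsilon$ vanishes identically, since $\partial_x$ kills the $x$-independent part. So only the nonzero-mode argument \eqref{uniform-p-4} is needed. There, $\int_{\Omega_{\pm,\varepsilon}}P^\varepsilon=0$ gives the Poincaré inequality, and the Neumann data $-\Psi^\varepsilon-U^\varepsilon_z/K_\pm$ is controlled through the $H^{-1/2}$ trace by \eqref{uniform-u-1-x}. All constants are uniform in $\varepsilon$ because the outer rectangles have heights in $[H/2,H]$. The $H^2$ bound follows from \eqref{uniform-p-5} applied to $\partial_x P^\varepsilon$ together with $\partial_z^2P^\varepsilon=-\partial_x^2P^\varepsilon-\partial_z\Psi^\varepsilon$. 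The limit case $\varepsilon=0$ is handled identically, using \eqref{L^2-12} and the transmission problem \eqref{pressure-limit-x} in place of \eqref{H^1-7-0}. In the zero mode the mean-jump normalization again becomes trivial.

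The main obstacle I expect is the $L^4$ bound in the middle layer when $\alpha=\infty$, where $h_0^{-1/4}K_0^{1/2}$ is unbounded. I would repeat Step 2 of Proposition \ref{H^1} verbatim for $\BU^\varepsilon$. The first step is the reflected field $\Bv^\varepsilon=\big(\tfrac{K_0}{K_+}U^\varepsilon_{+,x}(x,h_0-z),U^\varepsilon_{+,z}(x,h_0-z)\big)$, which matches $\BU^\varepsilon_0$ on $z=h_0/2$ by continuity of $U_z$ and of $\partial_xP^\varepsilon$. The difference $\Bw^\varepsilon=\BU^\varepsilon_0-\Bv^\varepsilon$ then vanishes on that line, so Lemma \ref{lem:A2} applies with a thickness-independent constant. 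Combining this with the outer-layer $L^4$ bound for $\Bv^\varepsilon$ gives \eqref{uniform-u-2-x}. The delicate point is checking that $P^\varepsilon$ has enough regularity for the tangential derivative $\partial_x^2p^\varepsilon$ to be continuous across the interfaces. This requires $\partial_x\psi^\varepsilon\in H_0^1$, which the right-hand side $\|\nabla\partial_x\psi^\varepsilon\|$ presupposes.
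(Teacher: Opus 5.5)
Your proposal is correct and follows the paper's (only sketched) route: the map $\psi\mapsto(\Bu,p)$ is linear with $x$-independent coefficients, so one differentiates in $x$ and reruns the proof of Proposition~\ref{uniform-p}, including the reflected-field and thin-strip argument for the middle layer when $\alpha=\infty$. Your difference-quotient device in fact finishes the job on its own: apply Proposition~\ref{uniform-p} to $\tau_h\psi^\varepsilon\in H_0^1(\Omega)$, whose velocity and pressure are $\tau_h\Bu^\varepsilon$ and $\tau_h p^\varepsilon$, bound $\|\tau_h\psi^\varepsilon\|$ and $\|\nabla\tau_h\psi^\varepsilon\|$ by $\|\partial_x\psi^\varepsilon\|$ and $\|\nabla\partial_x\psi^\varepsilon\|$, and let $h\to0$ using weak lower semicontinuity; this yields every estimate, so re-running Step 2 for $\BU^\varepsilon$ and checking interface continuity of $\partial_x^2p^\varepsilon$ are unnecessary.
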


\clearpage

\section{Convergence}
This section justifies the effective interface models at two levels. We first establish quantitative convergence of solutions at each fixed positive time, distinguishing the three regimes determined by the limiting hydraulic conductance. We then combine these estimates with the uniform bounds from Section 4 to prove upper semicontinuity of the global attractors, which connects the finite-time approximation with the long-time dynamics of the limiting interface problem.
\subsection{Finite-time convergence} 

% We normalize both $p^\varepsilon$ and $p^0$ by
% \[
% \int_\Omega p^\varepsilon\dd x
% =
% \int_\Omega p^0\dd x
% =0.
% \]

% We shall use the following intermediate-layer estimates established
% above:
% \begin{align}
% \|u^\varepsilon\|_{L^2(\Omega_{0,\varepsilon})}
% &\leq
% C K_0^{\frac12}
% \|\psi^\varepsilon\|_{L^2(\Omega)},
% \label{eq:middle-u-L2-alpha-infinity}
% \\
% \|u^\varepsilon\|_{L^4(\Omega_{0,\varepsilon})}
% &\leq
% C
% \left(
% K_0^{\frac14}+h_0^{\frac14}
% \right)
% \|\psi^\varepsilon\|_{H^1(\Omega)}.
% \label{eq:middle-u-L4-alpha-infinity}
% \end{align}
% The second estimate follows by subtracting an interfacial lifting,
% applying the uniform Ladyzhenskaya inequality to the resulting
% zero-trace remainder, and estimating the lifting by Lemma~A.3.

\begin{theorem} \label{thm:convergence} 
Suppose that $\psi_{\rm in}^\varepsilon = \psi_{\rm in}^0$ for any $\varepsilon\in (0,1]$ and $\psi_{\rm in}^0,\partial_x \psi_{\rm in}^0 \in H_0^1(\Omega)$. Then the solution $(\Bu^\varepsilon, p^\varepsilon, \psi^\varepsilon)$ of problem \eqref{Sharps}--\eqref{bc} converges to the solution $(\Bu^0, p^0, \psi^0)$ of the limit problem \eqref{Sharps-limit}, \eqref{initial-limit}-\eqref{interface1-limit} in the following sense:
  \begin{equation}\nonumber
\|\Bu^\varepsilon-\Bu^0\|_{L^2(\Omega)} + \|\psi^\varepsilon-\psi^0\|_{L^2(\Omega)} + \| \nabla p^\varepsilon -\nabla p^0\|_{L^2(\Omega_{\pm,\varepsilon})} \leq M_{11} \mathcal{E}_\alpha^\frac12(\varepsilon),\quad \text{for any } t>0,
  \end{equation}
  where the constant $M_{11}= M_{11}(t,\|\psi_{\rm in}^0\|_{H^1(\Omega)},\|\partial_x\psi_{\rm in}^0\|_{H^1(\Omega)})$ is independent of $\varepsilon$, and 
 \begin{equation}
\mathcal{E}_\alpha(\varepsilon) :=\left\{ \begin{aligned}
     & h_0^\frac12 + K_0^\frac12+\left|\frac{h_0}{K_0} \right|^\frac12,&&\text{ if } \alpha =\infty,\\
   & h_0^\frac12 + K_0^\frac12+\left|\frac{K_0}{h_0} -\alpha \right|,&&\text{ if } \alpha\in [0,\infty).
  \end{aligned}\right.
 \end{equation}
\end{theorem}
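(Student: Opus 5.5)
We argue by a relative energy estimate for the difference $\theta:=\psi^\varepsilon-\psi^0$, with $\theta(0)=0$ because the initial data coincide. Since both $\psi^\varepsilon$ and $\psi^0$ lie in $H_0^1(\Omega)$, we may test the weak formulations \eqref{weak-psi} for $\psi^\varepsilon$ and for $\psi^0$ with $\varphi=\theta$ and subtract. This yields
\[
\tfrac12\tfrac{\dd}{\dd t}\|\theta\|_{L^2}^2+(D^\varepsilon\nabla\psi^\varepsilon-D^0\nabla\psi^0,\nabla\theta)
=-(\Bu^\varepsilon\cdot\nabla\psi^\varepsilon-\Bu^0\cdot\nabla\psi^0,\theta)-(\phi_b'(u_z^\varepsilon-u_z^0),\theta)+((D^\varepsilon-D^0)\phi_b'',\theta).
\]
The last term vanishes for small $h_0$, because $\operatorname{supp}\phi_b'$ avoids the layer. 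The diffusion term splits as $\|\sqrt{D^\varepsilon}\nabla\theta\|^2+((D^\varepsilon-D^0)\nabla\psi^0,\nabla\theta)$. The coefficient mismatch is supported in $\Omega_{0,\varepsilon}$, so we bound it by $\|\nabla\psi^0\|_{L^2(\Omega_{0,\varepsilon})}\|\nabla\theta\|$. We then use the one-dimensional trace/thin-strip bound
\[
\|f\|_{L^2(\Omega_{0,\varepsilon})}^2\le Ch_0\|f\|_{H^1(\Omega_\pm)}^2
\]
applied to $\nabla\psi^0$. This requires piecewise $H^2$ control of $\psi^0$, available in $L^2_t$ from Proposition \ref{prop1-limit} and \eqref{H^1-1-1}, and it produces an $O(h_0^{1/2})$-type contribution. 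The nonlinear term is rewritten as $((\Bu^\varepsilon-\Bu^0)\cdot\nabla\psi^\varepsilon,\theta)$ plus $(\Bu^0\cdot\nabla\theta,\theta)$, and the latter vanishes. We bound the former with the uniform $L^4$ bounds \eqref{uniform-u-2} and the Ladyzhenskaya inequality for $\theta$. Hence everything reduces to an estimate of the form
\[
\|\Bu^\varepsilon-\Bu^0\|_{L^2(\Omega)}^2\le C\|\theta\|_{L^2}^2+C\,\mathcal{E}_\alpha(\varepsilon)\,N(t),
\]
where $N$ is built from the uniform bounds of Propositions \ref{H^1}, \ref{H^1-x}, \ref{uniform-p}, \ref{uniform-p-x}. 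Gr\"onwall's inequality then gives $\|\theta(t)\|^2\le M\,\mathcal{E}_\alpha(\varepsilon)$. Reinserting this into the velocity estimate gives the bound on $\Bu^\varepsilon-\Bu^0$, and the outer pressure gradient follows from Darcy's law on $\Omega_{\pm,\varepsilon}$.

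\textbf{Velocity/pressure comparison (main step).} In $\Omega_{0,\varepsilon}$ we simply use $\|\Bu^\varepsilon\|_{L^2(\Omega_{0,\varepsilon})}\le CK_0^{1/2}\|\psi^\varepsilon\|$ from \eqref{uniform-u-1}, together with $\|\Bu^0\|_{L^2(\Omega_{0,\varepsilon})}\le Ch_0^{1/2}\|\Bu^0\|_{H^1(\Omega_\pm)}$. On $\Omega_{\pm,\varepsilon}$ we write $\Bu^\varepsilon-\Bu^0=-K_\pm(\nabla(p^\varepsilon-p^0)+\theta\Be_z)$ and test the difference of pressure equations with $q=p^\varepsilon-\tilde p^0$. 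Here $\tilde p^0$ is a pressure extension of $p^0$: we shift the restrictions $p^0|_{\Omega_\pm}$ to $\Omega_{\pm,\varepsilon}$ (or restrict them, which is harmless since $p^0$ is piecewise $H^2$ by \eqref{uniform-limit-p-4}) and interpolate linearly in $z$ across the layer between the traces $p^0(\cdot,0^\pm)$, so that $\tilde p^0\in H^1(\Omega)$. In the layer, $K_0\partial_z\tilde p^0\approx K_0\llbracket p^0\rrbracket/h_0$, which is compared with $\alpha\llbracket p^0\rrbracket=-u_z^0$. This matching of the prelimit Darcy flux in the layer with the transmission law \eqref{2-3} produces the error $|K_0/h_0-\alpha|\,\|\llbracket p^0\rrbracket\|_{L^2(\Gamma)}^2$. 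The tangential part $K_0\partial_x\tilde p^0$ contributes $O(K_0h_0)$. The misplacement of interfaces between $z=0$ and $z=\pm h_0/2$ contributes $O(h_0^{1/2})$ terms, via traces of $\nabla p^0$ and $\partial_x$-regularity; this is where Propositions \ref{H^1-x} and \ref{uniform-p-x} enter. The generalized Poincar\'e inequality (Lemma \ref{lem:generalized-Poincare}) handles the mean/jump normalizations of $\mathcal Q$ against the zero-mean $p^\varepsilon$. The interfacial residual is absorbed by
\[
\|\sqrt{K^\varepsilon}\nabla(p^\varepsilon-\tilde p^0)\|^2+\tfrac{K_0}{h_0}\|\text{jump}\|^2 .
\]

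\textbf{Case split.} For $\alpha\in(0,\infty)$ the above is direct. For $\alpha=0$ the jump term gives no coercivity, so we instead control $u_z^\varepsilon$ on $z=\pm h_0/2$ directly: \eqref{2-1-1} and \eqref{uniform-u-1} give that it is $O(K_0/h_0)\|p\|$ plus $O(K_0^{1/2})$, and this is compared with $u_z^0=0$. For $\alpha=\infty$ we take $\tilde p^0$ continuous (no jump). The layer residual is then $K_0\partial_z p^\varepsilon$ against a smooth continuous field, and the rate $(h_0/K_0)^{1/2}$ comes from Cauchy--Schwarz:
\[
\|\llbracket p^\varepsilon\rrbracket\|\le (h_0/K_0)^{1/2}\|\sqrt{K_0}\partial_zp^\varepsilon\|.
\]
We expect the \textbf{main obstacle} to be this pressure comparison across the collapsing layer, namely constructing $\tilde p^0$ so that all residuals are of size $\mathcal{E}_\alpha(\varepsilon)$ uniformly in $\varepsilon$. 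The degenerate $\alpha=0$ and $\alpha=\infty$ endpoints are the most delicate, and there we would lean on the reflected-field/thin-strip argument from Step 2 of Proposition \ref{H^1}.
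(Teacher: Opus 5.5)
Your plan is sound in outline, but it follows a genuinely different route from the paper. It needs one repair, to the nonlinear term, described in the second paragraph.

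The paper compares pressures in the \emph{limit} geometry. For $0<\alpha<\infty$ it extends $p^\varepsilon$ harmonically from $\Omega_{\pm,\varepsilon}$ into the two half-layers, obtaining a broken $\bar p^\varepsilon\in H^1(\Omega_+)\times H^1(\Omega_-)$. It then tests the prelimit weak form with the cut-offs $\zeta_\pm W^\pm$, which gives an approximate transmission problem with coefficient $K_0/h_0$ and remainders $I_\pm=O(K_0^{1/2})$. Subtracting the limit problem and using Lemma \ref{lem:generalized-Poincare} for coercivity finishes that case. The endpoints are treated separately: for $\alpha=0$ by flux traces on each $\Omega_{\pm,\varepsilon}$, and for $\alpha=\infty$ by testing with $p^\varepsilon-p^0\in H^1(\Omega)$.

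You instead compare in the \emph{prelimit} geometry, extending $p^0$ across the layer and testing with $q=p^\varepsilon-\tilde p^0\in H^1(\Omega)$. This works as follows.
\begin{itemize}
\item Take $\tilde p^0=p^0$ on $\Omega_{\pm,\varepsilon}$. Integrating by parts there (using $\nabla\cdot\Bu^0=0$) leaves only the fluxes $\mp\int_{\mathbb{T}_L}u_z^0(\cdot,\pm\frac{h_0}{2})\,q$.
\item The layer contributes $-\frac{K_0}{h_0}\int_{\mathbb{T}_L}J\llbracket q\rrbracket$, where $J$ is the jump of the interpolated traces.
\item The bound $K_0\|\partial_z q\|_{L^2(\Omega_{0,\varepsilon})}^2\ge\frac{K_0}{h_0}\|\llbracket q\rrbracket\|_{L^2(\mathbb{T}_L)}^2$ absorbs the resulting residual.
\item The difference $u_z^0(\cdot,\pm\frac{h_0}{2})-u_z^0(\cdot,0)$ has zero horizontal mean, so only Poincar\'e and trace on $\Omega_{\pm,\varepsilon}$ are needed.
\end{itemize}
This single construction covers all three regimes. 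For $\alpha=0$ the layer term already carries the factor $K_0/h_0\le\mathcal{E}_0$, so the separate flux argument you borrow from the paper is optional.

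The bigger gain is in the scalar step. The paper estimates $(\widetilde{\Bu}\cdot\nabla\widetilde\psi,\psi^0)$ with $\widetilde{\Bu}$ in $L^4$. That forces the $\nabla^2\widetilde p$ bounds, hence Propositions \ref{H^1-x}, \ref{uniform-p-x} and the hypothesis on $\partial_x\psi_{\rm in}^0$. At $\alpha=\infty$ it also forces the reflected field $\Bv^\varepsilon$ with Lemma \ref{lem:A2}. An $L^2$ velocity comparison needs none of this. The interface-misplacement errors use only $p^0\in H^2(\Omega_\pm)$ and $\Bu^0\in H^1(\Omega_\pm)$ from Proposition \ref{uniform-p}, not the $\partial_x$-estimates you cite. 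The resulting rate is at least the stated one. In exchange, the paper's version gives the comparison directly in the limit geometry and $L^4$ convergence of the velocity.

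The repair concerns $((\Bu^\varepsilon-\Bu^0)\cdot\nabla\psi^\varepsilon,\theta)$, which cannot be closed with the uniform bound \eqref{uniform-u-2}. If the velocity difference is only bounded in $L^4$ and $\nabla\psi^\varepsilon$ is in $L^2$, Young's inequality leaves a term $C\|\theta\|_{L^2}^{2/3}$. The inequality $y'\le Cy^{1/3}$ with $y(0)=0$ gives no smallness in $\varepsilon$. To get your reduction to an $L^2$ velocity estimate, bound the term by $\|\Bu^\varepsilon-\Bu^0\|_{L^2}\|\nabla\psi^\varepsilon\|_{L^4}\|\theta\|_{L^4}$. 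Then use $\|\nabla\psi^\varepsilon\|_{L^4}\le C\|\nabla\psi^\varepsilon\|_{L^2}^{1/2}\|\mathcal{L}^\varepsilon\psi^\varepsilon\|_{L^2}^{1/2}$ from \eqref{A1-1} and Lemma \ref{lemma-equivalence}. After Young, the Gr\"onwall coefficient is $C\|\nabla\psi^\varepsilon\|_{L^2}^{2/3}\|\mathcal{L}^\varepsilon\psi^\varepsilon\|_{L^2}^{2/3}$. This is integrable in time uniformly in $\varepsilon$ by \eqref{H^1-1-1}, and with this change your argument closes.
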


\begin{proof} 
The proof is organized into seven steps. In each regime, we first estimate
the pressure error, use Darcy's law to control the velocity error, and then
close the scalar energy estimate. 
  
{\bf Step 1. Convergence of $p^\varepsilon$ when $\alpha=\infty$.} In the highly permeable interface regime
\begin{equation}\label{convergence-1}
h_0(\varepsilon)\to0,
\qquad
K_0(\varepsilon)\to0,
\qquad
\frac{K_0(\varepsilon)}{h_0(\varepsilon)}\to\infty
\quad\text{as }\varepsilon\to0,
\end{equation}
it follows that
\[
\mathcal E_\infty\to0
\qquad\text{as }\varepsilon\to0.
\]
Hence, without loss of generality,  we assume that
\[
0<\mathcal E_\infty\leq1.
\]
 As is discussed in Section \ref{sec.3.1}, in the regime \eqref{convergence-1}, the pressure of the limiting system satisfies the interfacial condition 
\begin{equation}\label{convergence-2}
\llbracket p^0\rrbracket_\Gamma=0.
\end{equation}
Hence, $p^0$ is identified with a function in $H^1(\Omega)$. 

Set
\begin{equation}\label{convergence-3}
\widetilde{\Bu}:=\Bu^\varepsilon-\Bu^0,\qquad \widetilde{p}:=p^\varepsilon-p^0,
\qquad
\widetilde{\psi}:=\psi^\varepsilon-\psi^0.
\end{equation}
Since $p^\varepsilon,p^0\in H^1(\Omega)$, we have $\widetilde{p}\in H^1(\Omega)$ and 
\[
\int_\Omega \widetilde{p}\dd \Bx=0.
\] 

The pressure $p^\varepsilon$ satisfies
\begin{equation}\label{convergence-4}
\int_\Omega K^\varepsilon \left( \nabla p^\varepsilon+\psi^\varepsilon e_z
\right) \cdot\nabla q\dd \Bx =0, \qquad \text{ for any } q\in H^1(\Omega),
\end{equation}
whereas $p^0$ satisfies
\begin{equation}\label{convergence-5}
\int_\Omega K^0 \left( \nabla p^0+\psi^0e_z \right) \cdot\nabla q\dd \Bx =0, \qquad\text{ for any }  q\in H^1(\Omega),
\end{equation}
% where
% \[
% K^0(x,z)
% =
% \begin{cases}
% K_+,&z>0,\\
% K_-,&z<0.
% \end{cases}
% \] 
Subtracting the two weak formulations and taking $q=\widetilde{p}$ give 
\begin{equation}\label{convergence-6}
\begin{aligned}
\int_\Omega K^\varepsilon|\nabla \widetilde{p}|^2\dd \Bx
= 
-\int_\Omega
K^\varepsilon\widetilde{\psi}\partial_z\widetilde{p}\dd\Bx -
\int_{\Omega_{0,\varepsilon}}
(K^\varepsilon-K^0)
(\nabla p^0+\psi^0e_z)\cdot\nabla \widetilde{p}\dd \Bx.
\end{aligned}
\end{equation}
Here we used the fact that
\[
K^\varepsilon=K^0
\qquad
\text{in }
\Omega_{+,\varepsilon}\cup\Omega_{-,\varepsilon}.
\] 
By Cauchy--Schwarz and Young inequalities, we have 
\begin{equation}
\left|
\int_\Omega
K^\varepsilon\widetilde{\psi}\partial_z\widetilde{p}\dd\Bx
\right|
\leq
\frac14
\int_\Omega
K^\varepsilon|\nabla \widetilde{p}|^2\dd\Bx
+
C\|\widetilde{\psi}\|_{L^2(\Omega)}^2.
\label{convergence-7}
\end{equation}
Using Lemma \ref{lem:thin-layer-L2} and Propositions \ref{H^1}, \ref{uniform-p}, we also have
\begin{equation}
\label{convergence-8}
\begin{aligned}
&~\left|
\int_{\Omega_{0,\varepsilon}}
(K^\varepsilon-K^0)
(\nabla p^0+\psi^0e_z)\cdot\nabla \widetilde{p}\dd \Bx
\right|\\
\leq&~
\frac14
\int_{\Omega_{0,\varepsilon}}
K_0|\nabla \widetilde{p}|^2\dd \Bx
+
\frac{C}{K_0}(
\|\nabla p^0\|_{L^2(\Omega_{0,\varepsilon})}^2+ 
\|\psi^0\|_{L^2(\Omega_{0,\varepsilon})}^2)\\
\leq&~\frac14
\int_{\Omega_{0,\varepsilon}}
K_0|\nabla \widetilde{p}|^2\dd \Bx
+
C\frac{h_0}{K_0}(
\| p^0\|_{H^2(\Omega_{+} \cup \Omega_{-})}^2+ 
\|\psi^0\|_{H^1(\Omega)}^2)\\
\leq&~\frac14
\int_{\Omega_{0,\varepsilon}}
K_0|\nabla \widetilde{p}|^2\dd \Bx
+
C\mathcal E_\infty^2
\|\psi^0\|_{H^1(\Omega)}^2\\
\leq&~\frac14
\int_{\Omega_{0,\varepsilon}}
K_0|\nabla \widetilde{p}|^2\dd \Bx
+
CM_7\mathcal E_\infty^2.
\end{aligned}
\end{equation}
 
Substituting
\eqref{convergence-7}-
\eqref{convergence-8} into
\eqref{convergence-6} gives
\begin{equation}
\label{convergence-9} 
\|\nabla \widetilde{p}\|_
{L^2(\Omega_{+,\varepsilon}\cup\Omega_{-,\varepsilon})}^2
+
K_0
\|\nabla \widetilde{p}\|_{L^2(\Omega_{0,\varepsilon})}^2 \leq
C\|\widetilde{\psi}\|_{L^2(\Omega)}^2
+
CM_7\mathcal E_\infty^2. 
\end{equation} 

On the other hand, note that $p^\varepsilon$ and $\Bu^\varepsilon$ are determined by $\psi^\varepsilon$ through the elliptic equations \eqref{pressure} (or \eqref{pressure-limit} when $\varepsilon=0$), which are linear equations  with coefficients independent of $x$. Then one can take the horizontal derivative $\partial_x$ on these equations and obtain the corresponding estimates for $\partial_x p^\varepsilon$ in a similar way. Consequently, we have
\begin{equation}
\label{convergence-10}
\begin{aligned} 
&~\|\nabla\partial_x\widetilde{p}\|_
{L^2(\Omega_{+,\varepsilon}\cup\Omega_{-,\varepsilon})}^2
+
K_0
\|\nabla\partial_x\widetilde{p}\|_
{L^2(\Omega_{0,\varepsilon})}^2 \\
\leq&~
C\|\partial_x\widetilde{\psi}\|_{L^2(\Omega)}^2
+
C\frac{h_0}{K_0}
\left(
\|\partial_x\psi^0\|_{H^1(\Omega)}^2
+
\|\partial_xp^0\|_
{H^2(\Omega_+\cup \Omega_-)}^2
\right) \\ 
% \leq&~C\|\partial_x\widetilde{\psi}\|_{L^2(\Omega)}^2 + C\frac{h_0}{K_0} 
% \|\partial_x\psi^0\|_{H^1(\Omega)}^2 \\ 
\leq&~C\|\nabla\widetilde{\psi}\|_{L^2(\Omega)}^2 + CM_{10} 
\mathcal E_\infty^2.
\end{aligned}
\end{equation} 
Since $\widetilde{p}$ satisfies the elliptic problem 
\begin{equation}\nonumber
-\Delta \widetilde{p} =\partial_z\widetilde{\psi},\quad \text{ in }\Omega_{\pm,\varepsilon},
\end{equation}
one has 
\begin{equation}\label{convergence-11}
\begin{aligned}
\|\nabla^2\widetilde{p}\|_
{L^2(\Omega_{\pm,\varepsilon})}^2
 \leq&~
C
\|\nabla\partial_x\widetilde{p}\|_
{L^2(\Omega_{\pm,\varepsilon})}^2
+
C\|\partial_z\widetilde{\psi}\|_{L^2(\Omega_{\pm,\varepsilon})}^2\\
\leq&~
C\|\nabla\widetilde{\psi}\|_{L^2(\Omega)}^2
+
CM_{10}\mathcal E_\infty^2. 
\end{aligned}
\end{equation}
 
{\bf Step 2. Convergence of the velocity when $\alpha=\infty$.} In the outer layer $\Omega_{\pm,\varepsilon}$, Darcy's law gives
\[
\widetilde{\Bu} = - K_\pm \left( \nabla \widetilde{p}+\widetilde{\psi} e_z
\right).
\]
Then it follows from
\eqref{convergence-9} that
\begin{equation}
\|\widetilde{\Bu}\|_
{L^2(\Omega_{\pm,\varepsilon})}
\leq C(\|\nabla \widetilde{p}\|_{L^2(\Omega_{\pm,\varepsilon})}+\|\widetilde{\psi}\|_{L^2(\Omega_{\pm,\varepsilon})}) \leq 
C\|\widetilde{\psi}\|_{L^2(\Omega)}
+
CM_7^{\frac12}
\mathcal E_\infty.
\label{convergence-12}
\end{equation}
Similarly, by \eqref{convergence-11}, one has
\begin{equation}
\|\nabla \widetilde{\Bu}\|_
{L^2(\Omega_{\pm,\varepsilon})}
\leq  C(\|\nabla^2 \widetilde{p}\|_{L^2(\Omega_{\pm,\varepsilon})}+\|\nabla\widetilde{\psi}\|_{L^2(\Omega_{\pm,\varepsilon})}) \leq 
C\|\nabla\widetilde{\psi}\|_{L^2(\Omega)}
+
CM_{10}^{\frac12}
\mathcal E_\infty.
\label{convergence-13}
\end{equation}
Then one uses Lemma \ref{lem:A2} to obtain
\begin{equation}
\label{convergence-14}
\begin{aligned} 
\|\widetilde{\Bu}\|_
{L^4(\Omega_{\pm,\varepsilon})} \leq&~C\|\nabla\widetilde{\Bu}\|_
{L^2(\Omega_{\pm,\varepsilon})}^\frac12\| \widetilde{\Bu}\|_
{L^2(\Omega_{\pm,\varepsilon})}^\frac12+C\| \widetilde{\Bu}\|_
{L^2(\Omega_{\pm,\varepsilon})} \\
\leq&~
C
\|\widetilde{\psi}\|_{L^2(\Omega)}^{\frac12}
\|\nabla\widetilde{\psi}\|_{L^2(\Omega)}^{\frac12}
+
C\|\widetilde{\psi}\|_{L^2(\Omega)} 
\\
& + C\left(
M_7^{\frac14}+M_{10}^{\frac14}
\right)
\mathcal E_\infty^{\frac12}
\|\nabla\widetilde{\psi}\|_{L^2(\Omega)}^{\frac12} 
+C\left(
M_7^{\frac12}+M_{10}^{\frac12}
\right)
\mathcal E_\infty.
\end{aligned}
\end{equation}  
In the intermediate layer $\Omega_{0,\varepsilon}$, one combines \eqref{H^1-7-1} and Proposition \ref{H^1} to obtain
\begin{equation}\label{convergence-15}
\|\Bu^\varepsilon\|_{L^2(\Omega_{0,\varepsilon})}
\leq CK_0^\frac12\|\psi^\varepsilon\|_{L^2(\Omega )}.
\end{equation}
% We divide the estimation of $\|\Bu^\varepsilon\|_{L^4(\Omega_{0,\varepsilon})}$ into two parts based on the magnitude of $\alpha$. If $\alpha\in [0,\infty)$, by \eqref{H^1-7}, one has further
% \begin{equation}
% \|\Bu^\varepsilon\|_{L^4(\Omega_{0,\varepsilon})}
% \leq CK_0^\frac14\|\psi^\varepsilon\|_{H^1(\Omega )}.
% \end{equation}
On the other hand, recall the notion used in Step 2. of the proof of Proposition \ref{H^1}. By Lemma \ref{lem:thin-layer-L2} and \eqref{H^1-7-1}-\eqref{H^1-7-2}, it holds that 
\begin{equation}\label{convergence-16}
\|\Bv^\varepsilon\|_{L^2(\Omega_{0,\varepsilon})}  \leq C\|\Bu^\varepsilon\|_{L^2(Q_{+,\varepsilon})} \leq Ch_0^\frac12\|\Bu^\varepsilon\|_{H^1(\Omega_{+,\varepsilon})}  \leq Ch_0^\frac12\|\psi^\varepsilon\|_{H^1(\Omega )}.
\end{equation}
\begin{equation}\label{convergence-17}
\|\nabla \Bv^\varepsilon\|_{L^2(\Omega_{0,\varepsilon})}  \leq C\|\nabla \Bu^\varepsilon\|_{L^2(Q_{+,\varepsilon})} \leq C \|\Bu^\varepsilon\|_{H^1(\Omega_{+,\varepsilon})}  \leq C \|\psi^\varepsilon\|_{H^1(\Omega )} 
\end{equation}
and 
\begin{equation}\label{convergence-18}
\|\Bv^\varepsilon\|_{L^4(\Omega_{0,\varepsilon})}  \leq C\|\Bu^\varepsilon\|_{L^4(Q_{+,\varepsilon})} \leq Ch_0^\frac14\|\Bu^\varepsilon\|_{H^1(\Omega_{+,\varepsilon})}  \leq Ch_0^\frac14\|\psi^\varepsilon\|_{H^1(\Omega )}.
\end{equation}
Combining \eqref{H^1-7-1}-\eqref{H^1-7-2} and \eqref{convergence-15}-\eqref{convergence-16}, we obtain
\begin{equation}\label{convergence-19}
\|\Bw^\varepsilon\|_{L^2(\Omega_{0,\varepsilon})} \leq \|\Bu^\varepsilon\|_{L^2(\Omega_{0,\varepsilon})}+\|\Bv^\varepsilon\|_{L^2(\Omega_{0,\varepsilon})}
\leq
C(h_0^\frac12+K_0^\frac12)\|\psi^\varepsilon\|_{H^1(\Omega)}
\end{equation}
and 
\begin{equation}\label{convergence-20}
\|\nabla\Bw^\varepsilon\|_{L^2(\Omega_{0,\varepsilon})} \leq \|\nabla\Bu^\varepsilon\|_{L^2(\Omega_{0,\varepsilon})}+\|\nabla\Bv^\varepsilon\|_{L^2(\Omega_{0,\varepsilon})}
\leq
C \|\psi^\varepsilon\|_{H^1(\Omega)}.
\end{equation}
Applying Lemma~\ref{lem:A2} to
$\Bw^\varepsilon$ gives
\begin{equation}\label{convergence-21}
\|\Bw^\varepsilon\|_{L^4(\Omega_{0,\varepsilon})}\leq
C
\|\Bw^\varepsilon\|_{L^2(\Omega_{0,\varepsilon})}^{\frac12}
\|\nabla \Bw^\varepsilon\|_{L^2(\Omega_{0,\varepsilon})}^{\frac12}
\leq
C(h_0^\frac14+K_0^\frac14)\|\psi^\varepsilon\|_{H^1(\Omega)}.
\end{equation}
Consequently, by \eqref{convergence-18} and \eqref{convergence-21}, one obtains
\begin{equation}\label{convergence-22}
\|\Bu^\varepsilon\|_{L^4(\Omega_{0,\varepsilon})} \leq \|\Bv^\varepsilon\|_{L^4(\Omega_{0,\varepsilon})}+\|\Bw^\varepsilon\|_{L^4(\Omega_{0,\varepsilon})}
\leq
C(h_0^\frac14+K_0^\frac14)\|\psi^\varepsilon\|_{H^1(\Omega)}.
\end{equation} 

Furthermore, using Lemma \ref{lem:thin-layer-L2} and Proposition \ref{uniform-p} gives 
\begin{equation}\label{convergence-23}
\|\Bu^0\|_{L^2(\Omega_{0,\varepsilon})} \leq Ch_0^\frac12(\|\Bu^0\|_{H^1(\Omega_+ )}+ \|\Bu^0\|_{H^1(  \Omega_-) }) \leq Ch_0^{\frac12} \|\psi^0\|_{H^1(\Omega)} 
\end{equation}
and 
\begin{equation}\label{convergence-24}
\|\Bu^0\|_{L^4(\Omega_{0,\varepsilon})} \leq Ch_0^\frac14(\|\Bu^0\|_{H^1(\Omega_+ )}+ \|\Bu^0\|_{H^1(  \Omega_-) }) \leq Ch_0^{\frac14} \|\psi^0\|_{H^1(\Omega)} .
\end{equation}

Combining \eqref{convergence-15}, \eqref{convergence-23} and using Proposition \ref{H^1}, we have
\begin{equation}\label{convergence-25}
\begin{aligned}
\|\widetilde{\Bu}\|_{L^2(\Omega_{0,\varepsilon})} \leq &~\|\Bu^\varepsilon\|_{L^2(\Omega_{0,\varepsilon})} +\|\Bu^0\|_{L^2(\Omega_{0,\varepsilon})}\\
\leq&~
C 
\left(
K_0^{\frac12}+h_0^{\frac12}
\right) (\|\psi^\varepsilon\|_{H^1(\Omega)} + \|\psi^0\|_{H^1(\Omega)} )
\leq 
CM_7^{\frac12}\mathcal E_\infty 
\end{aligned}
\end{equation}
and 
\begin{equation}\label{convergence-26}
\begin{aligned}
\|\widetilde{\Bu}\|_{L^4(\Omega_{0,\varepsilon})} \leq&~ \|\Bu^\varepsilon\|_{L^4(\Omega_{0,\varepsilon})} +\|\Bu^0\|_{L^4(\Omega_{0,\varepsilon})}\\
\leq&~
C 
\left(
K_0^{\frac14}+h_0^{\frac14}
\right) (\|\psi^\varepsilon\|_{H^1(\Omega)} + \|\psi^0\|_{H^1(\Omega)} )
\leq 
CM_7^{\frac12}\mathcal E_\infty^\frac12.
\end{aligned}
\end{equation}
These, together with \eqref{convergence-12} and \eqref{convergence-25}, yield
\begin{equation}\label{convergence-27}
\|\widetilde{\Bu}\|_
{L^2(\Omega)}
\leq  
C\|\widetilde{\psi}\|_{L^2(\Omega)}
+
CM_7^{\frac12}
\mathcal E_\infty 
\end{equation}
and 
\begin{equation}\label{convergence-28}
\begin{aligned} 
\|\widetilde{\Bu}\|_
{L^4(\Omega)}  
\leq&~
C
\|\widetilde{\psi}\|_{L^2(\Omega)}^{\frac12}
\|\nabla\widetilde{\psi}\|_{L^2(\Omega)}^{\frac12}
+
C\|\widetilde{\psi}\|_{L^2(\Omega)} \\
& 
+
C
\left(
M_7^{\frac14}+M_{10}^{\frac14}
\right)
\mathcal E_\infty^{\frac12}
\|\nabla\widetilde{\psi}\|_{L^2(\Omega)}^{\frac12}
+
C
\left(
M_7^{\frac12}+M_{10}^{\frac12}
\right)
\mathcal E_\infty^\frac12.
\end{aligned}
\end{equation}
 
{\bf Step 3. Convergence of $\psi^\varepsilon$ when $\alpha=\infty$.}  Now we establish the convergence of $\psi^\varepsilon$. The  difference $\widetilde{\psi} =\psi^\varepsilon - \psi^0$ satisfies the following equation 
\begin{equation}\label{convergence-29}
\partial_t \widetilde{\psi} + \Bu^\varepsilon\cdot\nabla \widetilde{\psi}+\widetilde{\Bu}\cdot\nabla \psi^0   +  \phi_b'\widetilde{\Bu} \cdot \Be_z   =  \nabla\cdot(D^\varepsilon\nabla \widetilde{\psi}) + \nabla\cdot(\widetilde{D} \nabla \psi^0 )+ \widetilde{D}\phi_b'',
\end{equation}
where $\widetilde{D} := D^\varepsilon - D^0$ is supported in $\Omega_{0,\varepsilon}$. 

Testing \eqref{convergence-29} by $ \widetilde{\psi}$ and using integration by parts, we obtain
\begin{equation}\label{convergence-30}
\begin{aligned}
&~\frac12\frac{\dd }{\dd t}\|\widetilde{\psi}\|_{L^2(\Omega)}^2  + \|\sqrt{D^\varepsilon} \nabla \widetilde{\psi}\|_{L^2(\Omega)}^2 \\
=&~(\widetilde{\Bu}\cdot\nabla   \widetilde{\psi}, \psi^0 ) -  (\phi_b'\widetilde{\Bu} \cdot \Be_z, \widetilde{\psi}) - (\widetilde{D} \nabla \psi^0 ,\nabla\widetilde{\psi}) +  (\widetilde{D}\phi_b'', \widetilde{\psi}).
\end{aligned}
\end{equation}

With the aid of \eqref{convergence-28} and Proposition \ref{H^1}, one applies Lemma \ref{lemmaA1} and Young's inequality to deduce
\begin{equation}\label{convergence-31}
\begin{aligned}
&~ |(  \widetilde{\Bu}\cdot\nabla  \widetilde{\psi}  ,\psi^0)| \leq \|\nabla  \widetilde{\psi}\|_{L^2(\Omega)}\|\psi^0\|_{L^4(\Omega)}  \|\widetilde{\Bu}\|_{L^4(\Omega)} \\ 
\leq&~\frac{\min_i D_i}{4} \|\nabla  \widetilde{\psi}\|_{L^2(\Omega)}^2 +  C \|\psi^0\|_{L^4(\Omega)}^2  \|\widetilde{\Bu}\|_{L^4(\Omega)}^2\\
\leq&~\frac{\min_i D_i}{4} \|\nabla  \widetilde{\psi}\|_{L^2(\Omega)}^2 +  C \|\psi^0\|_{H^1(\Omega)}^2  \|\widetilde{\Bu}\|_{L^4(\Omega)}^2\\
\leq&~\frac{\min_i D_i}{4} \|\nabla  \widetilde{\psi}\|_{L^2(\Omega)}^2  + C M_7( \|\nabla \widetilde{\psi}\|_{L^2(\Omega)}  \| \widetilde{\psi}\|_{L^2(\Omega)}   +   \|\widetilde{\psi}\|_{L^2(\Omega)} ^2  )  \\
    &~ + C M_7(   (M_7^\frac12 + M_{10}^\frac12 )  \|\nabla \widetilde{\psi}\|_{L^2(\Omega)} \mathcal{E}_\infty +  (M_7 +M_{10} )\mathcal{E}_\infty) \\
\leq&~\frac{\min_i D_i}{2} \|\nabla  \widetilde{\psi}\|_{L^2(\Omega)}^2 +  CM_7^2\| \widetilde{\psi}\|_{L^2(\Omega)}^2 +C(M_7^3+M_{10}^3)\mathcal{E}_\infty^2.
\end{aligned}
\end{equation}
Using H\"older inequality and \eqref{convergence-27}, one has
\begin{equation}\label{convergence-32}
\begin{aligned}
|( \phi_b'\widetilde{\Bu} \cdot \Be_z, \widetilde{\psi})| =&~ \left|\int_\Omega   \phi_b' \widetilde{\Bu} \cdot \Be_z  \widetilde{\psi}\dd \Bx \right| \leq~ C\|\widetilde{\Bu}\|_{L^2(\Omega)}\|\widetilde{\psi}\|_{L^2(\Omega)}\\
\leq&~C(\|\widetilde{\psi}\|_{L^2(\Omega)}  +  M_7^\frac12 \mathcal{E}_\infty)\| \widetilde{\psi}\|_{L^2(\Omega)}\\
\leq&~C \| \widetilde{\psi}\|_{L^2(\Omega)}^2+ CM_7\mathcal{E}_\infty^2 
\end{aligned}
\end{equation}
and
\begin{equation}\label{convergence-33}
\begin{aligned}
|(\widetilde{D}\phi_b'', \widetilde{\psi})| \leq& ~C\|\widetilde{D}\|_{L^2(\Omega)}\|\widetilde{\psi}\|_{L^2(\Omega)} \leq  Ch_0^\frac12\|\widetilde{\psi}\|_{L^2(\Omega)}\\
\leq&~C \| \widetilde{\psi}\|_{L^2(\Omega)}^2+ C \mathcal{E}_\infty^2 .
\end{aligned}
\end{equation}

Finally, by virtue of Lemmas \ref{lemmaA1}-\ref{lemma-equivalence}, one has
\begin{equation}\label{convergence-34}
\begin{aligned}
|( \widetilde{D} \nabla \psi^0, \nabla\widetilde{\psi} )| 
\leq&~\frac{\min_iD_i}{4} \|\nabla  \widetilde{\psi}\|_{L^2(\Omega)}^2 + C \|\widetilde{D}\|_{L^4(\Omega)}^2\|\nabla\psi^0\|_{L^4(\Omega)}^2\\
\leq&~\frac{\min_iD_i}{4} \|\nabla  \widetilde{\psi}\|_{L^2(\Omega)}^2 + C h_0^\frac{1}{2}  \|\psi^0\|_{W^0}^2\\
\leq&~\frac{\min_iD_i}{4} \|\nabla  \widetilde{\psi}\|_{L^2(\Omega)}^2 + C \mathcal{E}_\infty  \|\mathcal{L}^0\psi^0\|_{L^2(\Omega)}^2.
\end{aligned}
\end{equation}
Substituting \eqref{convergence-31}-\eqref{convergence-34} to \eqref{convergence-30} gives 
\begin{equation}\label{convergence-35}
\begin{aligned}
&~\frac{1}{2}\frac{\dd }{\dd t} \|\widetilde{\psi}\|_{L^2(\Omega)}^2 + \frac{\min_i D_i}{4}\|\nabla \widetilde{\psi}\|_{L^2(\Omega)}^2 \\
\leq&~  CM_7^2\| \widetilde{\psi}\|_{L^2(\Omega)}^2 +C(M_7^3+M_{10}^3)\mathcal{E}_\infty^2 +C \mathcal{E}_\infty  \|\mathcal{L}^0\psi^0\|_{L^2(\Omega)}^2.
\end{aligned}
\end{equation}
Since $\widetilde{\psi}(0)=0$, one utilizes  Gr\"onwall's inequality and Proposition \ref{H^1} to obtain
\begin{equation}\nonumber
\begin{aligned}
\|\widetilde{\psi}(t)\|_{L^2(\Omega)}^2 \leq&~ \int_0^t \left( C(M_7^3+M_{10}^3)\mathcal{E}_\infty^2 +C \mathcal{E}_\infty  \|\mathcal{L}^0\psi^0\|_{L^2(\Omega)}^2\right) \exp{\left(\int_\tau ^t CM_7^2\dd s\right)} \dd \tau\\ 
\leq&~  C \left((M_7^3+M_{10}^3)\mathcal{E}_\infty^2t+ \mathcal{E}_\infty\int_0^t  \|\mathcal{L}^0\psi^0(\tau )\|_{L^2(\Omega)}^2  \dd \tau\right) \exp{ (CM_7^2t)}\\
\leq&~  C \left((M_7^3+M_{10}^3)t+M_8(t)\right) \exp{ (CM_7^2t)}\mathcal{E}_\infty.
\end{aligned}
\end{equation}
This, together with \eqref{convergence-9} and \eqref{convergence-27}, gives 
\begin{equation}\label{convergence-37}
\begin{aligned}
 &~\|\widetilde{\Bu}\|_{L^2(\Omega)} + \| \widetilde{\psi}\|_{L^2(\Omega)} + \| \nabla\widetilde{p}\|_{L^2(\Omega_{\pm,\varepsilon})} \\
\leq &~C \|\widetilde{\psi}(t)\|_{L^2(\Omega)} + CM_7^\frac12 \mathcal{E}_\infty\\
\leq&~C \left[\left((M_7^3+M_{10}^3)t+M_8(t)\right)^\frac12  \exp{ (CM_7^2t)}  +M_7^\frac12 \right]\mathcal{E}_\infty^\frac12 \\ 
=&:M_{11}(t) \mathcal{E}_\infty^\frac12  .
\end{aligned}
\end{equation}
Then we prove Theorem \ref{thm:convergence} in the case $\alpha=\infty$.

{\bf Step 4. Convergence of $p^\varepsilon$ when $\alpha= 0$.}   We now consider low permeability regime, i.e., $\alpha=0$. In this case, we assume that
\begin{equation}
    h_0\to 0,\qquad K_0\to 0,\qquad
    \frac{K_0}{h_0}\to 0.
\end{equation}
Since $\mathcal E_0(\varepsilon)\to0$, we may also assume, without loss of generality, that $0<\mathcal E_0\le1$ for the remainder of this step.
In this regime, the limiting interface is impermeable, that is, $  u_z^0\big|_{\Gamma}=0$. We first estimate the pressure error $\widetilde{p}$ in $\Omega_{+,\varepsilon}$.
The argument in $\Omega_{-,\varepsilon}$ is analogous.

Recall that, in $\Omega_{+,\varepsilon}$, one has 
\begin{equation}
    \widetilde{\Bu}
    =
    -K_+ (\nabla \widetilde{p}+\widetilde{\psi}\Be_z ),
    \qquad
    \nabla\cdot \widetilde{\Bu}=0.
\end{equation}
Therefore, for any $q\in H^1(\Omega_{+,\varepsilon})$, integration by parts gives 
\begin{equation}\label{eq:alpha0-ptilde-weak}
    \int_{\Omega_{+,\varepsilon}}
    K_+ (\nabla \widetilde{p}+\widetilde{\psi} \Be_z )
    \cdot\nabla q\dd x
    =
    \int_{\Omega\cap \{z= \frac{h_0}{2}\}} 
   \widetilde{u}_z q\dd x.
\end{equation}

By incompressibility and periodicity of $\widetilde{\Bu}$ in the horizontal direction, one has
\begin{equation}
    \frac{\dd}{\dd z}
    \int_{\mathbb{T}_L}\widetilde{u}_z (x,z)\dd x
    =
    -\int_{\mathbb{T}_L}\partial_x\widetilde{u}_x (x,z)\dd x
    =
    0.
\end{equation}
Since $\widetilde{u}_z =0$ on $z=H$, it follows that
\begin{equation}\label{eq:alpha0-zero-mean-flux}
    \int_{\mathbb{T}_L}\widetilde{u}_z (x,z)\dd x=0,\quad \text{ for any } z\in  [-H,H].
\end{equation}
 
Set
\begin{equation}
    (\widetilde{p})_{+,\varepsilon}
    :=
    \frac{1}{|\Omega_{+,\varepsilon}|}
    \int_{\Omega_{+,\varepsilon}}\widetilde{p}\dd \Bx .
\end{equation}
Taking $q=\widetilde{p}-(\widetilde{p})_{+,\varepsilon}$ in \eqref{eq:alpha0-ptilde-weak} and using \eqref{eq:alpha0-zero-mean-flux}, we obtain
\begin{equation}\label{eq:alpha0-pressure-energy}
\begin{aligned}
    K_+\|\nabla \widetilde{p}\|_{L^2(\Omega_{+,\varepsilon})}^2
    = &~
    -K_+
    \int_{\Omega_{+,\varepsilon}}
    \widetilde{\psi}\,\partial_z\widetilde{p}\dd\Bx +
    \int_{\Omega \cap \{z=\frac{h_0}{2}\}}\widetilde{u}_z
    \left( \widetilde{p}  - (\widetilde{p})_{+,\varepsilon}
    \right)\dd x.
\end{aligned}
\end{equation}
For the first term on the right-hand side of
\eqref{eq:alpha0-pressure-energy}, Young's inequality gives
\begin{equation}\label{eq:alpha0-pressure-volume}
    \left|  K_+
    \int_{\Omega_{+,\varepsilon}}
    \widetilde{\psi}\,\partial_z\widetilde{p}\dd \Bx\right|
    \leq \frac{K_+}{4}
    \|\nabla \widetilde{p}\|_{L^2(\Omega_{+,\varepsilon})}^2+C\|\widetilde{\psi}\|_{L^2(\Omega)}^2.
\end{equation}

Since the height of $\Omega_{+,\varepsilon}$ is uniformly bounded from above and below for sufficiently small $\varepsilon$, it follows from the Poincar\'e inequality and the trace theorem that 
\begin{equation}\label{eq:alpha0-pressure-trace}
    \left\| \widetilde{p} -(\widetilde{p})_{+,\varepsilon}
    \right\|_{L^2(\Omega \cap \{z=\frac{h_0}{2}\})} \leq C \left\|
    \widetilde{p} 
    -
    (\widetilde{p})_{+,\varepsilon}
    \right\|_{H^1(\Omega_{+,\varepsilon})}
    \leq
    C\|\nabla \widetilde{p}\|_{L^2(\Omega_{+,\varepsilon})},
\end{equation}
where $C$ is independent of $\varepsilon$. We next estimate the normal velocity on $z=\frac{h_0}{2}$. For each fixed $x\in\mathbb{T}_L$ and any $s\in(-\frac{h_0}{2},\frac{h_0}{2})$, the fundamental
theorem of calculus gives
\begin{equation}\nonumber
    u_z^\varepsilon\left(x,\frac{h_0}{2}\right)
    =
    u_z^\varepsilon(x,s)
    +
    \int_s^{\frac{h_0}{2}}
    \partial_z u_z^\varepsilon(x,r)\dd r,
\end{equation}
and hence,
\begin{equation} \nonumber
    \left|
    u_z^\varepsilon\left(x,\frac{h_0}{2}\right)
    \right|^2
    \leq 
    2|u_z^\varepsilon(x,s)|^2
    +
    2\left|
    \int_s^{\frac{h_0}{2}}
    \partial_z u_z^\varepsilon(x,r)\dd r
    \right|^2. 
\end{equation}
Using Cauchy--Schwarz inequality, one has  
\begin{equation} \nonumber
\begin{aligned}
    \left|
    u_z^\varepsilon\left(x,\frac{h_0}{2}\right)
    \right|^2
    \leq &~2|u_z^\varepsilon(x,s)|^2
    + 2\left(\frac{h_0}{2}-s\right)
    \int_s^{\frac{h_0}{2}}
    |\partial_z u_z^\varepsilon(x,r)|^2\dd r \\
    \leq&~2|u_z^\varepsilon(x,s)|^2+ 2h_0
    \int_{-\frac{h_0}{2}}^{\frac{h_0}{2}}
    |\partial_z u_z^\varepsilon(x,r)|^2\dd r.
\end{aligned}
\end{equation}
Integrating the above inequality with respect to
$s\in(-\frac{h_0}{2},\frac{h_0}{2})$ and $x\in \mathbb{T}_L$, we obtain
\begin{equation}\nonumber
\begin{aligned}
   \left\|
    u_z^\varepsilon\left(\cdot,\frac{h_0}{2}\right)
    \right\|_{L^2(\mathbb{T}_L)}^2
    \leq&~
    \frac{2}{h_0}
    \|u_z^\varepsilon\|_{L^2(\Omega_{0,\varepsilon})}^2 +
    2h_0
    \|\partial_z u_z^\varepsilon\|_{L^2(\Omega_{0,\varepsilon})}^2.
\end{aligned}
\end{equation}

By Proposition \ref{uniform-p}, it follows that 
\begin{equation}\label{eq:alpha0-ueps-trace-final}
\begin{aligned}
   \left\|
    u_z^\varepsilon\left(\cdot,\frac{h_0}{2}\right)
    \right\|_{L^2(\mathbb{T}_L)}^2
    \leq &~
    C\frac{K_0}{h_0}
    \|\psi^\varepsilon\|_{L^2(\Omega)}^2+ 
    CK_0h_0
    \|\nabla\psi^\varepsilon\|_{L^2(\Omega)}^2.
\end{aligned}
\end{equation}
On the other hand, since $\alpha=0$, we have $ u_z^0(x,0)=0$, and hence,
\begin{equation}\nonumber
    \left|u_z^0\left(x,\frac{h_0}{2}\right)\right|^2
   = \left|
    \int_0^{\frac{h_0}{2}}
     \partial_zu_z^0(x,z) \dd z\right|^2.
\end{equation}
Integrating the equality over $x\in \mathbb{T}_L$ and using the Cauchy--Schwarz inequality, one has 
\begin{equation}\label{eq:alpha0-u0-trace}
\begin{aligned}
    \left\|
    u_z^0\left(\cdot,\frac{h_0}{2}\right)
    \right\|_{L^2(\mathbb{T}_L)}^2 \leq
    Ch_0
    \|\partial_zu_z^0\|_{L^2(\Omega_+)}^2 \leq
    Ch_0
    \|\nabla\psi^0\|_{L^2(\Omega)}^2,
\end{aligned}
\end{equation}
where Proposition \ref{uniform-p} has been used in the last inequality.

Combining \eqref{eq:alpha0-ueps-trace-final} and
\eqref{eq:alpha0-u0-trace}, we obtain
\begin{equation}\label{eq:alpha0-normal-flux-error}
\begin{aligned}
 \left\| \widetilde{u}_z \right\|_{L^2(\Omega \cap \{z=\frac{h_0}{2}\})}^2 
 \leq C\mathcal{E}_0
   ( \| \psi^\varepsilon\|_{H^1(\Omega)}^2+ 
    \| \psi^0\|_{H^1(\Omega)}^2).
\end{aligned}
\end{equation}
Furthermore, using Young's inequality and \eqref{eq:alpha0-pressure-trace}, \eqref{eq:alpha0-normal-flux-error}, we obtain
\begin{equation}\label{eq:alpha0-pressure-boundary}
\begin{aligned}
    &~\left|   \int_{\Omega \cap \{z=\frac{h_0}{2}\}}\widetilde{u}_z
    \left( \widetilde{p}  - (\widetilde{p})_{+,\varepsilon}
    \right)\dd x
    \right| \\
    \leq&~ \left\| \widetilde{u}_z \right\|_{L^2(\Omega \cap \{z=\frac{h_0}{2}\})} \left\| \widetilde{p} -(\widetilde{p})_{+,\varepsilon}
    \right\|_{L^2(\Omega \cap \{z=\frac{h_0}{2}\})} \\
  \leq&~
    \frac{K_+}{4}
    \|\nabla \widetilde{p}\|_{L^2(\Omega_{+,\varepsilon})}^2
    + C\mathcal{E}_0
   ( \| \psi^\varepsilon\|_{H^1(\Omega)}^2+ 
    \| \psi^0\|_{H^1(\Omega)}^2).
\end{aligned}
\end{equation}
Substituting \eqref{eq:alpha0-pressure-volume} and
\eqref{eq:alpha0-pressure-boundary} into
\eqref{eq:alpha0-pressure-energy} and using Proposition \ref{H^1}, we obtain
\begin{equation}\label{eq:alpha0-pressure-plus} 
    \|\nabla \widetilde{p}\|_{L^2(\Omega_{+,\varepsilon})}^2
   % \leq  C\|\widetilde{\psi}\|_{L^2(\Omega)}^2 +C\mathcal{E}_0( \| \psi^\varepsilon\|_{H^1(\Omega)}^2+ \| \psi^0\|_{H^1(\Omega)}^2)
   \leq  C\|\widetilde{\psi}\|_{L^2(\Omega)}^2
    +CM_7\mathcal{E}_0 . 
\end{equation}
Repeating the same argument in $\Omega_{-,\varepsilon}$ gives
\begin{equation}\label{eq:alpha0-pressure-minus} 
\|\nabla \widetilde{p}\|_{L^2(\Omega_{-,\varepsilon})}^2
    %\leq  C\|\widetilde{\psi}\|_{L^2(\Omega)}^2  +C\mathcal{E}_0( \| \psi^\varepsilon\|_{H^1(\Omega)}^2+ \| \psi^0\|_{H^1(\Omega)}^2) 
    \leq  C\|\widetilde{\psi}\|_{L^2(\Omega)}^2
    +CM_7\mathcal{E}_0.  
\end{equation}
A similar argument also holds for $\partial_x \Bu^\varepsilon$ and $\partial_x p^\varepsilon$ since $\Bu^\varepsilon$ and $p^\varepsilon$ are determined by $\psi^\varepsilon$ through linear equations with coefficients independent of $x$. Thus, we have also 
\begin{equation}\nonumber
\begin{aligned}
\| \nabla \partial_x \widetilde{p}\|_{L^2(\Omega_{\pm,\varepsilon} )}^2  \leq&~  C\|\partial_x\widetilde{\psi}\|_{L^2(\Omega)}^2+ C \mathcal{E}_0\left(\|\partial_x\psi^\varepsilon\|_{H^1(\Omega)}^2 + \|\partial_x\psi^0\|_{H^1(\Omega)}^2\right) . \\
 \leq&~  C\|\partial_x\widetilde{\psi}\|_{L^2(\Omega)}^2+ CM_{10} \mathcal{E}_0.
\end{aligned}
\end{equation}
Since $\widetilde{p}$ satisfies the elliptic problem 
\begin{equation}\nonumber 
-\Delta \widetilde{p} =\partial_z\widetilde{\psi},\quad \text{ in }\Omega_{\pm,\varepsilon},
\end{equation}
one obtains
\begin{equation}\label{convergence-69-0}
\|\nabla^2\widetilde{p}\|_{L^2(\Omega_{\pm,\varepsilon})}^2
\leq C\|\nabla \widetilde{\psi}\|_{L^2(\Omega)}^2+ CM_{10} \mathcal{E}_0.
\end{equation}

{\bf Step 5. Convergence of $\Bu^\varepsilon$ and $\psi^\varepsilon$ when $\alpha=0$.}
It follows from Darcy's law and \eqref{eq:alpha0-pressure-plus}-\eqref{eq:alpha0-pressure-minus}  that
  \begin{equation}\label{convergence-70}
\begin{aligned}
\| \widetilde{\Bu}\|_{L^2(\Omega_{\pm,\varepsilon})} = &~\|K_\pm(\nabla\widetilde{p} + \widetilde{\psi} \Be_z)\|_{L^2(\Omega_{\pm,\varepsilon})}\\
\leq &~C(\| \nabla\widetilde{p}\|_{L^2(\Omega_{\pm,\varepsilon})}+ \|\widetilde{\psi} \|_{L^2(\Omega_{\pm,\varepsilon})})\\
\leq&~C\|\widetilde{\psi}\|_{L^2(\Omega)}  + CM_7^\frac12 \mathcal{E}_0^\frac12.
\end{aligned}
\end{equation} 
Similarly, one also has from \eqref{convergence-69-0} that 
  \begin{equation}\label{convergence-71} 
\| \nabla \widetilde{\Bu}\|_{L^2(\Omega_{\pm,\varepsilon})}  \leq C(\| \nabla^2\widetilde{p}\|_{L^2(\Omega_{\pm,\varepsilon})}+ \|\nabla \widetilde{\psi} \|_{L^2(\Omega_{\pm,\varepsilon})})  \leq C\|\nabla \widetilde{\psi}\|_{L^2(\Omega)}  + CM_{10}^\frac12 \mathcal{E}_0^\frac12. 
\end{equation} 
By Poinc\'are inequality, Lemma \ref{lemmaA1} and \eqref{convergence-70}-\eqref{convergence-71}, we have 
\begin{equation}\label{convergence-72} 
\begin{aligned}
    \|\widetilde{\Bu}\|_{L^4(\Omega_{\pm,\varepsilon})}\leq&~  C ( \|\widetilde{\Bu}\|_{L^2(\Omega_{\pm,\varepsilon})}^\frac12\|\nabla\widetilde{\Bu}\|_{L^2(\Omega_{\pm,\varepsilon})}^\frac12 + \|\widetilde{\Bu}\|_{L^2(\Omega_{\pm,\varepsilon})}) \\
    \leq&~C  \|\nabla \widetilde{\psi}\|_{L^2(\Omega)} ^\frac12 \| \widetilde{\psi}\|_{L^2(\Omega)} ^\frac12
    + C\|\widetilde{\psi}\|_{L^2(\Omega)} \\
    &+ C\left(M_7^\frac14 + M_{10}^\frac14\right)\mathcal{E}_0^\frac14  \|\nabla \widetilde{\psi}\|_{L^2(\Omega)} ^\frac12
    + C\left(M_7^\frac12+ M_{10}^\frac12\right)\mathcal{E}_0^\frac12. 
\end{aligned}
\end{equation}

% This, together with Propositions \ref{H^1}-\ref{uniform-limit-p} and Lemma \ref{lemmaA1}, gives
% \begin{equation}\label{convergence-20-1}
% \begin{aligned}
%   \| \widetilde{\Bu}\|_{L^4(\Omega_{\pm,\varepsilon})} \leq & ~C  \| \nabla \widetilde{\Bu}\|_{L^2(\Omega_{\pm,\varepsilon})} ^\frac12 \| \widetilde{\Bu}\|_{L^2(\Omega_{\pm,\varepsilon})}^\frac12 + C\| \widetilde{\Bu}\|_{L^2(\Omega_{\pm,\varepsilon})} \\
%   \leq & ~C  (\| \nabla \Bu^\varepsilon\|_{L^2(\Omega_{\pm,\varepsilon})}+ \| \nabla \Bu^0\|_{L^2(\Omega_{\pm,\varepsilon})} )^\frac12 \| \widetilde{\Bu}\|_{L^2(\Omega_{\pm,\varepsilon})}^\frac12 + C\| \widetilde{\Bu}\|_{L^2(\Omega_{\pm,\varepsilon})} \\
%    \leq & ~C  (\| \psi^\varepsilon\|_{H^1(\Omega)}+ \| \psi^0\|_{H^1(\Omega)} )^\frac12 \| \widetilde{\Bu}\|_{L^2(\Omega_{\pm,\varepsilon})}^\frac12 + C\| \widetilde{\Bu}\|_{L^2(\Omega_{\pm,\varepsilon})} \\
%       \leq & ~C \|\widetilde{\psi}\|_{L^2(\Omega)}  + C M_6^\frac12 \|\widetilde{\psi}\|_{L^2(\Omega)} ^\frac12   +  C M_6 E^\frac12(\varepsilon) 
%    .
% \end{aligned}
% \end{equation}
On the other hand, in the thin layer $\Omega_{0,\varepsilon}$, using \eqref{H^1-7-1}, \eqref{H^1-7}, Proposition \ref{uniform-p} and Lemma \ref{lem:thin-layer-L2}, we have 
\begin{equation}\nonumber\label{convergence-73}
 \|\Bu^\varepsilon\|_{L^r(\Omega_{0,\varepsilon})} \leq  CK_0^\frac1r\| \psi^\varepsilon\|_{H^1(\Omega)} 
\end{equation}
and 
\begin{equation}\nonumber\label{convergence-74}
\begin{aligned}
\| \Bu^0\|_{L^r(\Omega_{0,\varepsilon})}  
\leq&~ \|\Bu^0\|_{L^r(\Omega_{+}\cap\Omega_{0,\varepsilon})}+ \|\Bu^0\|_{L^r(\Omega_{-}\cap \Omega_{0,\varepsilon})}  \\
\leq&~ Ch_0^\frac1r(\|\Bu^0\|_{H^1(\Omega_{+})}+ \|\Bu^0\|_{H^1(\Omega_{-} )} ) \\
\leq&~ Ch_0^\frac1r\|\psi^0\|_{H^1(\Omega)} ,
\end{aligned}
\end{equation}
for $r=2,4$. Hence, by  Propositions \ref{H^1}, one has 
\begin{equation}\label{convergence-75}
\begin{aligned}
  \|\widetilde{\Bu}\|_{L^r(\Omega_{0,\varepsilon})} \leq&~ \| \Bu^0\|_{L^r(\Omega_{0,\varepsilon})} + \|\Bu^\varepsilon\|_{L^r(\Omega_{0,\varepsilon})}    \\
  \leq&~ C(\|\psi^\varepsilon\|_{H^1(\Omega)}+\|\psi^0\|_{H^1(\Omega)})( h_0^\frac1r + K_0^\frac1r  )\\
  \leq&~ CM_7^\frac12( h_0^\frac1r + K_0^\frac1r  ),
\end{aligned}
\end{equation}
for $r =2,4$. Combining \eqref{convergence-70} and \eqref{convergence-72}-\eqref{convergence-75}, we obtain
\begin{equation}\label{convergence-76}
\|\widetilde{\Bu}\|_{L^2(\Omega)} \leq C\|\widetilde{\psi}\|_{L^2(\Omega)}  + CM_7^\frac12 \mathcal{E}_0^\frac12
\end{equation}  
and 
\begin{equation}\label{convergence-77}  
\begin{aligned}
    \|\widetilde{\Bu}\|_{L^4(\Omega)} 
    \leq&~C  \|\nabla \widetilde{\psi}\|_{L^2(\Omega)} ^\frac12 \| \widetilde{\psi}\|_{L^2(\Omega)} ^\frac12 +C\|\widetilde{\psi}\|_{L^2(\Omega)} \\
    &~+ C(M_7^\frac14 + M_{10}^\frac14 )\mathcal{E}_0^\frac14  \|\nabla \widetilde{\psi}\|_{L^2(\Omega)} ^\frac12   + C(M_7^\frac12 +M_{10}^\frac12 )\mathcal{E}_0^\frac12.
\end{aligned}
\end{equation}

With \eqref{convergence-76}--\eqref{convergence-77} at hand, we now repeat the scalar energy argument of Step~3, keeping track of the weaker powers of $\mathcal E_0$.  The difference $\widetilde\psi$ satisfies \eqref{convergence-29}.  Testing that equation by $\widetilde\psi$ and estimating the convection term with \eqref{convergence-77} gives, after Young's inequality,
\begin{equation}\nonumber\label{eq:alpha0-psi-energy} 
 \frac{\dd}{\dd t}\|\widetilde\psi\|_{L^2(\Omega)}^2
+ \|\nabla\widetilde\psi\|_{L^2(\Omega)}^2
\leq C M_7^2\|\widetilde\psi\|_{L^2(\Omega)}^2
+C(M_7^3+M_{10}^3)\mathcal E_0
+C\mathcal E_0\|\mathcal L^0\psi^0\|_{L^2(\Omega)}^2. 
\end{equation}
 Here the term $\mathcal E_0^{\frac12}\|\nabla\widetilde\psi\|_{L^2}$ arising from the square of the $\mathcal E_0^{\frac14}$ contribution in \eqref{convergence-77} has been absorbed by Young's inequality.  The terms involving $D^\varepsilon-D^0$ are treated exactly as in Step~3, using their support in the shrinking layer. Since $\widetilde\psi(0)=0$, Gr\"onwall's inequality and Proposition~\ref{H^1} yield
\[
\|\widetilde\psi(t)\|_{L^2(\Omega)}^2\leq C(t)\mathcal E_0.
\]
Combining this estimate with \eqref{eq:alpha0-pressure-plus}, \eqref{eq:alpha0-pressure-minus}, and \eqref{convergence-76} gives the asserted $O(\mathcal E_0^{\frac12})$ convergence for $\alpha=0$.

{\bf Step 6. Modified pressure for $\alpha\in (0,\infty)$.} Now we prove Theorem \ref{thm:convergence} in the case $\alpha\in (0,\infty)$. For finite $\alpha$, the limit pressure $p^0$ is only piecewise $H^1$ and may jump across $\Gamma$, whereas $p^\varepsilon\in H^1(\Omega)$. To compare the two pressure fields and their jumps across the interface $\Gamma$, a modified pressure is introduced as follows. We denote the trace of $p^\varepsilon$ on the interface $\{z=\frac{h_0}{2}\}$ by $g$ and expand $g$ in Fourier series as
\[ g(x) = \sum_{n\in \mathbb{Z}} g_n e^{\ii \frac{2\pi}{L} n x},\quad \text{ where }  \quad g_n = \frac{1}{L}\int_{0}^{L} g(x) e^{- \ii   \frac{2\pi}{L}nx} \dd x.\]
For each $n$, define 
\[ \phi_n(z) := \frac{\cosh\left( \beta (z - \frac{h_0}{4}) \right)}
                  {\cosh\left( \frac{\beta h_0}{4} \right)},
\qquad 0 \le z \le \frac{h_0}{2},\]
where $\beta := \frac{2\pi}{L} |n|$. It is easy to see that $\phi_n$ satisfies 
 \begin{equation}\label{convergence-38}
\phi_n(0) = \phi_n\left(\tfrac{h_0}{2}\right) = 1 \text{ and }\quad \phi_n'' =\beta^2\phi_n.
 \end{equation}
Now the modified pressure $\bar{p}^\varepsilon$ in $\Omega_+$ is defined by
\begin{equation}\label{convergence-39}
 \bar{p}^\varepsilon(x,z) := 
 \left\{\begin{aligned}
&p^\varepsilon  , &&\text{ in }\Omega_{+,\varepsilon}, \\
&\sum_{n\in\mathbb{Z}}   g_n \phi_n(z)  e^{ \ii \frac{2\pi}{L} n x}, &&\text{ in }\Omega_{+}\cap \Omega_{0,\varepsilon}.
\end{aligned}\right.
\end{equation}
Then it follows that  
\[\bar{p}^\varepsilon|_{z=\frac{h_0}{2}} =\bar{p}^\varepsilon|_{z=0} = g\quad \text{ and }\quad \Delta \bar{p}^\varepsilon = 0\quad \text{ in } \quad\Omega_+\cap \Omega_{0,\varepsilon}.\]
This implies $\bar{p}^\varepsilon\in H^1(\Omega_+)$. In particular, we have 
\[
\begin{aligned}
\|\bar{p}^\varepsilon\|_{H^1(\Omega_+\cap \Omega_{0,\varepsilon})}^2
\leq&~C \sum_{n\in\mathbb{Z}} |g_n|^2 
   \int_0^{\frac{h_0}{2}}  |\phi_n(z)|^2 + |\phi_n'(z)|^2 +\beta^2 |\phi_n(z)|^2   \dd z \\
   \leq&~C \sum_{n\in\mathbb{Z}} |g_n|^2 
  \left( \frac{h_0}{2}+\int_0^{\frac{h_0}{2}}  |\phi_n'(z)|^2 +\beta^2 |\phi_n(z)|^2   \dd z\right) \\
\leq&~ C\sum_{n\in\mathbb{Z}} |g_n|^2 \left( \frac{h_0}{2}+ 2\beta \tanh\!\left(\frac{\beta h_0}{4}\right)\right)\\
\leq&~Ch_0\sum_{n\in\mathbb{Z}} |g_n|^2  (1+\beta^2)\\
=&~Ch_0 \|g\|_{H^1(\mathbb{T}_L)}^2.
\end{aligned}
\]
For the zero Fourier mode, $\beta=0$ and $\phi_0\equiv1$, so the same bound is immediate; for $\beta>0$ we used $\tanh s<s$. Then it follows from the trace theorem that there exists a constant independent of $\varepsilon$ such that
\begin{equation}\label{convergence-40}
\|\bar{p}^\varepsilon\|_{H^1(\Omega_+\cap \Omega_{0,\varepsilon})} \leq Ch_0^\frac12\|g\|_{H^1(\mathbb{T}_L)} \leq C h_0^\frac12\|p^\varepsilon\|_{H^2(\Omega_{+,\varepsilon})} .
% \|\bar{p}^\varepsilon\|_{H^1(\Omega_+)} \leq \|\bar{p}^\varepsilon\|_{H^1(\Omega_{+,\varepsilon})}+ \|\bar{p}^\varepsilon\|_{H^1(\Omega_+\cap \Omega_{0,\varepsilon})} \le \|p^\varepsilon\|_{H^1(\Omega_+,\varepsilon)}+ C \|g\|_{H^\frac12(\mathbb{T}_L)}\leq C \|p^\varepsilon\|_{H^1(\Omega_+,\varepsilon)}.
\end{equation}
% Furthermore, for $\psi^\varepsilon\in H_0^1(\Omega)$, the pressure $p^\varepsilon$ is piece wise $H^2$ and the trace $g$ belongs to $H^{\frac{3}{2}}(\mathbb{T}_{L})$. Consequently, we have  
% \begin{equation}\label{convergence-41}
% \begin{aligned}
% \|\bar{p}^\varepsilon\|_{H^2(\Omega_+\cap\Omega_{0,\varepsilon})}^2 
%  =&~\sum_{n\in \mathbb{Z}} |g_n|^2 \int_0^{\frac{h_0}{2}}
%    \beta^4 |\phi_n|^2 + 2\beta^2 |\phi_n'|^2 +|\phi_n''|^2  \dd z \\
%    =&~\sum_{n\in \mathbb{Z}} |g_n|^2 \int_0^{\frac{h_0}{2}}
%    \beta^4 |\phi_n|^2 + 2\beta^2 |\phi_n'|^2  \dd z \\
% =&~ \sum_{n\in \mathbb{Z}} |g_n|^2  \left(3\beta^3 \tanh\left( \frac{\beta h_0}{4} \right) - \frac{\beta^4 h_0}{4} \operatorname{sech}^2\left( \frac{\beta h_0}{4} \right)\right)\\
% \leq &~ C \sum_{n\in \mathbb{Z}} |g_n|^2  \left( \beta^3  +  e^{-\frac{\beta h_0}{2}}  \beta^4 h_0  \right)\\
% \leq &~ C \sum_{n\in \mathbb{Z}} |g_n|^2   \beta^3\leq C\|g\|_{H^{\frac{3}{2}}(\mathbb{T}_{L})}^2 \leq C \|p^\varepsilon\|_{H^2(\Omega_+,\varepsilon)}^2 
% \end{aligned}
% \end{equation}
% and 
% \begin{equation}\label{convergence-42} 
% \|\bar{p}^\varepsilon\|_{H^2(\Omega_+)} \leq C \|p^\varepsilon\|_{H^2(\Omega_+,\varepsilon)}. 
% \end{equation}

Analogously, we can also define the modified pressure in $\Omega_-$, which satisfies
\begin{equation}\label{convergence-43}
\|\bar{p}^\varepsilon\|_{H^1(\Omega_-\cap \Omega_{0,\varepsilon})} \leq  C h_0^\frac12\|p^\varepsilon\|_{H^2(\Omega_{-,\varepsilon})} .
  % \|\bar{p}^\varepsilon\|_{H^1(\Omega_-)}\le C \|p^\varepsilon\|_{H^1(\Omega_-,\varepsilon)}\quad \text{ and }\quad\|\bar{p}^\varepsilon\|_{H^2(\Omega_-)}\le C \|p^\varepsilon\|_{H^2(\Omega_-,\varepsilon)}.
\end{equation} 
We henceforth write \(\bar p^\varepsilon=(\bar p_+^\varepsilon,\bar p_-^\varepsilon)\in H^1(\Omega_+)\times H^1(\Omega_-)\) for this broken extension; no continuity across \(\Gamma\) is imposed.

% Let us define
% \[\bar{p}^\varepsilon_{+}\left(x, z\right):= \begin{cases}
% p^\varepsilon(x,z),\quad& z\in \left(\tfrac{h_0}{2},H\right),\\
% p^\varepsilon(x,\tfrac{h_0}{2}),\quad& z\in \left(0,\tfrac{h_0}{2}\right)
% \end{cases}
% \]
% and 
% \[\bar{p}^\varepsilon_{-}\left(x, z\right):= \begin{cases}
% p^\varepsilon(x,z),\quad& z\in \left(-H, -\tfrac{h_0}{2}\right),\\
% p^\varepsilon(x,-\tfrac{h_0}{2}),\quad& z\in \left(-\tfrac{h_0}{2},0\right).
% \end{cases}
% \]
% It is easy to check that $\bar{p}^\varepsilon_\pm\in H^1(\Omega_\pm)$ with 
% \begin{equation}\nonumber
% \begin{aligned}
%  \|\bar{p}^\varepsilon_\pm\|_{H^1(\Omega_\pm)} \leq&~ \|p^\varepsilon \|_{H^1(\Omega_{\pm,\varepsilon})} + Ch_0^\frac12\|\bar{p}^\varepsilon_\pm\|_{H^1(\Gamma)} \leq C\|p^\varepsilon \|_{H^\frac32(\Omega_{\pm,\varepsilon})} \leq C\|p^\varepsilon \|_{H^2(\Omega_{\pm,\varepsilon})} .
% \end{aligned}
% \end{equation} 
  
{\bf Step 7. Convergence of $(\Bu^\varepsilon,p^\varepsilon,\psi^\varepsilon)$ when $\alpha\in (0,\infty)$.} In the rest of the proof, we denote 
\[\widetilde{p}:=\bar p^\varepsilon -p^0-c_\varepsilon,\]
where
\[
c_\varepsilon
:=
\frac{1}{|\Omega|}\int_{\Omega}\bar p^\varepsilon -p^0\dd \Bx.
\]
Then we have 
\[\int_{\Omega}\widetilde{p} \dd \Bx=0. \]

Now we are in a position to estimate $\widetilde{p}$. For the limit problem, we have 
\begin{equation}\label{convergence-44}
\int_{\Omega_\pm}K_\pm(\nabla p^0+\psi^0 \Be_z) \cdot\nabla w^\pm\dd \Bx \pm \alpha  \int_{\Gamma} \llbracket p^0 \rrbracket _\Gamma  w^\pm  \dd \sigma =0, 
\end{equation}
for any $w^\pm\in H^1(\Omega_\pm)$. Moreover, by extension theorem, there exists a function $W^\pm\in H^1(\Omega)$ such that $W^\pm=w^\pm$ in $\Omega_{\pm}$ and
\begin{equation}\label{convergence-45}
\|W^\pm\|_{H^1(\Omega)} \leq C\|w^\pm\|_{H^1(\Omega_{\pm})},
\end{equation}
where the constant $C$ is independent of $w^\pm$. By introducing the truncation function
\begin{equation}\label{convergence-46}
\zeta_+(x,z) =
\begin{cases}
1, & z \ge \frac{h_0}{2},\\ 
\dfrac{2z+h_0}{2h_0}, & -\frac{h_0}{2} < z < \frac{h_0}{2},\\
0,&z< -\frac{h_0}{2},
\end{cases}
\end{equation}
and taking the testing function $q= \zeta_+ W^+\in H^1(\Omega)$ in the weak formula \eqref{weakform-p} gives 
\begin{equation}\label{convergence-47}
\int_{\Omega_{+,\varepsilon}} K_+ (\nabla p^\varepsilon + \psi^\varepsilon \Be_z) \cdot \nabla w^+ \dd \Bx + \frac{K_0}{h_0} \int_{0}^L  \llbracket p^\varepsilon \rrbracket  w^+(x,0)  \dd x=I_+,
\end{equation}
where $\llbracket p^\varepsilon \rrbracket(x) = p^\varepsilon(x,\tfrac{h_0}{2}) - p^\varepsilon(x,-\tfrac{h_0}{2})$ and 
\begin{equation}\label{convergence-48}
\begin{aligned}
I_+ =&~ \int_{\Omega_{0,\varepsilon}} \Bu^\varepsilon \cdot \nabla W^+ \zeta_+ \dd \Bx  -\frac{K_0}{h_0}   \int_{\Omega_{0,\varepsilon}}    \psi^\varepsilon  W^+(x,0) \dd \Bx \\
&~+ \frac{1}{h_0} \int_{\Omega_{0,\varepsilon}} u_z^\varepsilon (W^+(x,z) -W^+ (x,0))\dd \Bx .
\end{aligned}
\end{equation}
We claim that there exists a constant $C$ independent of $\varepsilon$ such that
  \begin{equation}\label{convergence-49}
    |I_+ |
    \le C K_0^\frac12 \|\psi^\varepsilon\|_{H^1(\Omega)}  
       \|w^+\|_{H^1(\Omega_+)} .
  \end{equation}
To see this, it suffices to estimate all the terms in \eqref{convergence-48}. Using Lemma \ref{lem:thin-layer-L2} and trace theorem, one has 
  \begin{equation}\label{convergence-50}
    \begin{aligned}
      \frac{K_0}{h_0}\left|\int_{\Omega_{0,\varepsilon}}\psi^\varepsilon W^+(x,0)\dd \Bx\right|
      \leq&~C\frac{K_0}{h_0}h_0^\frac12 \|\psi^\varepsilon\|_{L^2(\Omega_{0,\varepsilon})}
        \|W^+(\cdot,0)\|_{L^2(\Gamma)}\\
       \leq&~C K_0 \|\psi^\varepsilon\|_{H^1(\Omega)}\|W^+\|_{H^1(\Omega)}\\
      \leq&~C K_0 \|\psi^\varepsilon\|_{H^1(\Omega)}\|w^+\|_{H^1(\Omega_+)} .
    \end{aligned}
    \end{equation} 
By H\"older's inequality, Proposition \ref{uniform-p} and Lemma \ref{lem:thin-layer-L2}, one has 
    \begin{equation}\label{convergence-51}
    \begin{aligned}
      \left|\int_{\Omega_{0,\varepsilon}} \Bu^\varepsilon\cdot\nabla W^+ \zeta_+\dd \Bx\right|
      \leq&~\|\Bu^\varepsilon\|_{L^2(\Omega_{0,\varepsilon})}\|\nabla W^+\|_{L^2(\Omega_{0,\varepsilon})} \\
      %  \leq&~C  h_0^\frac12 \|\Bu^\varepsilon\|_{H^1(\Omega_{0,\varepsilon})} \|\nabla W^+\|_{L^2(\Omega)}\\
       \leq&~C K_0^\frac12   \|\psi^\varepsilon\|_{L^2(\Omega)} \|W^+\|_{H^1(\Omega)}\\
       \leq&~C K_0^\frac12  \|\psi^\varepsilon\|_{L^2(\Omega)} \|w^+\|_{H^1(\Omega_{+})}.
    \end{aligned}
    \end{equation}
 Similarly, noting $\nabla\zeta_+ = (0,\tfrac{1}{h_0})$ on $\Omega_{0,\varepsilon}$, one uses Poinc\'are inequality, Proposition \ref{uniform-p} and Lemma \ref{lem:thin-layer-L2} to obtain
    \begin{equation}\label{convergence-52}
\begin{aligned}
 &~ \left|  \int_{\Omega_{0,\varepsilon}} \Bu^\varepsilon \cdot\nabla \zeta_+ (W^+(x,z) -W^+(x,0))\dd \Bx  \right|\\
 \leq&~\frac{1}{h_0}\|u_z^\varepsilon\|_{L^2(\Omega_{0,\varepsilon})} 
  \|W^+(x,z) -W^+ (x,0) \|_{L^2(\Omega_{0,\varepsilon})} \\
  \leq&~C\|u_z^\varepsilon\|_{L^2(\Omega_{0,\varepsilon})} 
  \|\nabla W^+ \|_{L^2(\Omega_{0,\varepsilon})} \\
% \leq&~C\|\Bu^\varepsilon\|_{L^2(\Omega_{0,\varepsilon})} \|\nabla W^+\|_{L^2(\Omega_{0,\varepsilon})} \\
% \leq&~C h_0^\frac12\|\Bu^\varepsilon\|_{H^1(\Omega_{0,\varepsilon})}  \|\nabla W^+\|_{L^2(\Omega)} \\
\leq&~C  K_0^\frac12   \|\psi^\varepsilon\|_{L^2(\Omega)} \|w^+\|_{H^1(\Omega_{+})}.
\end{aligned}
\end{equation}
Then we obtain \eqref{convergence-49}.  Analogously, using the truncation function 
\begin{equation}\label{convergence-53}
\zeta_-(x,z) =
\begin{cases}
1, & z \leq  -\frac{h_0}{2},\\ 
\dfrac{-2z+h_0}{2h_0}, & -\frac{h_0}{2} < z < \frac{h_0}{2},\\
0,&z> \frac{h_0}{2},
\end{cases}
\end{equation} 
one can also show that 
\begin{equation}\label{convergence-54}
\int_{\Omega_{-,\varepsilon}} K_- (\nabla p^\varepsilon + \psi^\varepsilon \Be_z) \cdot \nabla w^- \dd \Bx -\frac{K_0}{h_0} \int_{0}^L  \llbracket p^\varepsilon \rrbracket  w^-(x,0)  \dd x=I_-, 
\end{equation}
for any  $w^-\in H^1(\Omega_-)$, where 
\begin{equation}\label{convergence-55}
\begin{aligned}
I_- =&~ \int_{\Omega_{0,\varepsilon}} \Bu^\varepsilon \cdot \nabla W^- \zeta_- \dd \Bx + \frac{K_0}{h_0}   \int_{\Omega_{0,\varepsilon}}    \psi^\varepsilon  W^-(x,0) \dd \Bx \\
&~- \frac{1}{h_0} \int_{\Omega_{0,\varepsilon}} u_z^\varepsilon (W^-(x,z) -W^- (x,0))\dd \Bx 
\end{aligned}
\end{equation}
satisfies the similar estimate as \eqref{convergence-49}, i.e., there exists a constant $C$ independent of $\varepsilon$ such that  
\begin{equation}\label{convergence-56}
|I_-| \le C K_0^\frac12\|\psi^\varepsilon\|_{H^1(\Omega)}  \|w^-\|_{H^1(\Omega_-)} .
\end{equation}

With the modified pressure $\bar{p}^\varepsilon$, the weak formulations \eqref{convergence-47} and \eqref{convergence-54} can be rewritten as 
\begin{equation}\label{convergence-57}
\begin{aligned}
&~\int_{\Omega_{\pm}} K_\pm (\nabla \bar{p}^\varepsilon + \psi^\varepsilon \Be_z) \cdot \nabla w ^\pm \dd \Bx  \pm \frac{K_0}{h_0} \int_{\Gamma}  \llbracket \bar{p}^\varepsilon \rrbracket_{\Gamma} w ^\pm  \dd \sigma \\
=&~ I_\pm + \int_{\Omega_{\pm} \cap \Omega_{0,\varepsilon}} K_\pm (\nabla \bar{p}^\varepsilon+ \psi^\varepsilon \Be_z) \cdot \nabla w ^\pm \dd \Bx. 
\end{aligned}
\end{equation}
Subtracting \eqref{convergence-44} from \eqref{convergence-57} gives 
\begin{equation}\label{convergence-58}
\begin{aligned}
    &~\int_{\Omega_{\pm}} K_\pm (\nabla \widetilde{p}+ \widetilde{\psi} \Be_z) \cdot \nabla w ^\pm \dd \Bx \pm  \alpha \int_{\Gamma}  \llbracket \widetilde{p}\rrbracket_{\Gamma} w ^\pm  \dd \sigma  \\
  = &~I_\pm \mp \left(\frac{K_0}{h_0} -\alpha \right)\int_{\Gamma}  \llbracket \bar{p}^\varepsilon \rrbracket_{\Gamma} w ^\pm  \dd \sigma  + \int_{\Omega_{\pm} \cap \Omega_{0,\varepsilon}} K_\pm (\nabla \bar{p}^\varepsilon+ \psi^\varepsilon \Be_z) \cdot \nabla w ^\pm \dd \Bx.
\end{aligned}
\end{equation}
% where 
% \begin{equation}\label{convergence-59}
% \widetilde{p}:=\bar{p}^\varepsilon -p^0\quad \text{ and }\quad\widetilde{\psi}:=\psi^\varepsilon-\psi^0
% \end{equation}

Taking the test function $w^\pm=\widetilde{p}|_{\Omega_\pm}$, respectively, and summing the result equations, we obtain 
\begin{equation}\label{convergence-60}
\begin{aligned}
&~\int_{\Omega_+\cup \Omega_-}K^0|\nabla \widetilde{p}|^2\dd \Bx  + \alpha \int_{\Gamma}  \llbracket \widetilde{p}\rrbracket_{\Gamma}^2 \dd \sigma  \\
= &~-  \int_{\Omega_+\cup \Omega_-}K^0\widetilde{\psi}\partial_z \widetilde{p}  \dd \Bx 
- \left(\frac{K_0}{h_0} -\alpha \right)\int_{\Gamma}  \llbracket \bar{p}^\varepsilon \rrbracket_{\Gamma} \llbracket \widetilde{p} \rrbracket_{\Gamma} \dd \sigma  \\
&~
 + \int_{ (\Omega_+\cap \Omega_{0,\varepsilon}) \cup (\Omega_-\cap \Omega_{0,\varepsilon}) } K^0 (\nabla \bar{p}^\varepsilon+ \psi^\varepsilon \Be_z) \cdot \nabla \widetilde{p}\dd \Bx + I_+ +I_-.
\end{aligned}
\end{equation}
% By Lemma \ref{lem:generalized-Poincare}, we have
% \begin{equation}\nonumber
% \int_{\Omega_+\cup \Omega_-}K^0|\nabla \widetilde{p}|^2\dd \Bx  + \alpha \int_{\Gamma}  \llbracket \widetilde{p}\rrbracket_{\Gamma}^2 \dd \sigma \ge \eta \|\widetilde{p}\|_{H^1(\Omega_+\cup \Omega_-)}^2,
% \end{equation}
% for some $\eta>0$. 
Now we estimate the terms on the right-hand side of \eqref{convergence-60}.
Using Cauchy-Schwarz inequality gives
\begin{equation}\label{convergence-61}
  -  \int_{\Omega}K^0\widetilde{\psi}\partial_z \widetilde{p}  \dd \Bx  \leq  \int_{\Omega}K^0|\widetilde{\psi}|^2\dd \Bx+ \frac1 4 \int_{\Omega_+\cup \Omega_-} K^0|\nabla \widetilde{p}|^2\dd \Bx.
\end{equation}
Using the trace theorem and Proposition \ref{uniform-p}, we have
\begin{equation}\label{convergence-62}
\begin{aligned}
&~-\left(\frac{K_0}{h_0} -\alpha \right)\int_{\Gamma}  \llbracket \bar{p}^\varepsilon \rrbracket_{\Gamma} \llbracket \widetilde{p} \rrbracket_{\Gamma} \dd \sigma \\
% \leq &~\frac{\alpha}{2}\int_{\Gamma}  \llbracket \widetilde{p}\rrbracket_{\Gamma}^2 \dd \sigma   + C\left|\frac{K_0}{h_0} -\alpha \right|^2 \|\llbracket \bar{p}^\varepsilon \rrbracket_{\Gamma} \|_{L^2(\Gamma)}^2\\
% \leq &~\frac{\alpha}{4}\int_{\Gamma}  \llbracket \widetilde{p}\rrbracket_{\Gamma}^2 \dd \sigma    + C\left|\frac{K_0}{h_0} -\alpha \right|^2  (\|p^\varepsilon(x,\tfrac{h_0}{2}) \|^2_{L^2(\mathbb{T}_L)} +\| p^\varepsilon(x,-\tfrac{h_0}{2})\|_{L^2(\mathbb{T}_L)}^2 ) \\
\leq &~\frac{\alpha}{4}\int_{\Gamma}  \llbracket \widetilde{p}\rrbracket_{\Gamma}^2 \dd \sigma    + C\left|\frac{K_0}{h_0} -\alpha \right|^2  (\|p^\varepsilon \|^2_{H^1(\Omega_{+,\varepsilon})} +\| p^\varepsilon \|_{H^1(\Omega_{-,\varepsilon})}^2 )\\
\leq&~\frac{\alpha}{4}\int_{\Gamma}  \llbracket \widetilde{p}\rrbracket_{\Gamma}^2 \dd \sigma    + C\left|\frac{K_0}{h_0} -\alpha \right|^2 \|\psi^\varepsilon\|_{L^2(\Omega)}^2  .
\end{aligned}
\end{equation}
By Lemma \ref{lem:thin-layer-L2}, Propositions \ref{uniform-p} and \eqref{convergence-40}-\eqref{convergence-43}, one has 
\begin{equation}\label{convergence-63}
\begin{aligned}
&~\int_{\Omega_\pm\cap \Omega_{0,\varepsilon}} K^0 (\nabla \bar{p}^\varepsilon+ \psi^\varepsilon \Be_z) \cdot \nabla \widetilde{p}\dd \Bx\\
\leq&~
\frac{1}{4}\int_{\Omega_+\cup \Omega_-} K^0|\nabla \widetilde{p}|^2\dd \Bx +C\int_{\Omega_\pm\cap \Omega_{0,\varepsilon}}|\nabla \bar{p}^\varepsilon|^2+ |\psi^\varepsilon|^2 \dd \Bx \\
% \leq&~
% \frac{1}{4}\int_{\Omega_+\cup \Omega_-} K^0|\nabla \widetilde{p}|^2\dd \Bx +Ch_0\|p^\varepsilon\|_{\Omega_{\pm,\varepsilon}}\|\psi^\varepsilon\|_{H^1(\Omega)} \\
\leq&~
\frac{1}{4}\int_{\Omega_+\cup \Omega_-} K^0|\nabla \widetilde{p}|^2\dd \Bx +Ch_0(\|p^\varepsilon\|_{H^2(\Omega_{\pm,\varepsilon})}^2+\|\psi^\varepsilon\|_{H^1(\Omega)}^2)\\
\leq&~
\frac{1}{4}\int_{\Omega_+\cup \Omega_-} K^0|\nabla \widetilde{p}|^2\dd \Bx +Ch_0\|\psi^\varepsilon\|_{H^1(\Omega)}^2.
\end{aligned}
\end{equation} 
Furthermore,  using \eqref{convergence-49}, \eqref{convergence-56}, Cauchy-Schwarz inequality and Lemma \ref{lem:generalized-Poincare}, we have
\begin{equation}\label{convergence-64}
\begin{aligned}
|I_\pm|  \leq  &~C K_0^\frac12    \|\psi^\varepsilon\|_{H^1(\Omega)}  
\|\widetilde{p}\|_{H^1(\Omega_\pm)} \\
       \leq&~C K_0^\frac12\|\psi^\varepsilon\|_{H^1(\Omega)}\left(\int_{\Omega_+\cup \Omega_-}K^0|\nabla \widetilde{p}|^2\dd \Bx  + \alpha \int_{\Gamma}  \llbracket \widetilde{p}\rrbracket_{\Gamma}^2 \dd \sigma  \right)^\frac12\\
\leq&~\frac18\left(\int_{\Omega_+\cup \Omega_-}K^0|\nabla \widetilde{p}|^2\dd \Bx  + \alpha \int_{\Gamma}  \llbracket \widetilde{p}\rrbracket_{\Gamma}^2 \dd \sigma\right)+ CK_0\|\psi^\varepsilon\|_{H^1(\Omega)}^2.
\end{aligned}
\end{equation}
Substituting \eqref{convergence-61}-\eqref{convergence-64} into \eqref{convergence-60} and using Proposition \ref{H^1} give
\begin{equation}\label{convergence-65}
\begin{aligned}
  \| \nabla \widetilde{p}\|_{L^2(\Omega_\pm  )}^2  + \alpha \int_{\Gamma}  \llbracket \widetilde{p}\rrbracket_{\Gamma}^2 \dd \sigma    \leq &~  C\|\widetilde{\psi}\|_{L^2(\Omega)}^2+ C \mathcal{E}_\alpha^2\|\psi^\varepsilon\|_{H^1(\Omega)}^2  \\
  \leq&~  C\|\widetilde{\psi}\|_{L^2(\Omega)}^2+ CM_7 \mathcal{E}_\alpha^2.
\end{aligned}
\end{equation}
% where 
% \begin{equation}\label{convergence-66}
% \mathcal{E}_\alpha := h_0^\frac12 + K_0^\frac12+\left|\frac{K_0}{h_0} -\alpha \right|.
% \end{equation} 
A similar argument also holds for $\partial_x \Bu^\varepsilon$ and $\partial_x p^\varepsilon$ since $\Bu^\varepsilon$ and $p^\varepsilon$ are determined by $\psi^\varepsilon$ through linear equations with coefficients independent of $x$. This gives 
\begin{equation}\label{convergence-67}
\begin{aligned}
\| \nabla \partial_x \widetilde{p}\|_{L^2(\Omega_\pm  )}^2  + \alpha \int_{\Gamma}  \llbracket \partial_x \widetilde{p}\rrbracket_{\Gamma}^2 \dd \sigma \leq&~  C\|\partial_x\widetilde{\psi}\|_{L^2(\Omega)}^2+ C \mathcal{E}_\alpha^2\|\partial_x\psi^\varepsilon\|_{H^1(\Omega)}^2   \\
 \leq&~  C\|\partial_x\widetilde{\psi}\|_{L^2(\Omega)}^2+ CM_{10} \mathcal{E}_\alpha^2.
\end{aligned}
\end{equation}
% Furthermore, by Propositions \ref{uniform-p}-\ref{uniform-limit-p} , Lemma \ref{lemmaA1} and \eqref{convergence-19}, we have
% \begin{equation}
% \begin{aligned}
% \|\nabla \widetilde{p}\|_{L^4(\Omega_{\pm,\varepsilon})} \leq &~C\|\nabla \widetilde{p}\|_{L^2(\Omega_{\pm,\varepsilon})}^\frac12 \|\widetilde{p}\|_{H^2(\Omega_{\pm,\varepsilon})}^\frac12 \\
% \leq &~C\|\nabla \widetilde{p}\|_{L^2(\Omega_{\pm,\varepsilon})}^\frac12 (\|p^\varepsilon\|_{H^2(\Omega_{\pm,\varepsilon})}^\frac12 + \|p^0\|_{H^2(\Omega_{\pm,\varepsilon})}^\frac12 )\\
% \leq &~C\|\nabla \widetilde{p}\|_{L^2(\Omega_{\pm,\varepsilon})}^\frac12 (\|\psi^\varepsilon\|_{H^1(\Omega)}^\frac12 + \|\psi^0\|_{H^1(\Omega)}^\frac12 )\\
% \leq &~C \|\widetilde{\psi}\|_{L^2(\Omega)} (\|\psi^\varepsilon\|_{H^1(\Omega)}^\frac12 + \|\psi^0\|_{H^1(\Omega)}^\frac12 ) +\mathcal{E}_\alpha(\|\psi^\varepsilon\|_{H^1(\Omega)}  + \|\psi^0\|_{H^1(\Omega)} ) .
% \end{aligned}
% \end{equation}
Since $\widetilde{p}$ satisfies the elliptic problem 
\begin{equation}\label{convergence-68}
-\Delta \widetilde{p} =\partial_z\widetilde{\psi},\quad \text{ in }\Omega_{\pm,\varepsilon},
\end{equation}
one has further
\begin{equation}\label{convergence-69}
\begin{aligned} \|\nabla^2\widetilde{p}\|_{L^2(\Omega_{\pm,\varepsilon})}^2 
 \leq&~C\|\nabla \partial_x\widetilde{p}\|_{L^2(\Omega_{\pm,\varepsilon})}^2+\|\partial_z \widetilde{\psi}\|_{L^2(\Omega_{\pm,\varepsilon})}^2\\
 \leq&~C\|\nabla \widetilde{\psi}\|_{L^2(\Omega)}^2+ CM_{10} \mathcal{E}_\alpha^2.
\end{aligned}
\end{equation} 

Finally, using \eqref{convergence-65}-\eqref{convergence-69} instead of \eqref{eq:alpha0-pressure-plus}-\eqref{convergence-69-0}, we can follow the argument in Step 5 to obtain the convergence of $\Bu^\varepsilon$ and $\psi^\varepsilon$. Then we complete the proof of Theorem \ref{thm:convergence}.

\end{proof}

\subsection{Convergence of global attractors}

We now investigate the convergence of the solutions of the Darcy-Boussinesq system \eqref{Sharps}--\eqref{bc} in the long-time regime. For each $\varepsilon\in[0,1]$, the solution operator $S_\varepsilon(t):L^2(\Omega)\to L^2(\Omega)$ is defined as 
\[
S_\varepsilon(t)\psi_{\mathrm{in}}^\varepsilon
:=
\psi^\varepsilon(t),
\]
where $\psi^\varepsilon$ is the solution to
problem \eqref{Sharps}--\eqref{bc} when $\varepsilon>0$, and to the limit
problem in \eqref{Sharps-limit}, \eqref{initial-limit}-\eqref{interface1-limit} when $\varepsilon=0$. By Propositions \ref{prop1} and \ref{prop1-limit}, $\{S_\varepsilon(t)\}_{t\geq0}$ is a continuous semigroup on $L^2(\Omega)$ for every $\varepsilon\in[0,1]$.

To study the global attractors associated with the semigroup $\{S_\varepsilon\}_{t\ge 0}$, we introduce the higher-regularity space
\[
\mathcal{X}
:=
\left\{
\psi\in H_0^1(\Omega):\partial_x\psi\in H_0^1(\Omega)
\right\},
\]
endowed with the norm
\[
\|\psi\|_{\mathcal{X}}^2
:=
\|\psi\|_{H^1}^2
+
\|\partial_x\psi\|_{H^1}^2.
\]
The following theorem establishes global attractors in the phase space $L^2(\Omega)$ and a bound for them that is uniform in the higher-regularity space $\mathcal X$.
\begin{theorem}
\label{thm:global-attractor-convergence}
Assume \eqref{alpha}. For every $\varepsilon\in[0,1]$, the continuous semigroup
$\{S_\varepsilon(t)\}_{t\geq0}$ possesses a global attractor
$\mathcal{A}_\varepsilon$, which is uniformly bounded in $\mathcal{X}$ and compact  in $L^2(\Omega)$. Furthermore, as $\varepsilon$ goes to $0^+$, the attractor $\mathcal{A}_\varepsilon$ converges to  $\mathcal{A}_0$ in the following sense,
\begin{equation}\nonumber
d(\mathcal{A}_{\varepsilon}, \mathcal{A}_0) \to  0,
\end{equation}
where $d(\mathcal{A}_{\varepsilon}, \mathcal{A}_0)$ is the Hausdorff semi-distance between $\mathcal{A}_{\varepsilon}, \mathcal{A}_0$ defined by 
\begin{equation}\nonumber
d(\mathcal{A}_{\varepsilon}, \mathcal{A}_0) := \sup_{f\in \mathcal{A}_{\varepsilon}} \operatorname{dist} (f,\mathcal{A}_0)= \sup_{f\in \mathcal{A}_{\varepsilon}} \inf_{g\in \mathcal{A}_0}\|f-g\|_{L^2(\Omega)}.
\end{equation}

% Moreover, the family of attractors is uniformly bounded in $\mathcal{X}$
% \[
% \sup_{\varepsilon\in[0,1]}
% \sup_{\psi\in\mathcal{A}_\varepsilon}
% \left(
% \|\psi\|_{H^1}
% +
% \|\partial_x\psi\|_{H^1}
% \right)
% <\infty.
% \]
% In particular,
% \[
% \bigcup_{\varepsilon\in[0,1]}\mathcal{A}_\varepsilon
% \]
% is relatively compact in $H$.

% Furthermore, assume that
% \[
% h_0(\varepsilon)\to0,\qquad
% K_0(\varepsilon)\to0,\qquad
% \frac{K_0(\varepsilon)}{h_0(\varepsilon)}
% \to\alpha\in(0,\infty)
% \]
% as $\varepsilon\to0$. Then the family of global attractors is upper
% semicontinuous at $\varepsilon=0$, namely,
% \[
% \operatorname{dist}_H
% \left(
% \mathcal{A}_\varepsilon,\mathcal{A}_0
% \right)
% \longrightarrow0
% \qquad\text{as }\varepsilon\to0.
% \]
\end{theorem}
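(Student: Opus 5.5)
The plan is to follow the standard abstract framework for upper semicontinuity of attractors, as in Hale--Raugel or Temam: a family of dissipative semigroups with a common compact absorbing set, plus convergence of $S_\varepsilon(t)$ to $S_0(t)$ uniformly on that set, yields $d(\mathcal A_\varepsilon,\mathcal A_0)\to0$.

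\textbf{Step 1 (existence and uniform bounds).} Fix $\varepsilon\in[0,1]$. I will build a common absorbing set in $\mathcal X$. By Proposition~\ref{L^2}, the ball $B_0=\{\|\psi\|_{L^2}^2\le \frac{M_1H^2}{\min_iD_i}+1\}$ absorbs every bounded set of $L^2(\Omega)$ after time $T_1$. For $t\ge T_1+2$, Propositions~\ref{H^1} and~\ref{H^1-x} give $\|\sqrt{D^\varepsilon}\nabla\psi^\varepsilon(t)\|^2\le M_5$ and $\|\sqrt{D^\varepsilon}\nabla\partial_x\psi^\varepsilon(t)\|^2\le M_9$. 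Here $M_1,M_5,M_9$ do not depend on $\varepsilon$ or on the data, and $T_1$ depends only on the $L^2$ bound. I also need $\psi(t),\partial_x\psi(t)\in H_0^1$. Horizontal periodicity, together with the Dirichlet condition at $z=\pm H$ (whose $x$-derivative also vanishes), gives this. So the set
\[
\mathcal B:=\{\psi\in\mathcal X:\ \|\psi\|_{\mathcal X}^2\le C(M_5+M_9+1)/\min_iD_i\}
\]
is absorbing for every $S_\varepsilon$. Since $\mathcal X\hookrightarrow H_0^1(\Omega)$ is compact in $L^2$ by Rellich, $\mathcal B$ is compact in $L^2(\Omega)$. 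Continuity of $S_\varepsilon(t)$ comes from Propositions~\ref{prop1} and~\ref{prop1-limit}. The classical theorem (Temam, Thm.~I.1.1) then gives $\mathcal A_\varepsilon=\omega(\mathcal B)\subset\mathcal B$. This set is compact in $L^2$, and it is bounded in $\mathcal X$ uniformly in $\varepsilon$ because it is invariant and hence contained in $\mathcal B$.

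\textbf{Step 2 (uniform finite-time convergence).} I will apply Theorem~\ref{thm:convergence} with common initial data $\psi_{\rm in}\in\mathcal A_\varepsilon\subset\mathcal B$, which is admissible since $\psi_{\rm in},\partial_x\psi_{\rm in}\in H_0^1$. The key point is that the constant $M_{11}(t,\|\psi_{\rm in}\|_{H^1},\|\partial_x\psi_{\rm in}\|_{H^1})$ is built monotonically from $M_7,M_8,M_{10}$, and these increase in the data norms. Hence
\[
\sup_{\psi_{\rm in}\in\mathcal B}\|S_\varepsilon(t)\psi_{\rm in}-S_0(t)\psi_{\rm in}\|_{L^2}\le M_{11}(t,R_{\mathcal B})\,\mathcal E_\alpha^{1/2}(\varepsilon)\to0
\]
for each fixed $t$. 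I expect the main (if modest) obstacle to be checking that every constant in Theorem~\ref{thm:convergence} depends on the data only through these norms. In particular, the term $\int_0^t\|\mathcal L^0\psi^0\|^2$ is controlled by $M_8$, and the $\varepsilon$-independent $M_{10}$ is used for both the prelimit and the limit solutions.

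\textbf{Step 3 (semicontinuity).} Let $\eta>0$. Since $\mathcal A_0$ attracts the bounded set $\mathcal B$ under $S_0$, I choose $t_\eta$ with $\sup_{g\in\mathcal B}\operatorname{dist}(S_0(t_\eta)g,\mathcal A_0)<\eta/2$. Take any $f\in\mathcal A_\varepsilon$. By invariance, $f=S_\varepsilon(t_\eta)g$ for some $g\in\mathcal A_\varepsilon\subset\mathcal B$. Then
\[
\operatorname{dist}(f,\mathcal A_0)\le\|S_\varepsilon(t_\eta)g-S_0(t_\eta)g\|_{L^2}+\operatorname{dist}(S_0(t_\eta)g,\mathcal A_0)<M_{11}(t_\eta,R_{\mathcal B})\mathcal E_\alpha^{1/2}(\varepsilon)+\eta/2.
\]
This is less than $\eta$ for all sufficiently small $\varepsilon$, and the bound is uniform in $f$. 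Therefore $d(\mathcal A_\varepsilon,\mathcal A_0)\to0$.
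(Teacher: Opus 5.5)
Your proposal is correct and matches the paper's proof in all essentials: a common absorbing set bounded in $\mathcal X$ built from Propositions~\ref{L^2}, \ref{H^1} and \ref{H^1-x}, existence of $\mathcal A_\varepsilon\subset\mathcal B$ from the standard theory in Temam, and upper semicontinuity obtained by writing $f=S_\varepsilon(T)g$ with $g\in\mathcal A_\varepsilon\subset\mathcal B$, then combining Theorem~\ref{thm:convergence} with the attraction of $\mathcal B$ by $\mathcal A_0$. The differences are only cosmetic: you argue directly with a tolerance $\eta$ rather than by contradiction, so you never need to pick a maximizer $a_k\in\mathcal A_{\varepsilon_k}$, and you spell out the monotone dependence of $M_{11}$ on the data norms that the paper compresses into the notation $M_{11}(T,\mathcal B_*)$.
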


\begin{proof} 
  We first prove the existence of global attractors $\mathcal{A}_\varepsilon$ for the semigroup $\{S_\varepsilon(t)\}_{t\geq0}$.
It follows from Proposition \ref{L^2} that, for every bounded set
$B\subset L^2(\Omega)$, there exists a time $T_B >0$, depending only on the
$L^2(\Omega)$-bound of $B$, such that
\[
\|S_\varepsilon(t)\psi_{\mathrm{in}}\|_{L^2(\Omega)}^2
\leq 
\frac{M_1H^2}{\min_i D_i}+1
\]
for any $\psi_{\mathrm{in}}\in B$,   $t\geq T_B $, and $\varepsilon\in[0,1]$.

By Propositions \ref{H^1} and \ref{H^1-x}, when $t\geq T_B+2$, one also
has
\[
\|\nabla S_\varepsilon(t)\psi_{\mathrm{in}}\|_{L^2(\Omega)}^2\leq \frac{M_5}{\min_i D_i} \quad \text{ and } \quad \|\nabla\partial_x S_\varepsilon(t)\psi_{\mathrm{in}}\|_{L^2(\Omega)}^2
\leq \frac{M_9}{\min_i D_i} ,
\]
for any $\psi_{\mathrm{in}}\in B$ and $\varepsilon\in[0,1]$.
 
Define
\[
\mathcal{B}_*
:=
\left\{
\psi\in\mathcal{X}:
\begin{array}{l}
\displaystyle
\|\psi\|_{L^2(\Omega)}^2\leq \frac{M_1H^2}{\min_i D_i}+1,\\
\displaystyle
\| \nabla\psi\|_{L^2(\Omega)}^2
\leq \frac{M_5}{\min_i D_i},\\ 
\displaystyle
\|\partial_x\nabla \psi\|_{L^2(\Omega)}^2
\leq  \frac{M_9}{\min_i D_i}
\end{array}
\right\}.
\]
Then  $\mathcal{B}_*$ is a common absorbing set for
the semigroup $\{S_\varepsilon(t)\}_{t\ge 0}$ for any $\varepsilon\in [0,1]$. More precisely, for
every bounded set $B\subset L^2(\Omega)$, there exists $T_B>0$, independent of
$\varepsilon$, such that
\[
S_\varepsilon(t)B\subset\mathcal{B}_*
\]
for every $t\geq T_B+2$ and every $\varepsilon\in[0,1]$.

Since $\mathcal{B}_*$ is bounded in $\mathcal{X}$, it is in particular
bounded in $H_0^1(\Omega)$. The compact embedding $H_0^1(\Omega)\Subset L^2(\Omega)$ therefore implies that $\mathcal{B}_*$ is relatively compact in $L^2(\Omega)$. It follows from the standard theory of dissipative dynamical systems that
each semigroup $S_\varepsilon(t)$ possesses a global attractor
$\mathcal{A}_\varepsilon$ in $L^2(\Omega)$; see, for instance,
\cite{Temam1997}. The attractor $\mathcal{A}_\varepsilon$ is compact in $L^2(\Omega)$, strictly invariant,
\[
S_\varepsilon(t)\mathcal{A}_\varepsilon
=
\mathcal{A}_\varepsilon,
\qquad t\geq0,
\]
and attracts every bounded subset of $L^2(\Omega)$. Since $\mathcal{B}_*$ absorbs every bounded set and
$\mathcal{A}_\varepsilon$ is invariant, we also have $\mathcal{A}_\varepsilon\subset\mathcal{B}_*$ for every $\varepsilon\in[0,1]$. Hence, 
\[
\sup_{\varepsilon\in[0,1]}
\sup_{\psi\in\mathcal{A}_\varepsilon}
\|\psi\|_{\mathcal{X}}
<\infty.
\]

To show the convergence, we assume for contradiction that the assertion is false. Then there exists a  constant $c >0$ and a sequence $\{\varepsilon_k\}$ converging to $0^+$ such that
\[ d(\mathcal{A}_{\varepsilon_k},\mathcal{A}_0) \geq c , \qquad\text{for all } k \in \mathbb{Z}_+.
\]
Due to the compactness of $\mathcal{A}_{\varepsilon_k}$ in $L^2(\Omega)$, there exists an $a_k \in \mathcal{A}_{\varepsilon_k}$ such that
\[\operatorname{dist} (a_k,\mathcal{A}_0) =d(\mathcal{A}_{\varepsilon_k},\mathcal{A}_0) \geq c.\]
On the other hand, since $\mathcal{A}_0$ attracts all bounded sets in $L^2(\Omega)$, we choose $T > 0$ to be sufficiently large such that
\[d(S_0(T) \mathcal{B}_*  ,\mathcal{A}_0)<\frac{c}{4}.\]
Since the set $\mathcal{A}_\varepsilon$ is invariant with respect to $S_\varepsilon(t)$ for any $t\ge 0$, i.e., $S_\varepsilon (t) \mathcal{A}_\varepsilon= \mathcal{A}_\varepsilon$, then for each $a_k \in \mathcal{A}_{\varepsilon_k}$, there exists $b_k \in \mathcal{A}_{\varepsilon_k}$ such that $S_{\varepsilon_k}(T)b_k= a_k$ for all $k$. This implies that $b_k\in \mathcal{A}_{\varepsilon_k} \subset  \mathcal{B}_*$ for all $k$. By Theorem \ref{thm:convergence}, we have
\begin{equation}\nonumber
\begin{aligned}
  \|a_k -S_0(T)b_k\|_{L^2(\Omega)} \leq&~ \|S_{\varepsilon_k}(T)b_k-S_0(T)b_k\|_{L^2(\Omega)}\\
  \leq&~ \mathcal{E}_\alpha^\frac12  M_{11}\left(T,\mathcal{B}_*\right)\\
  \leq&~\frac{c}{4},
\end{aligned}
\end{equation}
provided $k$ is sufficiently large. Hence,
\begin{equation}\nonumber
\begin{aligned}
  \operatorname{dist}(a_k, \mathcal{A}_0) \leq &~\|a_k -S_0(T)b_k\|_{L^2(\Omega)} + \operatorname{dist}(S_0(T)b_k, \mathcal{A}_0)\\
  \leq&~  \|a_k -S_0(T)b_k\|_{L^2(\Omega)} + d(S_0(T) \mathcal{B}_*, \mathcal{A}_0) \\
  \leq&~ \frac{c}{2}.
\end{aligned}
\end{equation}
This contradiction completes the proof of this theorem.
\end{proof}

\section{Concluding remarks and outlook}\label{sec:conclusion}

We have investigated a simultaneous geometric and constitutive singular limit for the nonlinear Darcy--Boussinesq system in a layered porous medium.  When the thickness $h_0$ and permeability $K_0$ of the intermediate layer vanish together, the ratio $\frac{K_0}{h_0}$ selects one of three effective hydraulic interface laws: an impermeable interface, a finite-resistance transmission law, or perfect hydraulic contact.  The analysis provides a rigorous validation of these reduced continuum models on two complementary time scales.  On every finite time interval we obtain quantitative convergence of the scalar field, velocity, and outer pressure gradients.  At the same time, estimates uniform in the vanishing parameters yield a common dissipative structure and allow the passage to the infinite-time limit through upper semicontinuity of the global attractors.

Several ingredients are used repeatedly in both parts of the analysis.  The finite-resistance regime requires a pressure extension across the collapsing layer and a generalized Poincar\'e inequality adapted to piecewise $H^1$ pressures with jumps.  The impermeable regime requires direct control of the normal flux when the jump penalty degenerates, whereas the perfect-contact regime requires thin-strip estimates whose constants remain uniform as the layer collapses.  These estimates are also what make the long-time theory uniform with respect to the singular parameters, rather than merely providing convergence on bounded time intervals.

The flat-layer geometry used here is mainly a device that keeps the presentation transparent.  We expect that the arguments can be extended to smoothly curved interfaces under a uniform geometric nondegeneracy assumption.  In particular, the nonvanishing layers should retain thicknesses bounded uniformly above and below, while the two boundaries of the collapsing layer may be curved provided their local separation is comparable to a single small scale; schematically, if $h_\varepsilon(x')$ denotes the local layer thickness, one assumes
\[
        c h_0(\varepsilon)\le h_\varepsilon(x')\le C h_0(\varepsilon)
\]
with constants $c,C>0$ independent of $\varepsilon$, together with uniform smoothness of the coordinate charts used to flatten the interfaces.  Under such assumptions, the present energy estimates, pressure-extension arguments, and thin-strip inequalities should persist after a uniformly controlled change of variables.  By contrast, if different portions of a curved thin layer collapse at genuinely different rates, new local degeneracies can arise and uniform estimates may depend sensitively on the geometry of the narrow region.  Closely related phenomena are well known for high-contrast elliptic problems in narrow regions; see, for example, the geometry-dependent gradient estimates of Dong, Li, and Yang~\cite{DongLiYang2025}.  Treating such nonuniformly collapsing layers for the nonlinear Darcy--Boussinesq system would require additional ideas and is left for future work.

A second natural direction is to allow the diffusivity $D_0$ in the thin layer to vanish simultaneously with $h_0$ and $K_0$.  In the present paper $D_0$ remains nondegenerate, which enforces continuity of the scalar field in the limit.  If $D_0\to0$, the additional distinguished ratio $\frac{D_0}{h_0}$ is expected to measure an effective thermal or solutal conductance of the collapsing layer, in much the same way that $\frac{K_0}{h_0}$ measures its hydraulic conductance.  The combined limit may therefore generate a two-parameter family of interface models in which both pressure and temperature/concentration can exhibit finite jumps.  A rigorous treatment would require weighted estimates that remain uniform as both permeability and diffusivity degenerate, as well as a reformulation of the dissipative phase space suitable for the resulting broken scalar field.  Establishing finite-time convergence and, especially, uniform long-time validity for this more singular coupled limit is an interesting problem for future investigation.

\appendix
\section{Some basic inequalities} 
For any $\varepsilon\in [0,1]$, we define the weighted Sobolev space
\begin{equation}\label{def:W-epsilon}
  \mathcal{W}^\varepsilon = \{\psi \in H_0^1(\Omega):~\partial_x \psi \in H^1(\Omega),~D^\varepsilon\partial_z \psi \in H^1(\Omega)\}
\end{equation}
endowed with norm
\begin{equation}\label{def:Wnorm-epsilon}
  \|\psi\|_{\mathcal{W}^\varepsilon} = \|\psi\|_{H^1(\Omega)}+\|\partial_x \psi\|_{H^1(\Omega)}+\|D^\varepsilon\partial_z \psi\|_{H^1(\Omega)}.
\end{equation}
Since $D^\varepsilon>0$ is piecewise constant and uniformly bounded from above and below, one could establish the Gagliardo-Nirenberg type inequality for the weighted space $\mathcal{W}^\varepsilon$. The following lemma is given in \cite{CNW2025}.
\begin{lemma}\label{lemmaA1}
  It holds that
    \begin{equation}\label{A1-0}
\|\psi\|_{L^4(\Omega)} \leq C\|\nabla\psi\|_{L^2(\Omega)}^\frac12\|\psi\|_{L^2(\Omega)}^\frac12, \quad \text{ for any }\psi\in H_0^1(\Omega).
    \end{equation}
    Furthermore, we have
    \begin{equation}\label{A1-1}
\|\nabla \psi\|_{L^4(\Omega)} \leq C\|\nabla\psi\|_{L^2(\Omega)}^\frac12\|\psi\|_{\mathcal{W}^\varepsilon}^\frac12, \quad\text{ for any }\psi\in \mathcal{W}^\varepsilon.
    \end{equation}
    Here the constant  $C$ depends only on $L,H$ and $D_i$.
  \end{lemma}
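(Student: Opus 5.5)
The first inequality \eqref{A1-0} is the two-dimensional Ladyzhenskaya inequality on the strip $\Omega=\mathbb{T}_L\times(-H,H)$. I would prove it by the standard one-dimensional argument. For $\psi\in C^\infty$ with $\psi|_{z=\pm H}=0$, periodic in $x$, write $|\psi(x,z)|^2\le 2\int_{-H}^{H}|\psi||\partial_z\psi|\dd z$. The Dirichlet condition in $z$ makes this hold without a lower-order term. For the horizontal direction, write $|\psi(x,z)|^2\le \frac1L\int_{\mathbb{T}_L}|\psi|^2\dd x+2\int_{\mathbb{T}_L}|\psi||\partial_x\psi|\dd x$; periodicity does not kill the mean, so a lower-order term appears. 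Multiplying the two bounds, integrating over $\Omega$, and applying Cauchy--Schwarz gives
\[
\|\psi\|_{L^4}^4\le C\|\psi\|_{L^2}\|\partial_z\psi\|_{L^2}\bigl(\|\psi\|_{L^2}^2+\|\psi\|_{L^2}\|\partial_x\psi\|_{L^2}\bigr).
\]
The Poincar\'e inequality in $z$, $\|\psi\|_{L^2}\le 2H\|\partial_z\psi\|_{L^2}$, absorbs the lower-order term and yields \eqref{A1-0}. Density of smooth functions in $H_0^1(\Omega)$ then concludes.

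For \eqref{A1-1}, the point is that $\nabla\psi$ is not in $H^1(\Omega)$ globally, since $\partial_z\psi$ jumps across the interfaces. The trick is to apply a Ladyzhenskaya-type inequality to the two functions $f_1=\partial_x\psi$ and $f_2=D^\varepsilon\partial_z\psi$, both of which lie in $H^1(\Omega)$ by the definition of $\mathcal{W}^\varepsilon$.

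For functions in $H^1(\Omega)$ without boundary conditions, the same product argument gives
\[
\|f\|_{L^4}^2\le C\|f\|_{L^2}\bigl(\|f\|_{L^2}+\|\nabla f\|_{L^2}\bigr).
\]
Here the vertical one-dimensional bound now carries a mean term $\frac1{2H}\int|f|^2\dd z$. Applying this to $f_1$ and $f_2$ gives $\|f_j\|_{L^4}^2\le C\|f_j\|_{L^2}\|\psi\|_{\mathcal{W}^\varepsilon}$, because $\|f_j\|_{L^2}+\|\nabla f_j\|_{L^2}\le C\|\psi\|_{\mathcal{W}^\varepsilon}$. Since $\min_iD_i\le D^\varepsilon\le\max_iD_i$ pointwise, we have $\|\partial_z\psi\|_{L^4}\le(\min_iD_i)^{-1}\|f_2\|_{L^4}$ and $\|f_2\|_{L^2}\le\max_iD_i\|\partial_z\psi\|_{L^2}$. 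Summing the two components gives $\|\nabla\psi\|_{L^4}^2\le C\|\nabla\psi\|_{L^2}\|\psi\|_{\mathcal{W}^\varepsilon}$, which is \eqref{A1-1}.

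The constant depends only on $L$, $H$ and $\min_iD_i,\max_iD_i$. It is independent of the interface positions, and hence of $h_0$ and $\varepsilon$, because no interface-adapted domain decomposition is used: the whole argument is carried out on the full strip, for functions that are globally $H^1$.

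The only delicate point is justifying the inequalities for nonsmooth $f_2$. I would argue that the one-dimensional fundamental-theorem bounds hold for a.e. line, since $H^1$ functions are absolutely continuous on a.e. line, so density in $H^1(\Omega)$ suffices. There is no real obstacle. The main thing to get right is using $D^\varepsilon\partial_z\psi$ rather than $\partial_z\psi$, so that the interface jumps never enter the estimate.
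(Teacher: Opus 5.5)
Your proof is correct. The paper gives no argument of its own: it cites \cite{CNW2025} and remarks only that the inequality holds because $D^\varepsilon$ is piecewise constant and bounded above and below. That is exactly the mechanism you make explicit by applying the $H^1$ Ladyzhenskaya bound to the globally $H^1$ fields $\partial_x\psi$ and $D^\varepsilon\partial_z\psi$ rather than to $\partial_z\psi$. Working on the full strip also shows directly that $C$ depends only on $L$, $H$, $\min_i D_i$ and $\max_i D_i$, not on $h_0$ or the interface positions, which is the uniformity the paper needs later.
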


  We also recall the equivalence between $\|\cdot\|_{\mathcal{W}^\varepsilon}$ and the norm associated with the operator 
\begin{equation}\label{def:L}
\mathcal{L}^\varepsilon\psi:= -\nabla\cdot (D^\varepsilon\nabla\psi).
\end{equation} 
  \begin{lemma}\label{lemma-equivalence}
    There exist two constants $C_l, C_u>0$ depending only on $H,L$ and $D_i$ such that
    \begin{equation} \label{A4-0-1}
C_l\|\mathcal{L}^\varepsilon\psi\|_{L^2(\Omega)} \leq \|\psi\|_{\mathcal{W}^\varepsilon} \leq C_u\|\mathcal{L}^\varepsilon  \psi\|_{L^2(\Omega)},\quad \text{ for any }\psi \in \mathcal{W}^\varepsilon.
    \end{equation}
  \end{lemma}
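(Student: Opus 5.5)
The plan is to prove both inequalities in \eqref{A4-0-1} directly from the structure of $\mathcal{L}^\varepsilon$. The key point is that $D^\varepsilon$ depends only on $z$ and takes values in $[\min_i D_i,\max_i D_i]$. Every constant will then depend only on these two bounds and on the Poincar\'e constant of $\Omega$ (hence on $H,L$), and never on the layer geometry. In particular they will be independent of $h_0$, which is what makes the lemma uniform in $\varepsilon\in[0,1]$.

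\emph{Lower bound.} Since $D^\varepsilon$ is independent of $x$,
\[
\mathcal{L}^\varepsilon\psi=-D^\varepsilon\partial_x^2\psi-\partial_z(D^\varepsilon\partial_z\psi).
\]
For $\psi\in\mathcal{W}^\varepsilon$ both terms lie in $L^2$. Hence
\[
\|\mathcal{L}^\varepsilon\psi\|_{L^2}\le \max_i D_i\,\|\partial_x\psi\|_{H^1}+\|D^\varepsilon\partial_z\psi\|_{H^1}\le C\|\psi\|_{\mathcal{W}^\varepsilon},
\]
which gives $C_l$.

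\emph{Upper bound.} Put $f:=\mathcal{L}^\varepsilon\psi$. I would proceed in three steps.
\begin{enumerate}[(i)]
\item Test with $\psi$. Because $D^\varepsilon\partial_z\psi\in H^1(\Omega)$, the flux is continuous across the interfaces, so integration by parts produces no interface terms and $\psi|_{z=\pm H}=0$ kills the boundary terms. This gives $\min_iD_i\|\nabla\psi\|_{L^2}^2\le\|f\|_{L^2}\|\psi\|_{L^2}$, and the Poincar\'e inequality yields $\|\psi\|_{H^1}\le C\|f\|_{L^2}$.
\item Test with $-\partial_x^2\psi$. Using horizontal periodicity and the fact that $D^\varepsilon$ does not depend on $x$, we get $(f,-\partial_x^2\psi)=\|\sqrt{D^\varepsilon}\nabla\partial_x\psi\|_{L^2}^2$. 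Together with $\|\partial_x^2\psi\|_{L^2}\le\|\nabla\partial_x\psi\|_{L^2}$ this gives $\|\nabla\partial_x\psi\|_{L^2}\le C\|f\|_{L^2}$.
\item Recover the flux. From $\partial_z(D^\varepsilon\partial_z\psi)=-f-D^\varepsilon\partial_x^2\psi$ and $\partial_x(D^\varepsilon\partial_z\psi)=D^\varepsilon\partial_z\partial_x\psi$, together with the $L^2$ bound on $D^\varepsilon\partial_z\psi$ from (i), we obtain $\|D^\varepsilon\partial_z\psi\|_{H^1}\le C\|f\|_{L^2}$.
\end{enumerate}
Summing (i)--(iii) gives $\|\psi\|_{\mathcal{W}^\varepsilon}\le C_u\|f\|_{L^2}$.

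The main obstacle is justifying the integration by parts in step (ii) when $D^\varepsilon$ jumps. Only $\partial_x\psi\in H^1$ is available, not $\psi\in H^2$. I would handle this through the Fourier expansion in $x$, $\psi=\sum_n\psi_n(z)e^{\ii\frac{2\pi}{L}nx}$. Each mode satisfies the one-dimensional problem
\[
-(D^\varepsilon\psi_n')'+\beta^2D^\varepsilon\psi_n=f_n,\qquad \psi_n(\pm H)=0,
\]
with $D^\varepsilon\psi_n'\in H^1(-H,H)$. Testing this by $\beta^2\bar\psi_n$ gives the modewise identity
\[
\beta^2\int D^\varepsilon\bigl(|\psi_n'|^2+\beta^2|\psi_n|^2\bigr)\,\dd z=\beta^2\int f_n\bar\psi_n\,\dd z.
\]
Summing over $n$ with Parseval yields exactly the identity used in (ii). An equivalent route is to apply horizontal difference quotients, which commute with $\mathcal{L}^\varepsilon$.
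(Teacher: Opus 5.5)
Your proof is correct and complete. The lower bound follows at once from $\mathcal{L}^\varepsilon\psi=-D^\varepsilon\partial_x^2\psi-\partial_z(D^\varepsilon\partial_z\psi)$. For the upper bound you use three steps: the energy identity, tangential regularity through the modewise identity, and then reading off $\partial_z(D^\varepsilon\partial_z\psi)$ from the equation. This is the standard route for transmission problems whose coefficients depend only on the normal variable.

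You correctly isolated the one genuine technical point. The function $-\partial_x^2\psi$ lies only in $L^2$ and is not an admissible $H_0^1$ test function. The Fourier-mode identity, or horizontal difference quotients, settles this.

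A small simplification for step (i): you do not need the remark about flux continuity across the interfaces. The identity $(\mathcal{L}^\varepsilon\psi,\psi)=\|\sqrt{D^\varepsilon}\nabla\psi\|_{L^2}^2$ follows directly from three facts: the distributional definition of $\mathcal{L}^\varepsilon\psi$, $D^\varepsilon\nabla\psi\in L^2$, and the density of $C_c^\infty(\Omega)$ in $H_0^1(\Omega)$.

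The paper gives no proof of this lemma; it simply refers to \cite{CNW2025}. The citation is shorter. Your self-contained argument, however, makes explicit that $C_l$ and $C_u$ depend only on $\min_i D_i$, $\max_i D_i$ and the Poincar\'e constant of $(-H,H)$. They never depend on where the interfaces sit or on $h_0$. This uniformity in $\varepsilon\in[0,1]$ is exactly what the paper uses when it invokes the lemma in the uniform estimates of Section 4 and in the convergence proof. It cannot be read off from a reference to the regularity theory for a given layered configuration.
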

One may refer to \cite{CNW2025} for the detailed proof.

The following lemma is a uniform Ladyzhenskaya inequality on a thin strip.
\begin{lemma}
\label{lem:A2}
Let
\[
E_h:=\mathbb{T}_L\times\left(-\frac{h}{2},\frac{h}{2}\right),
\qquad 0<h\leq 1.
\]
Then the following assertions hold.
\begin{enumerate}
  \item For every $\psi\in H^1(E_h)$, one has 
  \begin{equation} \label{A2-1}
     \|\psi\|_{L^4(E_h)} \leq  Ch^{-\frac14}\left( \|\nabla\psi\|_{L^2(E_h)}^{\frac12} \|\psi\|_{L^2(E_h)}^{\frac12}+  \|\psi\|_{L^2(E_h)} \right).
  \end{equation}
  \item Suppose that $\psi\in H^1(E_h)$ and $\psi\big|_{z=\frac{h}{2}}=0 
$. Then one has 
\begin{equation}\label{eq:uniform-thin-ladyzhenskaya}
\|\psi\|_{L^4(E_h)} \leq C
\|\psi\|_{L^2(E_h)}^\frac12
\|\nabla \psi\|_{L^2(E_h)}^\frac12,
\end{equation}

\end{enumerate}
Here the constant $C>0$ depends only on $L$ and is independent
of $h$.
\end{lemma}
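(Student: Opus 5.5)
The plan is to reduce both assertions to one-dimensional inequalities in $z$ and then combine them with the one-dimensional Agmon/Ladyzhenskaya inequality in $x$ on $\mathbb{T}_L$. The model is the classical proof of Ladyzhenskaya's inequality,
\[
\int_{E_h}|\psi|^4\dd\Bx\le \Big(\int_{-h/2}^{h/2}\sup_x|\psi|^2\dd z\Big)^{\frac12}\cdot\Big(\int_{\mathbb{T}_L}\sup_z|\psi|^2\dd x\Big)^{\frac12}\cdot(\text{const}),
\]
more precisely $\|\psi\|_{L^4}^4\le \int_{\mathbb T_L}\sup_z|\psi|^2\dd x\cdot\int_{-h/2}^{h/2}\sup_x|\psi|^2\dd z$. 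Each factor will be bounded by a one-dimensional sup estimate, and the whole point is to track how each constant depends on $h$. By density it suffices to work with smooth $\psi$.

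\textbf{Horizontal direction ($h$-independent).} For fixed $z$, the one-dimensional estimate on the torus of fixed length $L$ gives
\[
\sup_x|\psi(x,z)|^2\le \tfrac1L\|\psi(\cdot,z)\|_{L^2(\mathbb T_L)}^2+2\|\psi(\cdot,z)\|_{L^2(\mathbb T_L)}\|\partial_x\psi(\cdot,z)\|_{L^2(\mathbb T_L)}.
\]
Integrating in $z$ and applying Cauchy--Schwarz yields
\[
\int\sup_x|\psi|^2\dd z\le C\big(\|\psi\|_{L^2}^2+\|\psi\|_{L^2}\|\nabla\psi\|_{L^2}\big),
\]
with $C=C(L)$.

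\textbf{Vertical direction in part (2).} Here $\psi(x,h/2)=0$, so
\[
|\psi(x,z)|^2=-\int_z^{h/2}\partial_z|\psi|^2\le 2\int|\psi||\partial_z\psi|\dd z,
\]
which gives
\[
\int_{\mathbb T_L}\sup_z|\psi|^2\dd x\le 2\|\psi\|_{L^2}\|\partial_z\psi\|_{L^2}.
\]
This bound carries no $h$ and no lower-order term. The horizontal factor still contains the term $\|\psi\|_{L^2}^2$, and I will remove it with the thin-strip Poincar\'e inequality available from the vanishing trace: $\|\psi\|_{L^2(E_h)}\le h\|\partial_z\psi\|_{L^2(E_h)}\le\|\nabla\psi\|_{L^2}$ since $h\le1$. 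Thus $\|\psi\|^2_{L^2}\le \|\psi\|_{L^2}\|\nabla\psi\|_{L^2}$, so both factors are bounded by $C\|\psi\|_{L^2}\|\nabla\psi\|_{L^2}$. Taking fourth roots gives \eqref{eq:uniform-thin-ladyzhenskaya} with $C=C(L)$.

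\textbf{Vertical direction in part (1).} With no trace condition, I will use the averaged estimate
\[
|\psi(x,z)|^2\le \tfrac1h\int|\psi(x,s)|^2\dd s+2\int|\psi||\partial_z\psi|\dd s.
\]
This gives
\[
\int\sup_z|\psi|^2\dd x\le h^{-1}\|\psi\|^2_{L^2}+2\|\psi\|_{L^2}\|\nabla\psi\|_{L^2}\le 2h^{-1}\big(\|\psi\|^2_{L^2}+\|\psi\|_{L^2}\|\nabla\psi\|_{L^2}\big),
\]
using $h\le1$. Multiplying by the horizontal bound, the product is at most $Ch^{-1}(\|\psi\|_{L^2}^2+\|\psi\|_{L^2}\|\nabla\psi\|_{L^2})^2$. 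Taking fourth roots and using $(a+b)^{1/2}\le a^{1/2}+b^{1/2}$ yields \eqref{A2-1}.

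\textbf{Main obstacle.} The only delicate point is keeping every constant $h$-independent in part (2). In particular, the lower-order term $\|\psi\|_{L^2}^2$ coming from the zero Fourier mode in $x$ must be absorbed through the Dirichlet trace at $z=h/2$ via the Poincar\'e inequality. I will check that this absorption uses only $h\le 1$, so that no factor of $h^{-1}$ is ever introduced.
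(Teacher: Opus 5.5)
Your proof is correct, and it takes a different route from the paper's.

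The paper handles part (2) in three moves. It reflects $\psi$ evenly across $z=-h/2$, so the reflected function vanishes at both $z=-3h/2$ and $z=h/2$, and extends it by zero to $\mathbb{T}_L\times\mathbb{R}$. On that $h$-independent domain it applies the standard Gagliardo--Nirenberg inequality $\|f\|_{L^4}^2\le C\|f\|_{L^2}(\|\nabla f\|_{L^2}+\|f\|_{L^2})$. It then absorbs the lower-order term with the thin-strip Poincar\'e inequality $\|f\|_{L^2}\le Ch\|\partial_z f\|_{L^2}$, using $h\le 1$. Part (1) is only asserted to follow from Gagliardo--Nirenberg on a fixed domain plus scaling.

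You instead prove both parts from scratch with the product-of-suprema argument. The vertical factor $\int_{\mathbb{T}_L}\sup_z|\psi|^2\dd x$ is controlled directly by the one-sided trace condition, so no reflection is needed and no power of $h$ appears. The horizontal factor lives on the fixed torus. The zero-mode term $\|\psi\|_{L^2}^2$ is absorbed by the same Poincar\'e mechanism as in the paper, and that absorption is the one essential idea the two proofs share.

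Your route is self-contained and gives explicit constants depending only on $L$. It also makes the origin of the factor $h^{-1/4}$ in \eqref{A2-1} transparent: it comes solely from the vertical average $h^{-1}\int|\psi|^2\dd s$. The paper compresses this into a one-line scaling remark, which does work but requires noting that only $z$ is rescaled and using $h\le 1$ to compare the anisotropically rescaled gradients. The paper's route is shorter, and its reflection trick transfers any inequality on $\mathbb{T}_L\times\mathbb{R}$ to thin strips with a Dirichlet condition on one side.

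Two small points:
\begin{enumerate}
\item Your first displayed inequality, with square roots, is misstated. The bound you actually use is the ``more precisely'' version, $\|\psi\|_{L^4}^4\le\int\sup_z|\psi|^2\dd x\cdot\int\sup_x|\psi|^2\dd z$.
\item In part (2), the density step should use approximants that preserve the vanishing trace at $z=h/2$. Alternatively, work with the absolutely continuous representative on almost every vertical line.
\end{enumerate}
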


\begin{proof}
  The estimate \eqref{A2-1} follows from the standard Gagliardo-Nirenberg inequality on a bounded domain and a scaling argument. To prove \eqref{eq:uniform-thin-ladyzhenskaya}, we define
\[
\overline \psi(x,z)
:=\left\{
\begin{aligned}
&\psi(x,z), && -\frac{h}{2}<z<\frac{h}{2},\\
&\psi(x,-h-z), && -\frac{3h}{2}<z<-\frac{h}{2}.
\end{aligned}\right.
\]
Since $\psi|_{z=\frac{h}{2}}=0$, the reflected function $\overline{\psi}$ satisfies
\[
\overline \psi\big|_{z=-\frac{3h}{2}}
=
\overline \psi\big|_{z=\frac{h}{2}}
=0.
\]
We may therefore extend $\overline \psi$ by zero outside $\mathbb{T}_L\times(-\frac{3h}{2},\frac{h}{2})$ and regard it as an element of $H^1(\mathbb{T}_L\times\mathbb{R})$. The standard Gagliardo--Nirenberg inequality on
$\mathbb{T}_L\times\mathbb{R}$ gives
\[
\|\overline \psi\|_{L^4(\mathbb{T}_L\times\mathbb{R})}^2
\leq
C
\|\overline \psi\|_{L^2(\mathbb{T}_L\times\mathbb{R})}
\left(
\|\nabla\overline \psi\|_{L^2(\mathbb{T}_L\times\mathbb{R})}
+
\|\overline \psi\|_{L^2(\mathbb{T}_L\times\mathbb{R})}
\right).
\]
On the other hand, Poincar\'e inequality yields
\[
\|\overline \psi\|_{L^2(\mathbb{T}_L\times\mathbb{R})}
\leq
Ch
\|\partial_z\overline \psi\|_{L^2(\mathbb{T}_L\times\mathbb{R})}.
\]
Since $h\leq1$, we obtain
\[
\|\overline \psi\|_{L^4(\mathbb{T}_L\times\mathbb{R})}^2
\leq
C
\|\overline \psi\|_{L^2(\mathbb{T}_L\times\mathbb{R})}
\|\nabla\overline \psi \|_{L^2(\mathbb{T}_L\times\mathbb{R})}.
\]
Finally, since $\overline{\psi}$ is symmetric with respect to $\{z=-\frac{h}{2}\}$, one has
\[
\|\overline \psi\|_{L^2(\mathbb{T}_L\times\mathbb{R})}^2= 2\|\psi\|_{L^2(E_h)}^2,\quad \|\overline \psi\|_{L^4(\mathbb{T}_L\times\mathbb{R})}^4= 2\|\psi\|_{L^4(E_h)}^4 
\]
and
\[ 
\|\nabla\overline \psi\|_{L^2(\mathbb{T}_L\times\mathbb{R})}^2= 2\|\nabla \psi\|_{L^2(E_h)}^2.
\]
This proves \eqref{eq:uniform-thin-ladyzhenskaya}.
\end{proof}

The following lemma is frequently used to estimate the decay rate of $L^r$ norms on the thin layer $\Omega_{0,\varepsilon}\cap \Omega_{\pm}$.
% \begin{lemma}\label{lem:thin-layer-L2}
% Suppose that $f\in H^1(\Omega)$. Then it holds that 
% \begin{equation}\label{eq:thin-layer-estimate-H1}
% \|f\|_{L^2(\Omega_{0,\varepsilon})}
% \le Ch_0^\frac{1}{2}\|f\|_{H^1(\Omega)},
% \end{equation}
% where $C>0$ is independent of $h_0$.   
% \end{lemma}

% \begin{proof}
% For any fixed $x\in(0,L)$ and any $z,t\in(-H,H)$, we write
% \[
% |f(x,z)|^2=\left|f(x,t)+\int_t^z \partial_z f(x,s)\dd s\right|^2 \leq C |f(x,t)|^2 + C\int_{-H}^H |\partial_z f(x,s)|^2\dd s .
% \]
% Integrating the equality with respect to $t\in(-H,H)$ and using Cauchy-Schwarz inequality, we obtain
% \[
% |f(x,z)|^2
% \le C\int_{-H}^H |f(x,t)|^2\dd t
% + C\int_{-H}^H |\partial_z f(x,s)|^2\dd s.
% \]
% Integrating the inequality over $z\in(-\frac{h_0}{2},\frac{h_0}{2})$ and then $x\in(0,L)$ yields
% \[
% \|f\|_{L^2(\Omega_{0,\varepsilon})}^2
% \le
% Ch_0(\|f\|_{L^2(\Omega)}^2 + \|\partial_z f\|_{L^2(\Omega)}^2),
% \]
% which implies  \eqref{eq:thin-layer-estimate-H1}. 
% \end{proof}

\begin{lemma}\label{lem:thin-layer-L2}
For any $r\in [1,\infty)$, there exists a constant $C>0$ depending only on $r,L$ and $H$ such that
\begin{equation}\label{eq:thin-layer-estimate-L4}
\|f\|_{L^r(\Omega_\pm \cap \Omega_{0,\varepsilon})} \le Ch_0^\frac1r\|f\|_{H^1(\Omega_\pm)},
\qquad \text{ for any } f\in H^1(\Omega_\pm).
\end{equation}
\end{lemma}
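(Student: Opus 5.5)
The estimate is one-dimensional in $z$ with $x$ as a parameter. It comes from a pointwise trace bound on vertical segments, integrated over a strip of width $\sim h_0$. By symmetry we treat $\Omega_+\cap\Omega_{0,\varepsilon}=\mathbb{T}_L\times(0,\tfrac{h_0}{2})$; the case of $\Omega_-$ is identical. By density we may take $f$ smooth on $\overline{\Omega_+}$.

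\textbf{Pointwise bound.} For fixed $x\in\mathbb{T}_L$ and any $z\in(0,\tfrac{h_0}{2})$, $s\in(0,H)$, write $f(x,z)=f(x,s)-\int_z^s\partial_zf(x,\tau)\dd\tau$. Averaging in $s$ over $(0,H)$ and applying Cauchy--Schwarz gives
\[
\sup_{z\in(0,H)}|f(x,z)|\le H^{-\frac12}\|f(x,\cdot)\|_{L^2(0,H)}+H^{\frac12}\|\partial_zf(x,\cdot)\|_{L^2(0,H)}=:F(x).
\]
Note that $F\in L^2(\mathbb{T}_L)$ with $\|F\|_{L^2(\mathbb{T}_L)}\le C\|f\|_{H^1(\Omega_+)}$.

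\textbf{Integration over the strip.} Integrating $|f|^r$ over $z\in(0,\tfrac{h_0}{2})$ gives
\[
\|f\|_{L^r(\Omega_+\cap\Omega_{0,\varepsilon})}^r\le \frac{h_0}{2}\int_{\mathbb{T}_L}F(x)^r\dd x.
\]
For $r\le2$ we conclude directly by Hölder on the bounded torus: $\|F\|_{L^r}\le C\|F\|_{L^2}$. For $r>2$, however, $F\in L^2$ alone is insufficient. This is the main obstacle.

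\textbf{The case $r>2$.} We would use the one-dimensional Sobolev embedding in $x$ instead of the cruder $L^2$ bound. Define $G(z):=\|f(\cdot,z)\|_{L^r(\mathbb{T}_L)}$. Then $G(z)\le C\|f(\cdot,z)\|_{H^{1/2}(\mathbb{T}_L)}$, since $H^{1/2}(\mathbb{T}_L)\hookrightarrow L^r$ for every finite $r$. The trace theorem on the horizontal slab $\mathbb{T}_L\times(z,H)$, with $H-z\ge H/2$ so the constant is uniform in $z$, gives $\|f(\cdot,z)\|_{H^{1/2}(\mathbb{T}_L)}\le C\|f\|_{H^1(\Omega_+)}$ uniformly for $z\in(0,\tfrac{h_0}{2})$. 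Therefore
\[
\|f\|_{L^r(\Omega_+\cap\Omega_{0,\varepsilon})}^r=\int_0^{h_0/2}G(z)^r\dd z\le C h_0\|f\|_{H^1(\Omega_+)}^r,
\]
and taking $r$-th roots yields $Ch_0^{1/r}\|f\|_{H^1(\Omega_+)}$.

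This trace-plus-embedding route actually handles all $r\in[1,\infty)$ at once, so the pointwise bound is only needed as motivation. The constants depend only on $r$, $L$, $H$ (via $h_0\le H/2$ and uniform trace constants), as claimed.
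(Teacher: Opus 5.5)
Your trace-plus-embedding argument is correct and is exactly the paper's proof. It bounds $\|f(\cdot,z)\|_{L^r(\mathbb{T}_L)}$ by $\|f(\cdot,z)\|_{H^{1/2}(\mathbb{T}_L)}$, controls that uniformly in $z$ by $\|f\|_{H^1(\Omega_\pm)}$ via the trace theorem, and integrates over $z\in(0,\tfrac{h_0}{2})$; your observation that the slab $\mathbb{T}_L\times(z,H)$ has height at least $H/2$ makes explicit the uniformity of the trace constant that the paper leaves implicit, and your preliminary pointwise bound (valid for $r\le 2$) is correct but unnecessary.
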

\begin{proof}
Suppose that $f\in H^1(\Omega_+)$. 
%Extend $f$ by a bounded extension operator to a fixed strip containing $\Omega_+$; applying the trace theorem on translated horizontal slices gives a constant independent of the slice location.  Together with 
By virtue of the one-dimensional Sobolev embedding $H^{\frac12}(\mathbb{T}_L)\hookrightarrow L^r(\mathbb{T}_L)$ and the trace theorem, it holds that
\[
\|f(\cdot,t)\|_{L^r(\mathbb{T}_L)}  \le C \|f(\cdot,t)\|_{H^\frac12(\mathbb{T}_L)}\leq C \|f\|_{H^1(\Omega_+)},\quad \text{ for any }t\in (0,H).
\] 
Finally, integrate over the inequality over $t\in(0,\tfrac{h_0}{2})$:
\begin{align*}
\|f\|_{L^r(\Omega_+ \cap \Omega_{0,\varepsilon})}^r
= \int_0^{\tfrac{h_0}{2}} \|f(\cdot,t)\|_{L^r(\mathbb{T}_L)}^r\dd t \leq  Ch_0 \|f\|_{H^1(\Omega_+)}^r.
\end{align*}
Taking the $r$-th root completes the proof.
\end{proof}

Finally, we present a generalized Poincar\'e inequality for functions defined on two disjoint domains with a common interface. 
\begin{lemma} 
\label{lem:generalized-Poincare}
%Let $\Omega_\pm$ be two disjoint open subsets of $\Omega$ with a common interface $\Gamma:=\overline{\Omega_+}\cap \overline{\Omega_-}$. 
For any $p=(p_+,p_-)\in  H^1(\Omega_+)\times H^1(\Omega_-)$ satisfying
\[\int_{\Omega_+} p_+ \dd \Bx + \int_{\Omega_-} p_- \dd \Bx = 0,\]
there exists a constant $C>0$, depending only on
$\Omega_{+}$ and $\Omega_{-}$, such that
\begin{equation}\nonumber
 \|p_{+}\|_{L^{2}(\Omega_{+})} 
+
\|p_{-}\|_{L^{2}(\Omega_{-})} 
\leq
C
\left(
\|\nabla p_{+}\|_{L^{2}(\Omega_{+})}
+
\|\nabla p_{-}\|_{L^{2}(\Omega_{-})}
+
\left|\int_{\Gamma}\llbracket p\rrbracket_{\Gamma}\dd \sigma\right|
\right) 
\end{equation}
 where
\[
\llbracket p\rrbracket_{\Gamma}
:=
p_{+}|_{\Gamma}-p_{-}|_{\Gamma}.
\] 
\end{lemma}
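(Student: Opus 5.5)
The plan is to split $p$ into its subdomain means and their fluctuations, so that the Neumann Poincar\'e inequality on each outer layer handles the fluctuations and the mean-zero condition together with the mean jump $\int_\Gamma\llbracket p\rrbracket_\Gamma\dd\sigma$ handles the means. Set $m_\pm:=\frac{1}{|\Omega_\pm|}\int_{\Omega_\pm}p_\pm\dd\Bx$. The Poincar\'e--Wirtinger inequality on each fixed rectangle $\Omega_\pm=\mathbb{T}_L\times(0,\pm H)$ gives
\[
\|p_\pm-m_\pm\|_{L^2(\Omega_\pm)}\le C\|\nabla p_\pm\|_{L^2(\Omega_\pm)}.
\]
It therefore suffices to bound $|m_+|+|m_-|$ by the right-hand side, since $\|p_\pm\|_{L^2(\Omega_\pm)}\le \|p_\pm-m_\pm\|_{L^2(\Omega_\pm)}+|m_\pm||\Omega_\pm|^{1/2}$.

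Two linear relations fix the means. The normalization gives $|\Omega_+|m_++|\Omega_-|m_-=0$. The jump gives
\[
\int_\Gamma\llbracket p\rrbracket_\Gamma\dd\sigma=L(m_+-m_-)+\int_\Gamma\big[(p_+-m_+)-(p_--m_-)\big]\dd\sigma .
\]
By the trace theorem on $\Omega_\pm$ and the Poincar\'e step,
\[
\Big|\int_\Gamma(p_\pm-m_\pm)\dd\sigma\Big|\le L^{1/2}\|p_\pm-m_\pm\|_{L^2(\Gamma)}\le C\|p_\pm-m_\pm\|_{H^1(\Omega_\pm)}\le C\|\nabla p_\pm\|_{L^2(\Omega_\pm)}.
\]
Hence
\[
|m_+-m_-|\le C\Big(\|\nabla p_+\|+\|\nabla p_-\|+\Big|\int_\Gamma\llbracket p\rrbracket_\Gamma\dd\sigma\Big|\Big).
\]

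The $2\times2$ system in $(m_+,m_-)$ with coefficient rows $(|\Omega_+|,|\Omega_-|)$ and $(1,-1)$ has determinant $-(|\Omega_+|+|\Omega_-|)\neq0$. Explicitly, $m_+=\frac{|\Omega_-|}{|\Omega|}(m_+-m_-)$ and $m_-=-\frac{|\Omega_+|}{|\Omega|}(m_+-m_-)$. So both means are controlled by $|m_+-m_-|$, and combining with the first paragraph proves the claim with $C$ depending only on $\Omega_\pm$.

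There is no genuine obstacle here. The only point needing care is that the trace and Poincar\'e constants are those of the fixed domains $\Omega_\pm$ rather than $\varepsilon$-dependent ones, which holds because the lemma is stated on the limit configuration.
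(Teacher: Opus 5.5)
Your proposal is correct and follows essentially the same route as the paper: subtract the subdomain means, control the fluctuations by Poincar\'e--Wirtinger and the trace theorem, recover $m_+-m_-$ from the jump identity integrated over $\Gamma$, and then solve for the two means using the global mean-zero condition. Your explicit formulas $m_+=\frac{|\Omega_-|}{|\Omega|}(m_+-m_-)$ and $m_-=-\frac{|\Omega_+|}{|\Omega|}(m_+-m_-)$ are exactly the ones the paper uses.
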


\begin{proof}
For $i=+,-$, set 
\[
\bar p_i
:=
\frac{1}{|\Omega_{i}|}
\int_{\Omega_{i}}p_{i}\dd \Bx
\quad  \text{ and }\quad 
\tilde p_{i}:=p_{i}-\bar p_i.
\]
Then 
\[
\int_{\Omega_{i}}\tilde p_{i}\dd \Bx=0.
\]
By the usual Poincar\'e inequality, one has 
\begin{equation}
\|\tilde p_{i}\|_{H^{1}(\Omega_{i})}
\leq
C\|\nabla p_{i}\|_{L^{2}(\Omega_{i})}.
\label{eq:Poincare-zero-mean-parts}
\end{equation}

On the interface $\Gamma$, we have
\[
\llbracket p\rrbracket_{\Gamma}
=
\bar p_+-\bar p_-
+
\tilde p_{+}|_{\Gamma}
-
\tilde p_{-}|_{\Gamma}.
\]
Integrating this identity over $\Gamma$ yields
\[
|\Gamma|(\bar p_+-\bar p_-)
=
\int_{\Gamma}\llbracket p\rrbracket_{\Gamma}\dd \sigma
-
\int_{\Gamma}
\left(
\tilde p_{+}-\tilde p_{-}
\right)\dd \sigma.
\]
Therefore, by the Cauchy--Schwarz inequality, the trace theorem, and \eqref{eq:Poincare-zero-mean-parts}, one has 
\begin{equation}\label{eq:difference-of-means}
\begin{aligned}
|\bar p_+-\bar p_-| \leq& C\left|\int_{\Gamma}\llbracket p\rrbracket_{\Gamma}\dd \sigma\right|+ 
C \left( \|\tilde p_{+}\|_{L^2(\Gamma)} + \|\tilde p_{-}\|_{L^2(\Gamma)}
\right) \\
\leq& C\left|\int_{\Gamma}\llbracket p\rrbracket_{\Gamma}\dd \sigma\right|+ 
C \left( \|\nabla p_{+}\|_{L^{2}(\Omega_{+})} + \|\nabla p_{-}\|_{L^{2}(\Omega_{-})}
\right).
\end{aligned}
\end{equation}

On the other hand, the assumption
\[
\int_{\Omega_+} p_+ \dd \Bx + \int_{\Omega_-} p_- \dd \Bx = 0,
\]
implies that
\[
|\Omega_{+}| \bar p_+
+
|\Omega_{-}| \bar p_-
=0.
\]
Then we have
\[
\bar p_+ = \frac{|\Omega_{-}|}
{|\Omega_{+}|+|\Omega_{-}|}
(\bar p_+-\bar p_-)
\]
and
\[
\bar p_- = -\frac{|\Omega_{+}|}{|\Omega_{+}|+|\Omega_{-}|}(\bar p_+-\bar p_-  ).
\]
It follows from \eqref{eq:difference-of-means} that
\begin{align}
|\bar p_+|+|\bar p_-|
\leq
C
\left( \left|\int_{\Gamma}\llbracket p\rrbracket_{\Gamma}\dd \sigma\right|
+
\|\nabla p_{+}\|_{L^{2}(\Omega_{+})} + 
\|\nabla p_{-}\|_{L^{2}(\Omega_{-})}
\right).
\label{eq:estimates-of-means}
\end{align}
Finally, combining with \eqref{eq:Poincare-zero-mean-parts} and
\eqref{eq:estimates-of-means}, we obtain
\begin{equation}\nonumber
\begin{aligned}
 \|p_{+}\|_{L^{2}(\Omega_{+})}
+
\|p_{-}\|_{L^{2}(\Omega_{-})} \leq&  \|\tilde{p}_{+}\|_{L^{2}(\Omega_{+})}
+
\|\tilde{p}_{-}\|_{L^{2}(\Omega_{-})}+ \|\bar{p}_{+}\|_{L^{2}(\Omega_{+})}
+
\|\bar{p}_{-}\|_{L^{2}(\Omega_{-})}\\
\leq&~
\|\tilde p_{+}\|_{L^{2}(\Omega_{+})}
+
\|\tilde p_{-}\|_{L^{2}(\Omega_{-})}
+
|\Omega_{+}|^{\frac12}|\bar p_{+}|
+
|\Omega_{-}|^{\frac12}|\bar p_{-}|
\\
 \leq&~
C
\left(
\|\nabla p_{+}\|_{L^{2}(\Omega_{+})}
+
\|\nabla p_{-}\|_{L^{2}(\Omega_{-})}
+\left|\int_{\Gamma}\llbracket p\rrbracket_{\Gamma}\dd \sigma\right|
\right).
\end{aligned}
\end{equation}
Then we finish the proof.
\end{proof}

\input{references_m3as.tex}

\end{document}

%% file: references_m3as.tex
% M3AS-style references: numbered, alphabetized, full article titles, abbreviated journal names.

%% file: Effective_IBC_DB_261029.bbl
\begin{thebibliography}{99}
\bibitem{Allaire1989Stokes}
G. Allaire, Homogenization of the {Stokes} flow in a connected porous medium, \textit{Asymptot. Anal.} 2(3) (1989) 203--222.

\bibitem{AllairePankratovaPiatnitski2012}
G. Allaire, I. Pankratova, and A. Piatnitski, Homogenization of a nonstationary convection--diffusion equation in a thin rod and in a layer, \textit{SeMA} 58 (2012) 53--95.

\bibitem{AmazianePankratovPiatnitski2007}
B. Amaziane, L. Pankratov, and A. Piatnitski, Homogenization of a single phase flow through a porous medium in a thin layer, \textit{Math. Models Methods Appl. Sci.} 17(9) (2007) 1317--1349.

\bibitem{LiZhang2011}
J. Li  and K. Zhang, Reinforcement of the Poisson equation by a thin layer,  \textit{Math. Models Methods Appl. Sci.} 21(05) (2011), 1153--1192.
 

\bibitem{AngotGoyeauOchoa2017}
P. Angot, B. Goyeau, and J. A. Ochoa-Tapia, Asymptotic modeling of transport phenomena at the interface between a fluid and a porous layer: Jump conditions, \textit{Phys. Rev. E} 95(6) (2017) 063302.

\bibitem{AuvrayVial2018}
A. Auvray and G. Vial, Asymptotic expansions and effective boundary conditions: a short review for smooth and nonsmooth geometries with thin layers,  \textit{ESAIM Proc. Surv. 61 (2018) 38--54}.

\bibitem{bickle2007modelling}
M. J. Bickle, A. Chadwick, H. E. Huppert, M. A. Hallworth, and S. Lyle, Modelling carbon dioxide accumulation at Sleipner: Implications for underground carbon storage, \textit{Earth Planet. Sci. Lett.} 255(1-2) (2007) 164--176.

\bibitem{CNW2025}
Y. Cao, W. Niu, and X. Wang, Well-posedness and regularity of the Darcy--Boussinesq system in layered porous media, \textit{Discrete Contin. Dyn. Syst. Ser. B} 30(12) (2025) 4764--4783.

\bibitem{CardoneJagerWoukeng2024}
G. Cardone, W. J\"ager, and J. L. Woukeng, Derivation and analysis of a nonlocal {Hele--Shaw--Cahn--Hilliard} system for flow in thin heterogeneous layers, \textit{Math. Models Methods Appl. Sci.} 34(7) (2024) 1343--1400.

\bibitem{ChenPondWang2012}
X. Chen, C. Pond, and X. Wang, Effective boundary conditions resulting from anisotropic and optimally aligned coatings: the two dimensional case, \textit{Arch. Ration. Mech. Anal.} 206(3) (2012) 911--951.

\bibitem{doering1998jfm}
C.~R. Doering and P. Constantin, Bounds for heat transport in a porous layer, \textit{J. Fluid Mech.} 376 (1998) 263--296.

\bibitem{DongLiYang2025}
H. Dong, Y. Y. Li, and Z. Yang, Optimal gradient estimates of solutions to the insulated conductivity problem in dimension greater than two, \textit{J. Eur. Math. Soc.} 27(8) (2025) 3275--3296.

\bibitem{DW2025}
H. Dong and X. Wang, Sharp material interface limit of the Darcy--Boussinesq system, \textit{SIAM J. Appl. Math.} 85(4) (2025) 1621--1642.

\bibitem{fabrie1986aam}
P. Fabrie, Solutions fortes et comportement asymptotique pour un mod\`ele de convection naturelle en milieu poreux, \textit{Acta Appl. Math.} 7(1) (1986) 49--77.

\bibitem{fabrie1996aa}
P. Fabrie and B. Nicolaenko, Exponential attractors for non-dissipative systems modeling convection in porous media, \textit{Asymptot. Anal.} 12(4) (1996) 295--327.

\bibitem{GahnNeussRadu2025}
M. Gahn and M. Neuss-Radu, Effective interface laws for fluid flow and solute transport through thin reactive porous layers, \textit{J. Evol. Equ.} 25(2) (2025) 33.

\bibitem{hewitt2014jfm}
D.~R. Hewitt, J.~A. Neufeld, and J.~R. Lister, High Rayleigh number convection in a porous medium containing a thin low-permeability layer, \textit{J. Fluid Mech.} 756 (2014) 844--869.

\bibitem{hewitt2020jfm}
D.~R. Hewitt, G.~G. Peng, and J.~R. Lister, Buoyancy-driven plumes in a layered porous medium, \textit{J. Fluid Mech.} 883 (2020) A37.

\bibitem{hewitt2022jfm}
D.~R. Hewitt, Evolution of convection in a layered porous medium, \textit{J. Fluid Mech.} 941 (2022) A56.

\bibitem{huppert2014ar}
H.~E. Huppert and J.~A. Neufeld, The fluid mechanics of carbon dioxide sequestration, \textit{Annu. Rev. Fluid Mech.} 46 (2014) 255--272.

\bibitem{ly1999jns}
H. V. Ly and E. S. Titi, Global Gevrey regularity for the B\'enard convection in a porous medium with zero Darcy--Prandtl number, \textit{J. Nonlinear Sci.} 9 (1999) 333--362.

\bibitem{MarusicPazin2026}
E. Maru\v{s}i\'{c}-Paloka and I. Pa\v{z}anin, Fluid flow through a porous membrane and the {Darcy} interface condition, \textit{Multiscale Model. Simul.} 24(2) (2026) 798--819.

\bibitem{MikelicWheeler2012}
A. Mikeli\'{c} and M.~F. Wheeler, On the interface law between a deformable porous medium containing a viscous fluid and an elastic body, \textit{Math. Models Methods Appl. Sci.} 22(11) (2012) 1250031.

\bibitem{nield2017convection}
D.~A. Nield and A. Bejan, \textit{Convection in Porous Media}, 5th ed. (Springer, Cham, 2017).

\bibitem{oliver2000gevrey}
M. Oliver and E.~S. Titi, Gevrey regularity for the attractor of a partially dissipative model of B\'enard convection in a porous medium, \textit{J. Differ. Equ.} 163(2) (2000) 292--311.

\bibitem{Roubicek2017Geophysical}
T. Roub\'{i}\v{c}ek, Geophysical models of heat and fluid flow in damageable poro-elastic continua, \textit{Continuum Mech. Thermodyn.} 29(2) (2017) 625--646.

\bibitem{ShaWangPermeability2026}
K. Sha and X. Wang, Vanishing permeability limit of convection in multilayer porous media, \textit{Discrete Contin. Dyn. Syst.} 55 (2026) 421--441.

\bibitem{ShaWangThickness2026}
K. Sha and X. Wang, Vanishing layer thickness limit of convection in multilayer porous media, \textit{Discrete Contin. Dyn. Syst. Ser. S} 26 (2026) 461--476.

\bibitem{ShaWangWu2026}
K. Sha, X. Wang, and H. Wu, Asymptotic long-time behavior of {Darcy--Boussinesq} convection in layered porous media with narrow transition zones, preprint (2026), arXiv:2603.01362.

\bibitem{Temam1997}
R. Temam, \textit{Infinite-Dimensional Dynamical Systems in Mechanics and Physics}, 2nd ed. (Springer-Verlag, New York, 1997).

\end{thebibliography}
